 \newcommand{\edge}{\ar@{-}}
\numberwithin{equation}{section}
\theoremstyle{plain} \newtheorem{theorem}{Theorem}[section] \newtheorem{lemma}[theorem]{Lemma}
\newtheorem{proposition}[theorem]{Proposition} \newtheorem{corollary}[theorem]{Corollary}
\theoremstyle{definition} \newtheorem{definition}[theorem]{Definition} \newtheorem{example}[theorem]{Example}
\newtheorem{remark}[theorem]{Remark} \newtheorem{remarks}[theorem]{Remarks} \newtheorem*{remark*}{Remark} \newtheorem{question}[theorem]{Question}
\newcommand{\gr}{\operatorname{gr}}
\newcommand{\GK}{\operatorname{GKdim}}
\newcommand{\gnoc}{\mathrel{{\lower.2ex\hbox{$\backsim$}}\llap{\raise.45ex\hbox{=}}}}
\begin{document}

\title[] {Quantum homogeneous spaces of connected Hopf algebras}

\author[Ken Brown and Paul Gilmartin]{Ken Brown and Paul Gilmartin} \address{School of Mathematics and Statistics\\ University of Glasgow\\ Glasgow G12 8QW\\
Scotland.} \email{Ken.Brown@glasgow.ac.uk; p.gilmartin.1@research.gla.ac.uk}

\subjclass[2010]{Primary 16T05, 16T15; Secondary 17B37,20G42}

\thanks{Some of these results will form part of the second author's PhD thesis at the University of Glasgow, supported by a Scholarship of the Carnegie Trust. Both authors thanks James Zhang for very helpful discussions.}


\maketitle

\begin{abstract} Let $H$ be a connected Hopf $k$-algebra of finite Gel'fand-Kirillov dimension over an algebraically closed field $k$ of characteristic 0. The objects of study in this paper are the left or right coideal subalgebras $T$ of $H$. They are shown to be deformations of commutative polynomial $k$-algebras. A number of well-known homological and other properties follow immediately from this fact. Further properties are described, examples are considered,  invariants are constructed and a number of open questions are listed.

\end{abstract}

\section{Introduction}

\subsection{} \label{intro1} The left and right coideal subalgebras of a Hopf algebra $H$ (defined in $\S$\ref{coideal}) have been an important focus of research since the classical work on the commutative and cocommutative cases in the last century, \cite{DG}, \cite{Tak}, \cite{New}. Following the seminal papers of Takeuchi \cite{Takeuchi}, Masuoka \cite{Masuoka} and Schneider \cite{Schn} attention has concentrated on the \emph{quantum homogeneous spaces}, that is those coideal subalgebras of $H$ over which $H$ is a faithfully flat module. This paper continues this line of research for the case where $H$ is a connected Hopf $k$-algebra of finite Gel'fand-Kirillov dimension $n$, (written $\mathrm{GKdim}H = n$), with $k$ an algebraically closed field of characteristic 0.

This class of Hopf algebras has been the subject of a series of recent papers, see for example \cite{Zhuang}, \cite{OHagan}, \cite{WZZ4}. None of these, however, has examined their coideal subalgebras, so a primary aim here is to lay out their basic properties and clarify topics for future research. Motivating examples from the classical theory are plentiful and offer a rich source of intuition - thus, $H$ as above is \emph{commutative} if and only if it is the coordinate ring $\mathcal{O}(U)$ of a unipotent algebraic $k$-group $U$ of dimension $n$, while $H$ is \emph{cocommutative} if and only if it is the enveloping algebra $U(\mathfrak{g})$ of its $n$-dimensional Lie $k$-algebra $\mathfrak{g}$ of primitive elements. In the former case the coideal subalgebras of $H$ are the (right and left) homogeneous $U$-spaces; in the latter case - thanks to cocommutativity - the coideal subalgebras are the Hopf subalgebras, hence are just the enveloping algebras of the Lie subalgebras of $\mathfrak{g}$. For references for these classical facts, see $\S\ref{commutative},\,\S\ref{cocommutative}$.

\subsection{} \label{intro2} In both the above classical settings the faithful flatness condition always holds, and both the Hopf algebra $H$ and its coideal subalgebras either are themselves (in the first case), or have associated graded algebras which are (in the second case) commutative polynomial $k$-algebras. To extend this picture to the non-classical world, we need the concept of the \emph{coradical filtration} of a Hopf algebra, recalled in $\S$\ref{coradfilt}. The starting point is then the result of Zhuang, \cite[Theorem 6.9]{Zhuang}, restated below as Theorem \ref{beautiful}, stating that the associated graded algebra $\mathrm{gr}H$ of $H$ with respect to its coradical filtration is a polynomial $k$-algebra in $n = \mathrm{GKdim}H$ commuting variables. Our first main result shows that the whole of the above classical picture extends to the setting of Zhuang's theorem.

\begin{theorem} \label{firstmain}
Let $H$ be a connected Hopf algebra of finite Gel'fand-Kirillov dimension $n$ over an algebraically closed field $k$ of characteristic $0$ and let $T$ be a left or right coideal subalgebra of $H$. Then
\begin{enumerate}
\item{($\mathrm{Masuoka}$, \cite{Masuoka}) $H$ is a free right and left $T$-module.}
\item{
With respect to the coradical filtration of $T$, the associated graded algebra $\gr T$ is a polynomial $k$-algebra in $m$ variables. }
\item  $\GK{T}=m \leq n$, and $m=n$ if and only if $T = H$.
\end{enumerate}
\end{theorem}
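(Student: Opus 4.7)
The plan is to exploit the coradical filtration of $H$ throughout, with each of the three parts reducing (after passage to the associated graded, where necessary) to a statement about the classical commutative case.

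Part (1) I would quote directly from Masuoka's theorem \cite{Masuoka}, after checking its hypotheses: $H$ is pointed (being connected), and noetherian since $\gr H \cong k[x_1,\dots,x_n]$ is noetherian by Zhuang's theorem, a property that lifts through the coradical filtration.

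For (2), I would equip $T$ with the induced filtration $T_n := T \cap H_n$ and verify that this coincides with the coradical filtration of $T$. For $T$ a right coideal subalgebra, the property $\Delta(T) \subseteq T \otimes H$ combines with $\Delta(H_n) \subseteq \sum H_i \otimes H_j$ to yield $\Delta(T_n) \subseteq \sum_{i+j=n} T_i \otimes H_j$, so $\gr T$ is a graded right coideal subalgebra of the commutative polynomial Hopf algebra $\gr H$ (the left coideal case is symmetric). The key step is then to identify, via the classical correspondence for connected commutative algebraic groups in characteristic zero, $\gr H$ with the coordinate ring $\calO(G)$ of the vector group $G = \mathbb{G}_a^n$, and $\gr T$ with $\calO(K \backslash G)$ for some closed (and hence vector) subgroup $K \subseteq G$ of codimension $m$. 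Then $K \backslash G \cong \mathbb{G}_a^m$ gives $\gr T \cong k[y_1, \ldots, y_m]$.

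For (3), the equality $\GK T = \GK \gr T = m$ follows from standard filtered-to-graded arguments, since the filtration on $T$ is exhaustive with finite-dimensional pieces. The inequality $m \leq n$ is immediate from $\gr T \hookrightarrow \gr H$ (or from $\GK$-monotonicity). For the substantive implication $m = n \Rightarrow T = H$: if $m = n$ then $K$ has dimension zero, hence is trivial (a connected unipotent group in characteristic zero has no nontrivial finite subgroups, and any closed subgroup of such a $G$ is automatically connected), so $\gr T = \gr H$. An induction on $n$, using $T_{n-1} = H_{n-1}$ and the equality of graded quotients $T_n/T_{n-1} = H_n/H_{n-1}$ viewed inside $H_n/H_{n-1}$, forces $T_n = H_n$; exhaustion of the filtration then gives $T = H$.

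The main obstacle I anticipate is step (2): identifying the commutative polynomial Hopf algebra $\gr H$ with $\calO(\mathbb{G}_a^n)$ (which requires ruling out more exotic commutative group structures on $\AA^n$), and verifying that graded coideal subalgebras correspond to quotients by closed subgroups in precisely the form required. A secondary technical point is showing that the coradical filtration of $T$ actually equals $T \cap H_n$, since $T$ itself need not be a subcoalgebra.
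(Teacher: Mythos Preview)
Your overall strategy---pass to $\gr H$, identify it as the coordinate ring of an algebraic group, and read off the structure of $\gr T$ from the corresponding homogeneous space---matches the paper's. But there is a genuine error in the execution of part~(2): you assert that $\gr H \cong \calO(\mathbb{G}_a^n)$, i.e.\ that the group $G$ is the \emph{abelian} vector group. This is false in general. Commutativity of the Hopf algebra $\gr H$ tells you only that $G$ is an honest affine algebraic group; it is \emph{cocommutativity} of $\gr H$ that would force $G$ to be abelian, and the paper shows (see the corollary following Proposition~\ref{litup}) that this happens precisely when $H$ itself is cocommutative, i.e.\ $H\cong U(\mathfrak g)$. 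For a concrete counterexample, take $H=B(\lambda)$ from $\S$\ref{smallHopf}: then $\gr H$ is the coordinate ring of the $3$-dimensional Heisenberg group, not $\mathbb{G}_a^3$. So the ``more exotic commutative group structures on $\AA^n$'' that you hoped to rule out are exactly what occur, and your deduction ``$K\backslash G\cong\mathbb{G}_a^m$'' collapses.

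The fix is to work with a general unipotent $G$ (Theorem~\ref{laz} gives this identification) and invoke the structure of unipotent homogeneous spaces: for any closed subgroup $K$ of a unipotent group $G$ in characteristic~$0$, the quotient $K\backslash G$ is isomorphic \emph{as a variety} (not as a group) to affine space; this is Rosenlicht's theorem, used in the paper as Theorem~\ref{unipotent}(3). Once you have that, your argument for~(3) goes through essentially unchanged, since a zero-dimensional closed subgroup of a unipotent group in characteristic~$0$ is indeed trivial. Two minor points: Masuoka's freeness result for connected $H$ needs only $S(T_0)=T_0$, not noetherianity, so that check is superfluous; and the ``coradical filtration of $T$'' is simply \emph{defined} in the paper as the induced filtration $T_n:=T\cap H_n$ (since $T$ is not a subcoalgebra, there is no intrinsic coradical filtration to compare it with), so your secondary concern dissolves.
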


The theorem is given below as Lemma \ref{Tissiglan} and Theorem \ref{homological}. Just as with parts (1) and (2), so also (3) incorporates familiar classical phenomena: if $W \subset U$ is a strict inclusion of unipotent $k$-groups, then $\mathrm{dim}W < \mathrm{dim}U$; and a strict inclusion of Lie algebras of course implies a strict inequality of their dimensions.

A key point is that the polynomial algebra $\mathrm{gr}H$ above is itself a Hopf algebra, the comultiplication and multiplication of $H$ being ``lifts" of those of $\mathrm{gr}H$. In particular, $\mathrm{gr}H$ is \emph{connected}, and hence is the coordinate ring of a unipotent algebraic $k$-group $U$, as explained in $\S$\ref{commutative}. Moreover, $T$ is a lift of the homogeneous $U$-space with coordinate ring $\mathrm{gr}T$. The case where $H$ is cocommutative, that is, $H$ is isomorphic as a Hopf algebra to the enveloping algebra of an $n$-dimensional Lie algebra, is the case where $U$ is abelian, that is, $U \cong (k,+)^n$.

\subsection{}\label{intro3} Following \cite{LiuWu}, we call a left or right coideal subalgebra $T$ of a Hopf algebra $H$ a \emph{quantum homogeneous space} if $H$ is faithfully flat as a left \emph{and} as a right $T$-module. Thus (1) of the theorem ensures that all coideal subalgebras of $H$ as in the theorem are quantum homogeneous spaces.

As is completely standard, a filtered-graded result such as Theorem \ref{firstmain} has important homological consequences. Thus we deduce:

\begin{corollary} \label{homo}Let $H$ and $T$ be as in Theorem \ref{firstmain}.
\begin{enumerate} \item{
$T$ is a noetherian domain of global dimension $m$, AS-Regular and GK-Cohen-Macaulay .}
\item{
$T$ is twisted Calabi-Yau of dimension $m$.}
\end{enumerate}
\end{corollary}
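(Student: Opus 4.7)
The plan is to transfer homological properties from the commutative polynomial algebra $\gr T \cong k[x_1, \ldots, x_m]$ supplied by Theorem~\ref{firstmain}(2) back to $T$ via the standard filtered--graded dictionary applied to the coradical filtration $\{T_n\}$ of $T$. This filtration is exhaustive, separated, has $T_0 = k$, and each $T_n$ is finite-dimensional since $\gr T$ is a finitely generated connected graded $k$-algebra with one-dimensional zeroth piece. Hence $T$ sits squarely in the standard setup for filtered--graded transfer theorems.

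First I would use that $\gr T$ is a commutative noetherian domain and invoke the classical lifting results (Bj\"ork; see also McConnell--Robson) to conclude that $T$ itself is a noetherian domain. The inequality $\gldim T \le \gldim \gr T = m$ is then likewise standard for filtrations whose associated graded is noetherian of finite global dimension. For the reverse inequality, the trivial module $k$ has projective dimension at least $m$ over $T$, because its graded analogue has projective dimension exactly $m$ over $\gr T = k[x_1,\ldots,x_m]$ (Koszul resolution), and this lower bound is preserved under the associated-graded functor. Combined with Theorem~\ref{firstmain}(3) this yields $\gldim T = m = \GK T$. The AS--regular and GK--Cohen--Macaulay conditions for $T$ are both encoded in vanishing and dimension statements for $\calExt^*_T(k,T)$, and these reduce, via the standard spectral sequence associated to the Rees algebra of the coradical filtration, to the (elementary) corresponding computations over the polynomial ring $\gr T$.

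For part (2), recall that $T$ is twisted Calabi--Yau of dimension $m$ means that $T$ is homologically smooth as a $T$-bimodule, with $\calExt^i_{T^e}(T, T^e) = 0$ for $i \ne m$ and $\calExt^m_{T^e}(T, T^e)$ an invertible $T$-bimodule. The polynomial ring $\gr T$ is classically (untwisted) Calabi--Yau of dimension $m$. I would lift both homological smoothness and the Ext computation to $T$ by endowing the enveloping algebra $T^e$ with the tensor-product filtration and applying the filtered analogue of the rigidity of Ext, as developed in work of Yekutieli, He--Van Oystaeyen--Zhang and Liu--Wu--Zhu on filtered deformations of polynomial algebras, and then identifying the resulting Nakayama bimodule.

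The main obstacle will be the rigorous transfer of AS--regularity and of the twisted Calabi--Yau property from $\gr T$ to $T$, since both notions are traditionally stated in the connected graded setting. One has to argue carefully, either through the Rees algebra of the coradical filtration or via a direct spectral-sequence comparison, that the defining Ext conditions lift between $T$ and $\gr T$; I expect to cite the existing lifting literature rather than reproduce these technical arguments, after which both parts of the corollary follow immediately from Theorem~\ref{firstmain}.
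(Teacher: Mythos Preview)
Your approach to part~(1) is essentially the paper's: the paper proves Theorem~\ref{homological} by exactly the filtered--graded transfer you outline (noetherianity and the domain property from \cite{MR}, the global-dimension bound from \cite[Corollary 7.6.18]{MR}, the reverse inequality via $j_T(k)=m$ obtained from the GK--Cohen--Macaulay property, and the AS-regular Ext computation from Bj\"ork's subquotient lemma \cite[Proposition 6.10]{Bjork}). So there is nothing to add here.

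For part~(2), however, your route diverges from the paper's. You propose a purely filtered--graded lift of the Calabi--Yau property from the polynomial ring $\gr T$ to $T$ via the tensor filtration on $T^e$. The paper instead exploits the fact that $T$ sits inside the Hopf algebra $H$ as a coideal subalgebra: homological smoothness is taken directly from Liu--Wu \cite[Lemma 3.7]{Rigid}, and the twisted Calabi--Yau conclusion (together with the explicit Nakayama automorphism $\nu = S^2 \circ \tau_\chi^\ell$) comes from \cite[Theorem 3.6]{Rigid}, both of which are results specific to quantum homogeneous spaces. Your approach, if the lifting literature you allude to really covers this case, would establish the bare twisted Calabi--Yau statement of Corollary~\ref{homo}(2), but it does not see the antipode at all and so cannot yield the Nakayama formula of Theorem~\ref{antipodethrm}(3); the paper's method gets both at once. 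Conversely, your argument is more self-contained in that it uses only the intrinsic filtered structure of $T$ and not the ambient Hopf algebra $H$. Note also that the precise references you give for the filtered Calabi--Yau lift are somewhat garbled (there is no ``Liu--Wu--Zhu''; the relevant papers are rather \cite{Rigid} and \cite{LiuWang}), so if you pursue this line you should pin down exactly which deformation theorem you are invoking.
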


For unexplained terminology in the above, see the references and definitions in $\S$\ref{basic}, $\S$\ref{CY}, where these results are proved. The twisting automorphism in (2) is discussed in the next subsection.

\subsection{The antipode} \label{anti} As already mentioned, one of the pillars on which our work stands is the paper of Masuoka \cite{Masuoka}. The relevant specialisation of the main result of that paper is stated here as Proposition \ref{quantmasuoka}, a central feature being the existence of a bijection between the left coideal subalgebras of $H$ and the quotient right $H$-module coalgebras of $H$. Using this bijection and a well-known lemma due to Koppinen \cite{Ko}, we deduce the following, where (1) is well-known, and valid in a much wider setting (see Lemma \ref{general}), and the remaining parts are proved in Theorem \ref{antisquare} and Proposition \ref{conak}.

\begin{theorem}\label{antipodethrm} Let $H$ and $T$ be as in Theorem \ref{firstmain}, and let $S$ denote the antipode of $H$.
\begin{enumerate}
\item The map $T \mapsto S(T)$ gives a bijection between the sets of right and left coideal subalgebras of $H$.
\item $S^2(T) = T$, and either $(S^2)_{|T} = \mathrm{id}$ or $|(S^2)_{|T}| = \infty$.
\item Suppose $T$ is a right quantum homogeneous space. Then the Nakayama automorphism of $T$ is $S^2 \circ \tau^{\ell}_{\chi}$, where $\chi$ is the character of the right structure of the left integral of $T$. There is an analogous formula which applies when $T$ is a left quantum homogeneous space.
\end{enumerate}
\end{theorem}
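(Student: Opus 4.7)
The plan for (1) is entirely general and uses only the Hopf axioms: from the identity $\Delta S = (S \otimes S)\circ\tau\circ\Delta$ (with $\tau$ the flip) one checks that if $T$ is a right coideal subalgebra then $S(T)$ is closed under multiplication and satisfies $\Delta(S(T)) \subseteq H \otimes S(T)$, so $S(T)$ is a left coideal subalgebra, and symmetrically. Since the antipode of a connected Hopf algebra of finite GK-dimension in characteristic zero is bijective, $S$ and $S^{-1}$ furnish mutually inverse maps between the two sets. This is the content of Lemma \ref{general} in the broader framework cited there.

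For (2) the plan is to first establish $S^2(T) = T$ and then dissect the restriction $(S^2)_{|T}$. By Theorem \ref{beautiful} the algebra $\gr H$ is commutative, and on a commutative Hopf algebra $S^2 = \mathrm{id}$ automatically; combined with Koppinen's lemma that $S$ preserves the coradical filtration of a pointed Hopf algebra, this says $S^2$ induces the identity on $\gr H$, equivalently $S^2 - \mathrm{id}$ strictly lowers coradical-filtration degree on $H$. Applying (1) twice shows $S^2(T)$ is a coideal subalgebra of the same handedness as $T$, with the same image in $\gr H$ as $T$. A short filtered-graded argument — two exhaustive filtered subspaces of $H$ sharing the same graded subspace of $\gr H$ necessarily coincide — then yields $S^2(T) = T$. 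The restricted map $(S^2)_{|T}$ is therefore a filtered algebra automorphism of $T$ inducing the identity on $\gr T$; since the coradical filtration of $T$ has finite-dimensional pieces (because $\GK T < \infty$) and exhausts $T$, $(S^2)_{|T}$ is locally unipotent. A locally unipotent operator of finite order in characteristic zero is the identity, which gives the dichotomy $|(S^2)_{|T}| \in \{1, \infty\}$.

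For (3) the strategy is to transport the formula of Brown-Zhang, which identifies the Nakayama automorphism of a noetherian Hopf algebra as $S^2$ twisted by the winding automorphism of the integral character, from $H$ to the quantum homogeneous space $T$. Corollary \ref{homo}(2) already supplies a Nakayama automorphism $\nu_T$ of $T$; pinning it down is the content of (3). The plan is to compute $\calExt^m_{T^e}(T, T \otimes T)$ using, say, a Koszul-type resolution derived from the polynomial structure of $\gr T$ provided by Theorem \ref{firstmain}(2), and to read off the right $T$-module twist. The left integral of $T$ spans a one-dimensional subspace on which $T$ acts on the right by a character $\chi$; this right-action character is the new datum not present in the Hopf case, where one-dimensionality forces the two sides of the integral to be controlled by a single character, and it is exactly what forces a $\tau^\ell_\chi$ factor into the formula. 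Faithful flatness of $H$ over $T$ from Theorem \ref{firstmain}(1), combined with the fact that $S^2$ already restricts to an automorphism of $T$ by (2), permits the comparison with Brown-Zhang for $H$ and propagates the $S^2$ factor down to $T$. I expect the main obstacle to be the bookkeeping sorting out which of several plausible characters of $T$ actually appears — matching left versus right coideal conventions and the corresponding direction of the Masuoka bijection of Proposition \ref{quantmasuoka} — rather than any new homological input beyond the Brown-Zhang paradigm.
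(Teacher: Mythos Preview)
Your approach to (1) is correct and matches the paper's Lemma \ref{leftright} (not Lemma \ref{general}, which is a different statement). Your dichotomy argument for the second half of (2) --- that $(S^2)_{|T}$ is locally unipotent on the finite-dimensional filtration pieces $T_n$, hence in characteristic $0$ has finite order only if it is the identity --- is a clean variant of the paper's explicit computation $S^{2m}(h) = h + mr$.

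There is, however, a genuine gap in your proof of $S^2(T) = T$. The claim ``two exhaustive filtered subspaces of $H$ sharing the same graded subspace of $\gr H$ necessarily coincide'' is false, even when both subspaces are left coideal subalgebras. The paper's own Example \ref{lambdasig}(4) exhibits an infinite family $\{L_\beta : \beta \in (k\setminus\{0\})\cup\{\infty\}\}$ of pairwise distinct left coideal subalgebras of $B(\lambda)$ all satisfying $\gr L_\beta = k[Y,Z]$. So knowing that $\gr(S^2(T)) = \gr T$ (which your argument does establish) is not enough to conclude $S^2(T) = T$. You also misidentify Koppinen's lemma: it is not the statement that $S$ preserves the coradical filtration (that is \cite[Lemma 5.2.8]{Mont}), but rather the identity $S(T^+H) = HT^+$. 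The paper's proof of $S^2(T) = T$ does not use filtered-graded methods at all; it instead exploits Masuoka's bijection (Proposition \ref{quantmasuoka}(3)) between left coideal subalgebras and quotient right $H$-module coalgebras, tracking $T \mapsto S(T) \mapsto S^2(T)$ through this bijection and using the actual Koppinen lemma to show that $S^2(T)$ and $T$ correspond to the same quotient $H/T^+H$, whence they coincide by injectivity of the correspondence.

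For (3), your plan is in the right spirit but does not identify the non-trivial ingredient. The formula $\nu = S^2 \circ \tau^\ell_\chi$ is not obtained by descent from Brown--Zhang for $H$ via faithful flatness; rather, the paper invokes the results of Liu--Wu \cite{Rigid}, \cite{LiuWu} (building on Kr\"ahmer \cite{Uli}), which establish directly that a right coideal subalgebra of a suitable Hopf algebra is homologically smooth with Nakayama automorphism of the stated form. A key point you anticipate correctly is that one must check $\tau^\ell_\chi$ actually maps $T$ into itself; this is \cite[Lemma 3.9]{LiuWu} and is not automatic.
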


Both possibilities can occur in (2): for $T = H$, the smallest example with $|S|$ infinite occurs at Gel'fand-Kirillov dimension 3, namely the infinite family of examples $B(\lambda)$ found in \cite{Zhuang} and recalled here in $\S$\ref{Blambda}.  For $T \neq H$, $S^2$ can have infinite order already at dimension 2, as we show (for a coideal subalgebra of $B(\lambda)$) in (\ref{infinity}) in $\S$\ref{smallHopf}. For the unexplained terminology in (3), see $\S$\ref{CY}. The determination of the Nakayama automorphism in (3) depends crucially on earlier work of Kraehmer \cite{Uli} and of Liu and Wu \cite{LiuWu}, \cite{Rigid}.

\subsection{The signature and the lantern} \label{siggy} Let $H$ and $T$ be as in Theorem \ref{firstmain}. Given that  $\mathrm{gr}H$ and $\mathrm{gr}T$ are graded polynomial algebras, their homogeneous generators have specific degrees whose multisets of values constitute invariants $\sigma(T)$ and $\sigma(H)$ of $T$ [resp. $H$]. We call $\sigma(T)$ the \emph{signature} of $T$, and write $\sigma (T) = (e_1^{(r_1)}, \ldots , e_s^{(r_s)})$, where $e_i$ and $r_i$ are positive integers with $e_1 < e_2 < \cdots < e_s$, and the term $e_i^{(r_i)}$ indicates that the degree $e_i$ occurs $r_i$ times among the graded polynomial generators of $\mathrm{gr}T$. A closely related invariant is the \emph{lantern} $\mathcal{L}(T)$ of $T$, defined in \cite[Definition 1.2(d)]{WZZ4} for $H$ itself, extended here to a quantum homogeneous space $T$: namely, $\mathcal{L}(T)$ is the $k$-vector space of primitive elements of the graded dual $(\mathrm{gr}(T))^*$. The basic properties of these invariants are as follows.

\begin{theorem} \label{siglant} Let $H$ be a connected Hopf algebra of finite Gel'fand-Kirillov dimension $n$ over an algebraically closed field $k$ of characteristic $0$ and let $T$ be a left coideal subalgebra of $H$ with $\mathrm{GKdim}T = m > 0$. Let $\sigma (H) = (d_1^{(m_1)}, \dots , d_t^{(m_t)})$ and $\sigma (T) = (e_1^{(r_1)}, \ldots , e_s^{(r_s)})$. Then
\begin{enumerate}
\item $\sum_i d_i m_i = n$ and $\sum_j e_j r_j = m$.
\item $d_1 = e_1 = 1$; if $n \geq 2$, then $r_1 \geq 2$.
\item $\sigma (H) = (1^{(n)})$ if and only if, as a Hopf algebra, $H \cong U(\mathfrak{g})$, the enveloping algebra of its $n$-dimensional Lie algebra $\mathfrak{g}$ of primitive elements.
\item $\mathcal{L}(H) = \bigoplus_i \mathcal{L}(H)(d_i)$ is a positively graded Lie algebra of dimension $n$, with $\mathrm{dim}_k \mathcal{L}(H)(d_i) = m_i$.
\item $\mathcal{L}(H)$ is generated in degree 1 - that is, $\mathcal{L}(H) = \langle \mathcal{L}(H)(1) \rangle$.
\item If $i < t$ then $d_{i+1} = d_i + 1$.
\item $\mathcal{L}(T)$ is a graded quotient of $\mathcal{L}(H)$.
\item $\sigma (T)$ is a submultiset of $\sigma (H)$.
\end{enumerate}
\end{theorem}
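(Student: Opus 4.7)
The plan is to work throughout with the associated graded picture supplied by Theorem~\ref{firstmain}: $\gr H$ is a commutative graded connected Hopf algebra that is a polynomial $k$-algebra in $n = \GKdim H$ variables, and $\gr T$ is a graded left coideal subalgebra of $\gr H$ that is itself a polynomial $k$-algebra in $m = \GKdim T$ variables. The coradical filtration of $\gr H$ coincides with its grading, so in particular $(\gr H)(1) = P(\gr H) = P(H)$ and $(\gr T)(1) = T \cap P(H)$.

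Part (1) is a direct count of polynomial generators against Krull dimension, which coincides with GK dimension for polynomial rings. For (2), $d_1 = e_1 = 1$ because $H$ and $T$ are connected and their primitive spaces contribute nontrivial degree-$1$ classes in $\gr H$ and $\gr T$; the stated multiplicity bound in degree $1$ when $n \geq 2$ follows from (5), since a positively graded Lie algebra of total dimension $\geq 2$ that is generated in degree $1$ must itself have dimension $\geq 2$ in degree $1$. For (3), if every polynomial generator of $\gr H$ sits in degree $1$, then $\gr H \cong S(P(H))$ as graded Hopf algebras and the canonical Hopf algebra map $U(P(H)) \to H$ is an isomorphism by a PBW--coradical comparison on the associated graded map; the converse is standard for an enveloping algebra.

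The heart of the argument is (4)--(6). Since $\gr H$ is a commutative graded connected Hopf algebra, its graded dual $(\gr H)^*$ is a cocommutative graded connected Hopf algebra, so by the graded Milnor--Moore theorem in characteristic $0$ we have $(\gr H)^* \cong U(\mathcal{L}(H))$ with $\mathcal{L}(H) := P((\gr H)^*)$ a positively graded Lie $k$-algebra. PBW then gives $\dim_k \mathcal{L}(H) = n$ and $\dim_k \mathcal{L}(H)(d_i) = m_i$, yielding (4). For (5), the approach is to dualize: $(\gr H)^*$ is algebra-generated by $\mathcal{L}(H)(1)$ if and only if, via PBW for the Lie algebra, $\mathcal{L}(H)$ is Lie-generated by $\mathcal{L}(H)(1)$, and dually if and only if $\gr H$ is coradically generated in the strongest sense, so that iterated comultiplication $\gr H \to \gr H^{\otimes d}$ followed by projection onto $((\gr H)(1))^{\otimes d}$ separates the degree-$d$ part of $\gr H$. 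Part (6) then follows from (5): if $\mathcal{L}(H) = \langle \mathcal{L}(H)(1) \rangle_{\mathrm{Lie}}$, then $\mathcal{L}(H)(d+1) \subseteq [\mathcal{L}(H)(1), \mathcal{L}(H)(d)]$, so vanishing of any $\mathcal{L}(H)(d)$ propagates upward and the nonzero degrees form an initial segment $\{1, 2, \ldots, t\}$ of the positive integers.

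For (7) and (8), the extra ingredient is the coideal subalgebra structure: identifying $\gr H$ with the coordinate ring $\calO(U)$ of a unipotent algebraic $k$-group $U$ (as in $\S\ref{commutative}$) makes $\gr T$ the coordinate ring $\calO(U/V)$ of a homogeneous $U$-space for a closed subgroup $V \le U$. The inclusion $\gr T \hookrightarrow \gr H$ of augmented graded algebras then induces a graded injection $(\gr T)^+/((\gr T)^+)^2 \hookrightarrow (\gr H)^+/((\gr H)^+)^2$ of cotangent spaces --- corresponding geometrically to the surjection $\mathrm{Lie}(U) \twoheadrightarrow \mathrm{Lie}(U)/\mathrm{Lie}(V)$ --- and passing to graded duals produces the graded surjection $\mathcal{L}(H) \twoheadrightarrow \mathcal{L}(T)$ of (7), whose kernel is identifiable with $\mathrm{Lie}(V)$ as a graded subspace. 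Reading off graded dimensions degree by degree then forces $r_j \leq m_i$ whenever $e_j = d_i$ and $r_j = 0$ whenever $e_j \notin \{d_1, \ldots, d_t\}$, proving (8). The principal obstacle is (5): establishing degree-$1$ generation of $\mathcal{L}(H)$ requires the full Hopf structure of $\gr H$ beyond its bare polynomial algebra description and is essentially the content of \cite{WZZ4}; once (5) is in hand, parts (6)--(8) follow from the transparent Lie-algebraic and geometric arguments sketched above.
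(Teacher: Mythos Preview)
Your proposal is correct and follows essentially the same route as the paper: pass to $\gr H$ and $\gr T$, use the graded dual $(\gr H)^*\cong U(\mathcal{L}(H))$ to obtain (4), cite \cite{WZZ4} (equivalently \cite[Lemma 5.5]{And}) for the degree-1 generation in (5), and deduce (6) from the resulting Carnot structure. For (7) and (8) you give a slightly more geometric argument than the paper's: you read off $\mathcal{L}(T)$ directly from the injection of cotangent spaces $(\gr T)^+/((\gr T)^+)^2 \hookrightarrow (\gr H)^+/((\gr H)^+)^2$ (this injection is exactly Lemma~\ref{hilbkey}(2) in the paper, since $\gr H$ is polynomial over $\gr T$), then dualise; the paper instead constructs the surjection $\mathcal{L}(H)\twoheadrightarrow\mathcal{L}(T)$ via the convolution $U(\mathcal{L}(H))$-module structure on $\mathcal{D}_T$ and a dimension count (Proposition~\ref{litup}(5),(6)). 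Your route is a little shorter and makes the identification of the kernel with $\mathrm{Lie}(V)$ transparent; the paper's route has the advantage of simultaneously exhibiting $\mathcal{D}_T$ as the induced module $U(\mathcal{L}(H))/U(\mathcal{L}(H))\mathfrak{y}_T$, which is marginally more information than you need for (7). Note also that the inequality ``$r_1\ge 2$'' in (2) should be read as ``$m_1\ge 2$'' (for $H$, not $T$); the paper itself remarks after Theorem~\ref{numbers} that the bound fails for coideal subalgebras.
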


Parts (2),(3),(4), and (5) of this theorem were proved in \cite{WZZ4}, but they are proved again here in the course of proving the remaining parts, in $\S$5. Finite dimensional positively graded (and therefore nilpotent) Lie algebras which are generated in degree 1, as $\mathcal{L}(H)$ is by (4) and (5) of the theorem, are called \emph{Carnot Lie algebras} in the literature; see for example \cite{Corn}.

We view one important function of these equivalent invariants as being to provide a framework for future work on connected Hopf algebras. We discuss this aspect further in $\S$\ref{open}.

\subsection{Examples}\label{examples} In $\S$3 we describe what is known about various particular subclasses of quantum homogeneous spaces $T$ of connected Hopf algebras $H$ of Hopf algebras of finite Gel'fand-Kirillov dimension. Thus, we discusses the cases where $H$ or $T$ is commutative; where $H$ or simply $T$ is cocommutative; where $\mathrm{GKdim}T \leq 2$; and where $\mathrm{GKdim}H \leq 3$. Particularly noteworthy is the fact, recorded in Proposition \ref{Jordanprop} in $\S$\ref{tiny}, that
\begin{align*} &\textit{the Jordan plane } J = k\langle X,Y: [X,Y] = Y^2\rangle \textit{ is a coideal subalgebra }\\ &\textit{     of a connected Hopf algebra } H \textit{ with } \mathrm{GKdim}H = 3.\end{align*}

 It is not hard to see that $J$ cannot be supplied with a comultiplication with respect to which it is a Hopf algebra.

\subsection{Layout and notation}\label{notation} Some basic definitions, the keystone theorems of Masuoka and Zhuang, and the relevant properties of the antipode, are given in $\S$2. Examples are discussed in $\S$3; particularly crucial here is the commutative case, since this yields the required properties of $\mathrm{gr}T$ and $\mathrm{gr}H$. The homological results are described and proved in $\S$4, and the signature and lantern are covered in $\S$5. $\S$6 contains a short discussion of future research directions.

Throughout let $k$ denote a base field, which we assume to be algebraically closed and of characteristic $0$, unless otherwise stated. All vector spaces and unadorned tensor products will be assumed to be over the base field $k$. For a Hopf algebra $H$, we use $\Delta$, $\epsilon$ and $S$ to denote the coproduct, counit and antipode respectively. The kernel of the counit will be denoted as $H^{+}$. Throughout this paper, the bijectivity of the antipode $S$ is incorporated as part of the definition of a Hopf algebra; in practice this makes no difference to the main results, since bijectivity of $S$ always holds when $H$ is connected by \cite[\textrm{Proposition 1.2}]{Masuoka}.

\section{Basic definitions}\label{preliminaries}

\subsection{The Coradical Filtration}\label{coradfilt}
 Let $H$ be a Hopf algebra. Recall that an element $x\in H$ is $\emph{primitive}$ if $\Delta(x)=1\otimes x+x\otimes 1.$ The subspace of primitive elements of $H$, denoted by $P(H)$, is called the $\emph{primitive space}$ of $H$; $P(H)$ is a Lie subalgebra of $H$ with respect to the commutator bracket $[x,y]=xy-yx$. Let $H_{0}$ denote the \emph{coradical} of $H$, that is, the sum of all simple coalgebras of $H$, and define inductively the ascending chain of subcoalgebras of $H$,
\[
H_{n}:=\Delta^{-1}(H\otimes H_{n-1}+ H_{0}\otimes H).
\]
Then $\{H_{n}\}_{n=0}^{\infty}$ is the \emph{coradical filtration of} $H$.

Suppose now that $H$ is \emph{connected}, that is, $H_{0}=k$. Then $H_1 = k \oplus P(H)$, and, by \cite[\textrm{Lemma 5.2.8}]{Mont} $\{H_{n}\}_{n=0}^{\infty}$ forms an algebra as well as a coalgebra filtration of $H$, such that $S(H_{i})\subseteq H_{i}$ for $i\geq 0$; that is, it is a \emph{Hopf filtration}. Hence, the associated graded coalgebra of $H$ with respect to the coradical filtration,
$$\gr{H}=\bigoplus_{i=0}^{\infty} H(i):=\bigoplus_{i=0}^{\infty}H_{i}/H_{i-1}; \quad H_{-1}:=\{0\}.$$
forms a graded connected Hopf algebra. Furthermore, by a result of Sweedler (\cite[\S 11.2]{Sweedler}) with several subsequent rediscoveries, when $H$ is a connected Hopf algebra (over $\emph{any}$ field), $\gr H$ is commutative.

As noted in \cite[$\S$1]{WZZ4} and \cite[\textrm{Remark 5.5}]{Zhuang}, if $H$ is connected and $\operatorname{dim}_{k}(P(H))<\infty$ (which is the case when $\GK{H}<\infty$ since $U(P(H))\subseteq H$), then $\operatorname{dim}H(i)<\infty$ for all $i\geq0$. Thus, by \cite[Theorem 6.9]{Zhuang} (which we restate as Theorem \ref{beautiful} below), when $H$ is connected of finite GK-dimension,  $\gr H$ is a graded polynomial algebra in finitely many homogeneous variables. Moreover, $\gr H$ is a $\emph{coradically graded coalgebra} $ \cite[Definition 1.13]{And} - that is, for $n\geq 0$, $(\gr H)_{n}=\bigoplus_{i=0}^{n}H(i)$. Summarising the above discussion, we obtain

\begin{theorem}\label{beautiful}($\mathrm{Zhuang}$) \cite[Theorem 6.9]{Zhuang})
Assume that $k$ is an algebraically closed field of characteristic $0$ and let $H$ be a connected Hopf $k$-algebra, with associated notation as introduced above. Then the following statements are equivalent:
\begin{enumerate}
\item{$\operatorname{GKdim}{H}<\infty$.}
\item{$\operatorname{GKdim}{\operatorname{gr}H}<\infty$.}
\item{$\operatorname{gr}H$ is finitely generated.}
\item{$\operatorname{gr}H$ is isomorphic as an algebra to the polynomial $k$-algebra in $\ell$ variables for some $\ell\geq{0}$.}
\end{enumerate}
In this case, $\operatorname{GKdim}{H}=\operatorname{GKdim}{\operatorname{gr}H} = \ell$. Moreover,
$\gr H$ is connected both as a Hopf algebra and as a graded algebra.
\end{theorem}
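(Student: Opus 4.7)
The plan is to apply the classical structure theorem of Leray for commutative connected graded Hopf algebras in characteristic zero to $\gr H$, showing it is a polynomial algebra on its indecomposables, and then to extract the equivalence of $(1)$--$(4)$ by controlling the number of polynomial generators.

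By Sweedler's theorem cited above, $\gr H$ is commutative, hence a commutative connected graded Hopf algebra. Leray's theorem in characteristic zero identifies $\gr H$ with a polynomial algebra on the graded vector space of indecomposables $V = (\gr H)^+/((\gr H)^+)^2$. Condition $(4)$ is thus equivalent to $\dim_k V < \infty$. From this, $(4) \Rightarrow (3)$ and $(4) \Rightarrow (2)$ are immediate, while $(4) \Rightarrow (1)$ follows from the standard filtered-graded identity for filtered algebras with noetherian associated graded and finite-dimensional filtration pieces, giving $\GK H = \GK \gr H = \dim_k V$.

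For the reverse direction, it suffices to show that each of $(1), (2), (3)$ forces $\dim_k V < \infty$. For $(3)$, $\gr H$ finitely generated as an algebra directly forces $\dim_k V < \infty$. For $(2)$, if $\dim_k V$ were infinite, $\gr H$ would contain $k[x_1, \ldots, x_n]$ as a subalgebra for every $n$, contradicting $\GK \gr H < \infty$. For $(1)$, a little more work is needed. In characteristic zero $U(P(H)) \hookrightarrow H$, so $\dim_k P(H) \leq \GK U(P(H)) \leq \GK H < \infty$. By the observation recalled immediately before the theorem, this implies $\dim_k H(i) < \infty$ for every $i$, so $\gr H$ is of finite type. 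Now lift polynomial generators $x_1, x_2, \ldots$ of $\gr H$ to elements $\tilde{x}_1, \tilde{x}_2, \ldots \in H$; each subalgebra $k\langle \tilde{x}_1, \ldots, \tilde{x}_n\rangle \subseteq H$, under the restricted coradical filtration, has associated graded containing $k[x_1, \ldots, x_n]$ and hence GK-dimension at least $n$, forcing $\dim_k V \leq \GK H < \infty$.

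The moreover assertions follow readily: $\GK H = \GK \gr H = \ell$ has already been built into the argument; $\gr H$ is connected as a graded algebra since $(\gr H)_0 = H(0) = k$, and as a Hopf algebra since it is coradically graded, as recalled in the preamble. The main obstacle is the appeal to Leray's structure theorem, a substantial classical input ensuring that a commutative connected graded Hopf algebra in characteristic zero is free as a commutative algebra; once this is available, the remainder of the argument is routine bookkeeping with filtered-graded GK-dimension.
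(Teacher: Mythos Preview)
The paper does not supply its own proof of this theorem: it is stated as Zhuang's result and attributed to \cite[Theorem 6.9]{Zhuang}, the preceding paragraph merely assembling background (Sweedler's commutativity of $\gr H$, local finiteness of $\gr H$ when $\dim_k P(H)<\infty$, coradical gradedness) before deferring entirely to the reference. Your proposal, by contrast, gives an actual argument, and a natural one: invoke the Leray structure theorem to identify the commutative connected graded Hopf algebra $\gr H$ with the polynomial algebra on its graded space $V$ of indecomposables, and then reduce all four conditions to the single statement $\dim_k V<\infty$.

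Your handling of the implications is sound. The only direction requiring real work is $(1)\Rightarrow(4)$, and your chain --- $\dim_k P(H)\le\GK H<\infty$, hence $\gr H$ is of finite type, hence lifts of $n$ independent indecomposables generate a filtered subalgebra of $H$ whose associated graded contains $k[x_1,\dots,x_n]$, forcing $n\le\GK H$ --- is correct, relying on the equality $\GK A=\GK\gr A$ valid for filtrations with finite-dimensional pieces. One small caution: the version of Leray's theorem you invoke must be one that does not presuppose finite type, since in the direction $(2)\Rightarrow(4)$ you apply it before local finiteness has been established; such versions are available in characteristic zero, but you should be explicit about this when citing the result. With that proviso your proof is correct, though there is no proof in the present paper against which to compare it.
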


Whenever we speak of the associated graded algebra of a connected Hopf algebra, we shall mean with respect to its coradical filtration. Similarly, the \emph{degree} of an element of a Hopf algebra will always be understood to be with respect to the coradical filtration, unless otherwise stated.

\subsection{Coideal subalgebras and quantum homogeneous spaces} \label{coideal} Let $H$ be a Hopf algebra. A subalgebra $T$ of $H$ is a \emph{left coideal subalgebra} if
$$\Delta(T)\subseteq H\otimes T.$$ Similarly we say that $T$ is a \emph{right coideal subalgebra} if $\Delta(T)\subseteq T\otimes H.$ The  \emph{coradical filtration} $\mathcal{T} := \{T_{n}\}_{n\geq 0}$ of a left coideal subalgebra $T$ of $H$ is defined to be  $$T_{n}:=T\cap H_{n}.$$ For each $n\geq 0$, $\Delta(T_{n})\subset H\otimes T_{n},$ and so the associated graded space
\[
\operatorname{gr}{T}:=\bigoplus_{i=0}^{\infty} T(i)\subseteq \gr{H}; \hspace{0.1in} T(i):=((T\cap H_{i})+H_{i-1})/H_{i-1}.
\]
satisfies the condition
\[
\Delta(\gr{T})\subset \gr{H}\otimes \gr{T}.
\]
Summing up the above we get the following.

\begin{lemma}\label{gradedcoideal} Let $H$ be a connected Hopf $k$-algebra of finite GK-dimension, and let $T$ be a left coideal subalgebra of $H$. Then $\gr{T}$ is a left coideal subalgebra of  the commutative graded connected Hopf algebra $\gr{H}$.
\end{lemma}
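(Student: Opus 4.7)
The plan is to verify the two conditions required of $\gr T$: that it is a subalgebra of $\gr H$, and that its image under the comultiplication inherited by $\gr H$ lies in $\gr H \otimes \gr T$. The properties of $\gr H$ as a commutative graded connected Hopf algebra are already available from Theorem \ref{beautiful} and the preceding discussion (Sweedler's result), so no work is needed on that side.

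First I would check that $\{T_n\}$ is an algebra filtration of $T$. Since the coradical filtration $\{H_n\}$ of $H$ is an algebra filtration by \cite[Lemma 5.2.8]{Mont}, and $T$ is a subalgebra, we have $T_i \cdot T_j \subseteq T \cap H_{i+j} = T_{i+j}$. This makes $\gr T$ into a graded subalgebra of $\gr H$ in a natural way: the inclusion $T \cap H_i \hookrightarrow H_i$ induces an injection $T(i) = (T \cap H_i)/(T \cap H_{i-1}) \hookrightarrow H_i/H_{i-1} = H(i)$ (using that $(T \cap H_i) \cap H_{i-1} = T \cap H_{i-1}$).

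Next, for the left coideal condition, the coalgebra filtration property of $\{H_n\}$ gives $\Delta(H_n) \subseteq \sum_{i+j=n} H_i \otimes H_j$, while the coideal condition on $T$ gives $\Delta(T) \subseteq H \otimes T$. Combining, for $t \in T_n$,
\[
\Delta(t) \in \Bigl(\sum_{i+j=n} H_i \otimes H_j\Bigr) \cap (H \otimes T).
\]
The key elementary fact is that for any subspaces $A \subseteq H$ and $B \subseteq H$, one has $(A \otimes H) \cap (H \otimes B) = A \otimes B$, and a similar identity for finite sums of tensor products involving nested subspaces. Applying this yields
\[
\Delta(T_n) \subseteq \sum_{i+j=n} H_i \otimes T_j.
\]
Passing to the associated graded, the induced comultiplication $\overline{\Delta}$ on $\gr H$ sends the class of $t \in T_n$ into $\sum_{i+j=n} H(i) \otimes T(j) \subseteq \gr H \otimes \gr T$, as required.

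I expect the only mildly delicate step to be the intersection identity underlying step two; once that is in hand, the lemma is just the combination of two bookkeeping arguments. Everything else is immediate from Theorem \ref{beautiful} and the standing hypotheses.
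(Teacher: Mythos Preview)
Your proof is correct and follows essentially the same route as the paper. The paper treats this lemma as a summary of the discussion immediately preceding it: there it is observed that $\Delta(T_n)\subseteq H\otimes T_n$ (using that each $H_n$ is a subcoalgebra together with the left coideal property of $T$), from which the graded coideal condition is deduced. Your argument makes the passage to the associated graded more explicit by proving the refined inclusion $\Delta(T_n)\subseteq\sum_{i+j=n}H_i\otimes T_j$ via the intersection identity, which is indeed the only point requiring a moment's care; otherwise the two arguments coincide.
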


There is some inconsistency in the literature as to the precise definition of a quantum homogeneous space. For example, Kraehmer in \cite{Uli} defines a left quantum homogeneous space to be a left coideal subalgebra $C$ of a Hopf algebra $H$ such that $H$ is a faithfully flat left $C$-module. We adopt in this paper the formally more restrictive definition used in \cite{LiuWu}: a \emph{left quantum homogeneous space} of the Hopf algebra $H$ with a bijective antipode $S$ is a left coideal subalgebra $T$ of $H$ such that $H$ is a faithfully flat left \emph{and} right $T$-module. In fact, for the connected Hopf algebras which are the object of study in this paper, the distinction is irrelevant, as shown by the result below.

Given a quotient right $H$-module coalgebra $\pi (H)$ of $H$, define the \emph{right coinvariants} of $\pi$,
$$ H^{\operatorname{co}\pi} := \{ h \in H : \sum h_{1} \otimes \pi(h_{2}) = h \otimes \pi(1) \}. $$
Then $ H^{\operatorname{co}\pi}$ is a left coideal subalgebra of $H$. Analogously, the \emph{left coinvariants} of $\pi$ are
$$ ^{\operatorname{co}\pi}H := \{ h \in H : \sum \pi(h_{1}) \otimes h_{2} =  \pi(1)\otimes h \}, $$
a right coideal subalgebra of $H$.

\begin{proposition}\label{quantmasuoka}($\mathrm{Masuoka}$, \cite{Masuoka})
Let $H$ be a connected Hopf algebra and $T\subseteq H$ a subalgebra.
\begin{enumerate}
\item The antipode $S$ of $H$ is bijective.
\item $T$ is a left coideal subalgebra  if and only if it is a left quantum homogeneous space. In this case, $H$ is a free left and right $T$-module.

\item The correspondences $T \mapsto \{\pi_T: H \longrightarrow H/T^+ H \}$ and $\pi \mapsto H^{\operatorname{co}\pi}$ give a bijective correspondence between the left quantum homogeneous spaces of $H$ and the quotient right $H$-module coalgebras of $H$.
\item There is a parallel version of (3) for right coideal subalgebras of $H$.

\end{enumerate}
\end{proposition}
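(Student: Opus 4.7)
All four parts are due to Masuoka, so the plan is to indicate how our setup fits into \cite{Masuoka} and what the key intermediate steps are. Part (1) is a direct citation of \cite[Proposition 1.2]{Masuoka} (already recorded in \S\ref{notation}); the proof there uses that $H = \bigcup_n H_n$ is exhausted by its coradical pieces, on each of which bijectivity of $S$ can be established inductively, starting from $S_{|H_0} = \mathrm{id}_k$ and using that the coradical filtration is a Hopf filtration.

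The substantive content is (2). The implication ``left quantum homogeneous space $\Rightarrow$ left coideal subalgebra'' is by definition, so what needs proof is that faithful flatness on both sides is automatic. I would filter $T$ by $T_n := T \cap H_n$ and invoke Lemma \ref{gradedcoideal} to conclude that $\mathrm{gr}T$ is a left coideal subalgebra of the commutative connected graded Hopf algebra $\mathrm{gr}H$. Because $\mathrm{gr}H$ is the coordinate ring of a unipotent $k$-group $U$ (see \S\ref{commutative}), the inclusion $\mathrm{gr}T \subseteq \mathrm{gr}H$ corresponds geometrically to a quotient morphism $U \twoheadrightarrow U/V$ onto a homogeneous space, and such a morphism is faithfully flat; in characteristic zero it is in fact a principal $V$-bundle, so that $\mathrm{gr}H$ is even free over $\mathrm{gr}T$. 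One then transfers faithful flatness and freeness from the associated graded back to $H \supseteq T$ by the standard filtered-to-graded lifting argument, using that the filtration is exhaustive and bounded below.

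For (3), one first checks that $T^+ H$ is both a right ideal and a coideal, so that $\pi_T: H \to H/T^+H$ is indeed a quotient right $H$-module coalgebra, and conversely that $H^{\mathrm{co}\pi}$ is a left coideal subalgebra for every such quotient $\pi$. The two assignments are then mutually inverse by the Hopf--Galois correspondence of \cite{Takeuchi, Masuoka, Schn}, whose crucial hypothesis is precisely the faithful flatness established in (2); concretely, one needs $H^{\mathrm{co}\pi_T} = T$ and $\pi_{H^{\mathrm{co}\pi}} \cong \pi$. Part (4) then follows from (3) applied to the Hopf algebra $H^{\mathrm{op},\mathrm{cop}}$, or by a parallel direct argument. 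The main obstacle throughout is (2): faithful flatness is easy on the commutative associated graded, but transferring it back to $H$ requires careful control at each level of the coradical filtration, and this lifting step is the heart of Masuoka's contribution. Once (2) is in hand, parts (3) and (4) are essentially formal consequences of the Hopf--Galois machinery.
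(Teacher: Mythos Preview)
Your argument for (2) has a genuine gap: the proposition is stated for an \emph{arbitrary} connected Hopf algebra $H$, with no hypothesis of finite Gel'fand--Kirillov dimension. Your route through $\mathrm{gr}H$ being the coordinate ring of a unipotent group $U$ requires $\mathrm{gr}H$ to be affine, which by Theorem~\ref{beautiful} is equivalent to $\mathrm{GKdim}H < \infty$. Without that hypothesis $\mathrm{gr}H$ is still commutative (Sweedler), but need not be a polynomial algebra or the coordinate ring of an algebraic group, so the geometric faithful-flatness argument is unavailable. There is also a circularity risk in appealing to \S\ref{commutative}: in this paper the structure of coideal subalgebras in the commutative case (Theorem~\ref{unipotent}) is itself deduced from Proposition~\ref{quantmasuoka}.

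The paper's proof avoids all of this by a single observation: since $H$ is connected, $T_0 = T \cap H_0 = k$, so trivially $S(T_0) = T_0$. This is exactly the hypothesis of Masuoka's \cite[Theorem~1.3]{Masuoka} (valid for pointed $H$), which then yields faithful flatness; freeness is \cite[Proposition~1.4]{Masuoka}, and (3), (4) are \cite[Theorem~1.3(3)]{Masuoka}. So the ``heart of Masuoka's contribution'' is not a filtered-graded lifting at all, and the proof here is a two-line reduction to his general theorem rather than an independent argument. Your filtered-graded strategy could be made to work in the finite GK-dimension setting (and is conceptually appealing there), but it proves strictly less than what is claimed.
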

\begin{proof}
(1) This is \cite[Proposition 1.2(1)]{Masuoka}.

(2) Suppose $T$ is a left coideal subalgebra of the connected Hopf algebra $H$. Since $T_{0}=T\cap H_{0}=k$, $S(T_{0})=T_{0}$, and so, by \cite[\textrm{Theorem 1.3(1)}]{Masuoka}, $H$ is a left and right faithfully flat $T$-module. The last sentence is a special case of \cite[Proposition 1.4]{Masuoka}.

(3), (4): Since $S(T_0) = T_0$ as in (2), this is \cite[Theorem 1.3(3)]{Masuoka} and the version with right and left swapped.
\end{proof}

Note that, notwithstanding the above result, it is certainly not true that a left coideal subalgebra $T$ of an arbitrary Hopf algebra $H$ is a quantum homogeneous space. For example, let $H=k\langle x \rangle $, the group algebra of the infinite cyclic group. Then $T:=k[x]$ is both a left and right coideal subalgebra of $H$, but $\emph{not}$ a Hopf subalgebra, since $S(T)\nsubseteq T$; and $H$ is $\emph{not}$ a right or left faithfully flat $T$-module.
\\

The following (presumably well-known) lemma will be used later when classifying quantum homogeneous spaces of small GK-dimension.

\begin{lemma}\label{primitivecoideal}
Let $T$ be a left coideal subalgebra of a Hopf algebra $H$. Suppose $T \nsubseteq H_{0}$. Then
\[
T_{1}:=T\cap{H_{1}}\neq{T_{0}}.
\]
In particular, if $H$ is connected then
 $$P(T):=T\cap P(H)\neq k.$$
\end{lemma}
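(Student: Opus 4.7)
My plan is to pass to the associated graded Hopf algebra $\gr H$ with respect to the coradical filtration. The reasoning in Lemma~\ref{gradedcoideal} is purely formal and needs neither the connectivity nor the finite-GK-dimension hypothesis; the same argument therefore shows, for an arbitrary Hopf algebra $H$, that $\gr T=\bigoplus_{i}T(i)$, with $T(i)=T_{i}/T_{i-1}$, is a graded left coideal subalgebra of $\gr H$. Under the hypothesis $T\nsubseteq H_{0}$, some $t\in T$ lies in $H_{n}\setminus H_{n-1}$ for some $n\geq 1$, so $\gr T$ contains nonzero elements of positive degree.

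I would then let $n$ be the \emph{minimal} positive integer with $\gr T(n)\neq 0$ and choose $0\neq\bar t\in\gr T(n)$. Because $\gr H$ is a graded coalgebra,
\[
\Delta_{\gr H}(\bar t)\in\sum_{i+j=n}\gr H(i)\otimes\gr H(j),
\]
while the coideal subalgebra property yields $\Delta_{\gr H}(\bar t)\in\gr H\otimes\gr T$. Combining these two conditions in each bidegree, the $(i,j)$-component with $1\leq j\leq n-1$ lies in $\gr H(i)\otimes\gr T(j)=0$ by the minimal choice of $n$. Only the bidegrees $(n,0)$ and $(0,n)$ remain, so
\[
\Delta_{\gr H}(\bar t)\in\gr H(n)\otimes H_{0}+H_{0}\otimes\gr T(n)\subseteq\gr H\otimes H_{0}+H_{0}\otimes\gr H.
\]

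The key move will be to invoke the standard characterisation $C_{1}=\Delta^{-1}(C\otimes C_{0}+C_{0}\otimes C)$ of the first coradical filtration level of any coalgebra $C$, applied to $C=\gr H$; this forces $\bar t\in(\gr H)_{1}$. Since the associated graded of a coradical filtration is always coradically graded (cf.\ \cite[Definition~1.13]{And}), one has $(\gr H)_{1}=\gr H(0)\oplus\gr H(1)$, and as $\bar t$ is homogeneous of degree $n\geq 1$ this forces $n=1$. Hence $\gr T(1)=T_{1}/T_{0}\neq 0$, so $T_{1}\neq T_{0}$. The ``in particular'' clause is immediate: when $H$ is connected, $H_{0}=k$ gives $T_{0}=k$, and $H_{1}=k\oplus P(H)$ gives $T_{1}=k\oplus P(T)$, so $T_{1}\neq T_{0}$ translates to $P(T)\neq 0$.

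The step I expect to demand the most care is the passage from the bidegree decomposition of $\Delta_{\gr H}(\bar t)$ to the conclusion $\bar t\in(\gr H)_{1}$, together with the appeal to the fact that $\gr H$ is coradically graded in order to pinpoint $n=1$; with those two ingredients in hand, the rest is just filtration bookkeeping.
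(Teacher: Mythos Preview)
Your argument is correct and takes a genuinely different route from the paper's proof. The paper dualises: it chooses a finite-dimensional left subcomodule $\widehat T\subseteq T$ with $\widehat T\nsubseteq H_0$, lets $P$ be the (finite-dimensional) subcoalgebra it generates, and exploits the right hit action of the finite-dimensional algebra $P^{*}$ on $P$, under which $\widehat T$ is a submodule. Since the coradical filtration levels $P_i=P\cap H_i$ are exactly the annihilators of $J(P^{*})^{i+1}$, the strict inclusion $\mathrm{Ann}_{\widehat T}(J(P^{*}))\subsetneq\mathrm{Ann}_{\widehat T}(J(P^{*})^{2})$ produces an element of $T_1\setminus T_0$.

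Your approach instead stays on the coalgebra side, passing to $\gr H$ and combining the graded left comodule structure of $\gr T$ with minimality of the first nonzero graded piece and the coradically graded property of $\gr H$. The last point holds for an arbitrary coalgebra: the grading filtration $\{\bigoplus_{i\le m}\gr H(i)\}$ is a coalgebra filtration, and the standard fact that any coalgebra filtration dominates the coradical filtration (Montgomery, Lemma~5.3.4) gives $(\gr H)_m=\bigoplus_{i\le m}\gr H(i)$. This line of argument is arguably more in keeping with the filtered--graded viewpoint used elsewhere in the paper and avoids the passage to finite-dimensional duals; the paper's proof, by contrast, is self-contained in the sense that it does not appeal to the coradically graded property. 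One small terminological caveat: for a general Hopf algebra $H$ the coradical $H_0$ need not be a subalgebra, so $\gr H$ is a priori only a graded coalgebra and $\gr T$ only a graded left \emph{subcomodule}; fortunately that is all your argument actually uses.
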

\begin{proof}
Let $\widehat{T}$ be a finite dimensional left subcomodule of $T$ with $\widehat{T} \nsubseteq H_{0}$; this exists by \cite[\textrm{Theorem 5.1.1}]{Mont}. Let $P$ denote the subcoalgebra of $H$ generated by $\widehat{T}$; $P$ is finite dimensional by \cite[\textrm{Theorem 5.1.1}]{Mont}. Then the finite dimensional algebra $P^{*}$ acts on $P$ via the right hit action,
$$
\leftharpoonup: P\otimes {P^{*}}\rightarrow P \;:\; p\otimes f \mapsto p\leftharpoonup f =\sum f(p_{1})p_{2}.
$$
where $\Delta(p)=\sum p_{1}\otimes p_{2}$. Since $\widehat{T}$ is a left subcomodule of $T$, if $t\in\widehat{T}$ then  $\Delta(t)=\sum t_{1}\otimes t_{2}\in P\otimes \widehat{T}$. Thus $\sum f(t_{1})t_{2}\in \widehat{T}$ for all $t\in\widehat{T}$, so $\widehat{T}$ is a $P^{*}$-submodule of $P$.

As is well known (see for example \cite[$\S$5.2]{Mont}), the terms $P_{i}=P\cap{H_{i}}$ of the coradical filtration of $P$ are precisely the elements annihilated by the powers $J(P^{*})^{i+1}$ of the Jacobson radical $J(P^{*})$ of $P^{*}$. Now $\widehat{T}\leftharpoonup J(P^{*})\neq 0$ since $\widehat{T} \not\subset H_0$. Since $P^*$ is a finite dimensional algebra,
$$
\text{Ann}_{\widehat{T}}(J(P^{*})) \subsetneqq \text{Ann}_{\widehat{T}}(J(P^{*})^{2}),
$$
as required.
\end{proof}

\subsection{Quantum homogeneous spaces and the antipode}\label{antipode}
Recall the following well-known and easy facts, where $k$ can be any field.
\begin{lemma}\label{leftright}

Let $H$ be a Hopf algebra with a bijective antipode.

\begin{enumerate}
\item The map $C \mapsto S(C)$ gives a bijection between the sets of left and right coideal subalgebras of $H$. This bijection restricts to a bijection between the left and right quantum homogeneous spaces of $H$.
\item If $C$ is a left or right coideal subalgebra, then $S(C) = C$ if and only if $C$ is a Hopf subalgebra of $H$.
\end{enumerate}
\end{lemma}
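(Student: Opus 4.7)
My plan is to reduce everything to the standard antipode identity $\Delta \circ S = \tau \circ (S \otimes S) \circ \Delta$, combined with the anti-algebra property of $S$. For (1), if $C$ is a left coideal subalgebra, then $S(C)$ is a subalgebra because $S(c)S(c') = S(c'c) \in S(C)$, and it is a right coideal: writing $\Delta(c) = \sum c_1 \otimes c_2$ with $c_2 \in C$, we get $\Delta(S(c)) = \sum S(c_2) \otimes S(c_1) \in S(C) \otimes H$. Running the same argument with $S^{-1}$, which satisfies the analogous identity for the opposite comultiplication, shows that $S^{-1}$ sends right coideal subalgebras to left ones; this yields the claimed bijection.

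For the quantum homogeneous space refinement of (1), note that $S|_C : C \to S(C)$ is an anti-isomorphism of algebras, and the identity $S(ch) = S(h)S(c)$ shows that the additive bijection $S : H \to H$ intertwines the left $C$-module structure on $H$ (by left multiplication) with the right $S(C)$-module structure on $H$ (by right multiplication); dually, $S(hc) = S(c)S(h)$ converts the right $C$-action to a left $S(C)$-action. Therefore $H$ is faithfully flat as a left (resp.\ right) $C$-module if and only if it is faithfully flat as a right (resp.\ left) $S(C)$-module, so $S$ carries left quantum homogeneous spaces to right ones.

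For (2), the implication $(\Leftarrow)$ is straightforward: if $C$ is a Hopf subalgebra then $S(C) \subseteq C$ by definition, and the antipode of $C$ (as a Hopf algebra in its own right) must equal $S|_C$, which is bijective $C \to C$ by the paper's standing convention on Hopf algebras, so $S(C) = C$. For $(\Rightarrow)$, suppose $C$ is a left coideal subalgebra with $S(C) = C$. Then $\Delta(C) = \Delta(S(C)) = \tau(S \otimes S)\Delta(C) \subseteq \tau(H \otimes C) = C \otimes H$, so $\Delta(C) \subseteq (H \otimes C) \cap (C \otimes H) = C \otimes C$, the last equality being the standard basis argument (extend a basis of $C$ to a basis of $H$). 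Hence $C$ is a subbialgebra closed under $S$, hence a Hopf subalgebra. I do not anticipate any genuine obstacle, consistent with the authors calling these ``well-known and easy facts''; the only subtle point is the $(\Leftarrow)$ direction of (2), where one must invoke the standing bijectivity convention for antipodes rather than derive it, since without that convention a Hopf subalgebra need only satisfy $S(C) \subseteq C$.
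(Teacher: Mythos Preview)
Your proof is correct and follows the same route as the paper: for (1) you use the identity $\Delta\circ S=\tau\circ(S\otimes S)\circ\Delta$ exactly as the authors do, and for (2) the paper's one-line ``follows immediately from (1)'' unpacks precisely to your argument that $S(C)=C$ forces $C$ to be simultaneously a left and right coideal, hence $\Delta(C)\subseteq(H\otimes C)\cap(C\otimes H)=C\otimes C$. Your extra care with the flatness transfer and with the $(\Leftarrow)$ direction of (2) (invoking the standing bijectivity convention) simply fills in details the authors leave implicit.
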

\begin{proof}
(1) Let $C$ be a left coideal subalgebra of $H$ and $x\in C$. Then
\[
\Delta(S(x))=\tau\circ(S\otimes S)\circ \Delta(x)=\sum S(x_{2})\otimes S(x_{1})\in S(C)\otimes H.
\]
That is, $S(C)$ is a right coideal subalgebra. Applying $S^{-1}$ shows that the correspondence is bijective. Clearly the correspondence preserves any flatness properties.

(2) This follows immediately from (1).
\end{proof}

In the following lemma, we only need to assume that $k$ does not have characteristic 2.

\begin{lemma}\label{general}
Let $H$ be a connected Hopf $k$-algebra.
\begin{enumerate}
\item   There exists a $k$-basis $\mathcal{B}$ of $H$ such that $\mathcal{B} \cap H_n$ spans $H_n$ for all $n$, and, for all $n$ and all $b \in \mathcal{B} \cap H_n$, there exists $r_b \in H_{n-1}$ such that
$S(b)=\pm b+r_b.$
\item Let $h\in H_{n}$. There exists $r \in H_{n-1}$ such that $S^2(h)= h+r.$
\end{enumerate}
\end{lemma}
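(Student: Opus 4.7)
The plan is to prove (1) by transferring the problem to the associated graded Hopf algebra $\mathrm{gr}H$, which is commutative by the Sweedler theorem cited in $\S$\ref{coradfilt}, and then to deduce (2) as a short corollary.

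Because the coradical filtration is a Hopf filtration with $S(H_i) \subseteq H_i$, the antipode descends to a linear map $\bar S$ on each graded piece $\mathrm{gr}H(n) = H_n/H_{n-1}$, and these assemble into the antipode of $\mathrm{gr}H$. For any commutative Hopf algebra the antipode is an involution, so $\bar S^2 = \mathrm{id}$ and $\bar S$ restricts to a linear involution on each $\mathrm{gr}H(n)$. Since $\mathrm{char}(k) \neq 2$, this involution diagonalises: write $\mathrm{gr}H(n) = \mathrm{gr}H(n)^{+} \oplus \mathrm{gr}H(n)^{-}$ for its $\pm 1$-eigenspaces, and choose a basis $\overline{\mathcal{B}}_n$ of $\mathrm{gr}H(n)$ consisting of $\bar S$-eigenvectors. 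Lift each $\bar b \in \overline{\mathcal{B}}_n$ to some $b \in H_n$, producing a set $\mathcal{B}_n \subseteq H_n$. By construction the image of $S(b)$ in $H_n/H_{n-1}$ equals $\pm \bar b$, so there exists $r_b \in H_{n-1}$ with $S(b) = \pm b + r_b$. Taking $\mathcal{B} = \bigcup_{n\geq 0} \mathcal{B}_n$ gives a basis of $H$ whose intersection with $H_n$ is $\bigcup_{i\leq n} \mathcal{B}_i$, a basis of $H_n$; this establishes (1).

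For (2), expand $h \in H_n$ in the basis $\mathcal{B}\cap H_n$ produced in (1), say $h = \sum_b c_b b$, with $S(b) = \epsilon_b b + r_b$, $\epsilon_b \in \{\pm 1\}$, $r_b \in H_{n-1}$. Then
\[
S^2(b) \;=\; \epsilon_b S(b) + S(r_b) \;=\; \epsilon_b^2 b + \epsilon_b r_b + S(r_b) \;=\; b + (\epsilon_b r_b + S(r_b)),
\]
and since $r_b, S(r_b) \in H_{n-1}$ we obtain $S^2(b) \equiv b \pmod{H_{n-1}}$, hence $S^2(h) \equiv h \pmod{H_{n-1}}$.

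I do not anticipate a genuine obstacle. The crucial input is that $\mathrm{gr}H$ is commutative, which the paper has already recorded from Sweedler; the hypothesis $\mathrm{char}(k) \neq 2$ enters solely to permit the $\pm 1$-eigenspace decomposition, and this is precisely the assumption flagged in the statement. The only mild bookkeeping is to organise the lifted basis so that $\mathcal{B} \cap H_n$ actually spans $H_n$, which is immediate from combining bases of the successive quotients $\mathrm{gr}H(i)$ for $i \leq n$.
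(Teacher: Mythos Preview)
Your proof is correct and follows essentially the same approach as the paper's: pass to $\mathrm{gr}H$, use commutativity to get $\bar S^2 = \mathrm{id}$, diagonalise $\bar S$ on each graded piece (using $\mathrm{char}\,k \neq 2$), and lift an eigenbasis. The paper's version of (2) simply says ``follows immediately from (1)'', whereas you spell out the computation $S^2(b) = b + (\epsilon_b r_b + S(r_b))$, but the content is identical.
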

\begin{proof}
(1) First, $S$ induces the antipode $\overline{S}$ of $\mathrm{gr}H$. Since $\mathrm{gr}H$ is a commutative Hopf algebra by \cite[\S 11.2]{Sweedler}, $\overline{S}$ has order 2 by \cite[\textrm{Corollary 1.5.12}]{Mont}. Thus the result follows by constructing $\mathcal{B} \cap H_n$ by induction on $n$, the new elements at stage $n$ being lifts of a basis of $\overline{S}$-eigenvectors of $H(n)$.

(2) This follows immediately from (1).
\end{proof}

\begin{theorem}\label{antisquare}

Let $H$ be a connected Hopf $k$-algebra, with $k$ a field of characteristic 0, and let $T$ be a (left or right) quantum homogeneous space in $H$.
\begin{enumerate}
\item $S^2 (T) = T$.
 \item Either $(S^{2})_{|T} =\operatorname{id}$ or $|(S^2)_{|T}|=\operatorname{\infty}$.
\end{enumerate}
\end{theorem}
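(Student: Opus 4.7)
By Lemma \ref{leftright}, the right case follows from the left via $T \mapsto S(T)$; we therefore assume throughout that $T$ is a left quantum homogeneous space.

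For part (1), observe that $S^2$ is a Hopf algebra automorphism of $H$, so $S^2(T)$ is again a left coideal subalgebra of $H$, and hence by Proposition \ref{quantmasuoka}(2) is itself a left quantum homogeneous space. Under the Masuoka bijection of Proposition \ref{quantmasuoka}(3), $T$ corresponds to the quotient right $H$-module coalgebra $\pi_T : H \to H/T^+H$, while $S^2(T)$ corresponds to $\pi_{S^2(T)} : H \to H/S^2(T^+)H$. Part (1) therefore reduces to proving the equality $S^2(T^+)H = T^+H$, at which point the injectivity of the Masuoka correspondence forces $S^2(T) = T$. This is precisely where Koppinen's lemma enters: it expresses $S^2$ in terms of the comultiplication in a form that becomes trivial on passage to the right $H$-module coalgebra quotient $H/T^+H$, which yields the equality of the two right ideals.

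For part (2), granted (1), $S^2$ restricts to a $k$-algebra automorphism of $T$. By Lemma \ref{general}(2), $S^2$ preserves the coradical filtration $\{H_n\}$ and $(S^2-\operatorname{id})(H_n) \subseteq H_{n-1}$; intersecting with $T$ and invoking (1), we conclude that $S^2(T_n) = T_n$ and $(S^2|_T - \operatorname{id})(T_n) \subseteq T_{n-1}$ for every $n$, where $T_n := T \cap H_n$. Hence $S^2|_{T_n}$ is a unipotent automorphism of the finite-dimensional $k$-vector space $T_n$. Suppose now that $(S^2|_T)^m = \operatorname{id}_T$ for some $m \geq 1$. Writing $S^2|_{T_n} = \operatorname{id} + N_n$ with $N_n$ nilpotent, the relation $(\operatorname{id}+N_n)^m = \operatorname{id}$ rearranges as
$$ N_n \cdot \bigl( m\operatorname{id} + \tbinom{m}{2}N_n + \cdots + N_n^{m-1}\bigr) = 0. $$
The bracketed operator is invertible because $m$ is a unit in $k$ and the remaining summands are nilpotent, forcing $N_n = 0$ and thus $S^2|_{T_n} = \operatorname{id}$. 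Since $T = \bigcup_n T_n$, we obtain $S^2|_T = \operatorname{id}$, proving the stated dichotomy.

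The main obstacle is clearly part (1), and specifically the equality $S^2(T^+H) = T^+H$, which hinges on the correct formulation of Koppinen's lemma; part (2) is then a routine filtered-to-graded unipotent argument that goes through cleanly in characteristic zero.
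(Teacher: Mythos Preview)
Your argument for part (2) is correct and in fact slightly slicker than the paper's. The paper picks a single element $h\in T_n$ of minimal degree with $S^2(h)\neq h$, writes $S^2(h)=h+r$ with $0\neq r\in T_{n-1}$ fixed by $S^2$, and iterates to get $S^{2m}(h)=h+mr$. Your unipotent-operator version packages the same idea more uniformly. One correction: you call $T_n$ finite-dimensional, but the theorem as stated does \emph{not} assume $\GK H<\infty$, so this need not hold. Fortunately your argument never uses it --- $N_n^{n+1}=0$ follows from the filtration alone, and $m\,\mathrm{id}+(\text{nilpotent})$ is invertible on any vector space --- so simply drop that adjective.

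Part (1), however, has a genuine gap. You correctly reduce to proving $S^2(T^+)H = T^+H$ and then invoke Koppinen's lemma, describing it as a result that ``expresses $S^2$ in terms of the comultiplication in a form that becomes trivial on passage to $H/T^+H$.'' That is not what Koppinen's lemma says, and no such identity for $S^2$ is available here. Koppinen's lemma \cite[Lemma~3.1]{Ko} is a statement about the one-sided ideal generated by $T^+$: in the form used in the paper it yields $S(T^+H)=HT^+$. It says nothing about $S^2$ directly, and your attempted one-step passage from $T$ to $S^2(T)$ does not go through: knowing only that $S^2$ is a Hopf automorphism of $H$, the equality $S^2(T^+H)=T^+H$ is essentially equivalent to what you are trying to prove.

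The paper's proof therefore does \emph{not} jump straight to $S^2(T)$ but passes through the \emph{right} coideal subalgebra $S(T)$. Under the right-handed version of the Masuoka bijection (Proposition~\ref{quantmasuoka}(4)), $S(T)$ corresponds to $H/H(S(T))^+$, and one computes
\[
H(S(T))^+ \;=\; HS(T^+) \;=\; S(T^+H) \;=\; HT^+,
\]
the last equality being Koppinen. Applying $S$ once more to the pair $\{S(T),\,HT^+\}$ then shows that the left coideal subalgebra $S^2(T)$ is paired with $H/T^+H$, and injectivity of the left-handed Masuoka bijection gives $S^2(T)=T$. The missing ingredient in your write-up is precisely this two-step passage through $S(T)$ together with the correct content of Koppinen's lemma.
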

\begin{proof}
(1) Suppose for definiteness that $T$ is a left coideal subalgebra of $H$. Under the bijective correspondence of Proposition \ref{quantmasuoka}(2), $T\leftrightarrow H/T^+ H$, and the right coideal subalgebra $S(T)$ corresponds to $H/H(S(T)^+)$. But
$$ H(S(T)^+) = HS(T^+) = S(T^+ H) = HT^+,$$
where the final equality is Koppinen's lemma, \cite[\textrm{Lemma 3.1}]{Ko}. Thus, applying $S$ now to the pair $\{ S(T), HS(T)^+ \}$, it follows that the left coideal subalgebra $S^2(T)$ is paired with $H/S(HT^+) = H/T^+H$. By the bijectivity of the correspondence of Proposition \ref{quantmasuoka}(3), we must have $S^2(T) = T$ as required.

(2) Suppose that $(S^{2})_{|T} \neq \operatorname{id}$, and choose $h \in T_n$ with $n$ minimal such that $S^2(h) \neq h$. By Lemma \ref{general}(2) there exists $0 \neq r \in H_{n-1}$ such that $S^2(h) = h + r$. By the first part of the theorem, $r \in T$. Hence, by choice of $n$, $S^2(r) = r$, so that, for all $m \geq 1$,
$$ S^{2m}(h) = h + mr.$$
As $k$ has characteristic 0, $S^{2m}(h) \neq h$ for every $m \geq 1$, as required.
\end{proof}

Note that part (1) of Theorem \ref{antisquare} holds, with the same proof, for any \emph{pointed} Hopf algebra $H$ and left coideal subalgebra $T\subset H$ with $S(T_{0})=T_{0}$. This is because Proposition \ref{quantmasuoka} is valid in this wider context. But things go wrong with part (2): consider for instance Sweedler's 4-dimensional pointed Hopf algebra \cite[\textrm{Example 1.5.6}]{Mont},
\[
K:=k\langle g, x, : g^{2}=1, x^{2}=0, xg=-gx\rangle.
\]
with $\Delta(g)=g\otimes g,\, \Delta(x)=x\otimes 1+g\otimes x, \,\epsilon(g)=1, \, \epsilon(x)=0, \, S(g)=g$ and $S(x)=-gx$. Here, $|S|=4$.

In part (2) of the theorem, both alternatives can occur. Of course $S^2 = \mathrm{Id}$ when $H$ is commutative or cocommutative, \cite[\textrm{Corollary 1.5.12}]{Mont}. On the other hand, as we illustrate in $\S$\ref{smallHopf}, there are connected Hopf algebras of GK-dimension 3 containing quantum homogeneous spaces $T$ of GK-dimension 2 with $|S^{2}_{|T}| = \infty$.

\bigskip

\section{Examples of quantum homogeneous spaces of connected Hopf algebras}


\subsection{Coideals in commutative connected Hopf algebras}\label{commutative}
Recall that $k$ is algebraically closed of characteristic 0, so that $H$ is a commutative affine Hopf $k$-algebra if and only if $H = \mathcal{O}(G)$ for an affine algebraic $k$-group $G$, \cite[\S 1.4]{Wa}.

\begin{theorem} \label{laz} Let $H = \mathcal{O}(G)$ as above, with $\mathrm{dim}G = n.$ Then the following are equivalent:
\begin{enumerate}
\item $H \cong k[X_1, \ldots , X_n]$ as algebras;
\item $G$ is unipotent;
\item $H$ is connected.
\end{enumerate}
\end{theorem}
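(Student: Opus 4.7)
The strategy is to establish the two equivalences $(2)\Leftrightarrow (3)$ and $(1)\Leftrightarrow (2)$. The second is purely geometric and is the content of the classical reference \cite{laz}; the first is the Hopf-algebraic reformulation that makes the theorem useful in the sequel.

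For $(2)\Leftrightarrow (3)$, the hinge is the standard correspondence between finite-dimensional subcoalgebras of $H=\mathcal{O}(G)$ and finite-dimensional rational $G$-modules, under which a simple subcoalgebra of dimension $r^2$ corresponds to an irreducible rational $G$-module of dimension $r$. For $(3)\Rightarrow (2)$ I would fix a faithful finite-dimensional rational $G$-module $V$ (which exists since $G$ is affine algebraic); the matrix coefficient subcoalgebra $C_V\subseteq H$ satisfies $(C_V)_0 \subseteq H_0 = k$, so every composition factor of $V$ is the trivial module, $G$ consequently acts by unitriangular matrices on $V$, and faithfulness embeds $G$ in a unipotent group. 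Conversely, for $(2)\Rightarrow (3)$, if $G$ is unipotent then every finite-dimensional rational $G$-module has only trivial composition factors, every simple subcoalgebra of $H$ is $k\cdot 1$, and therefore $H_0=k$.

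For $(1)\Leftrightarrow (2)$, which is the classical theorem of \cite{laz}: the direction $(2)\Rightarrow (1)$ uses that nilpotence of $\mathfrak{g}=\mathrm{Lie}(G)$ truncates the exponential and logarithm series, producing mutually inverse algebraic isomorphisms $\exp:\mathfrak{g}\to G$ and $\log:G\to\mathfrak{g}$, from which one reads off $\mathcal{O}(G)\cong\mathcal{O}(\mathfrak{g})\cong k[X_1,\ldots,X_n]$. The direction $(1)\Rightarrow (2)$ combines the Levi decomposition $G=G_u\rtimes L$ (valid in characteristic $0$) with the fact that $\mathcal{O}(G)^\times=k^\times$ to force $L$ to be semisimple (its character group, equivalently $L/[L,L]$, being trivial), and then invokes the deeper structural input of \cite{laz} that no non-trivial semisimple algebraic $k$-group can be isomorphic as a variety to affine space.

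The main obstacle is $(1)\Rightarrow (2)$: extracting the structural statement of unipotence from the mere algebra-level information that $H$ is a polynomial ring requires substantive input from the structure theory of affine algebraic groups in characteristic $0$, and is essentially the reason the reference \cite{laz} features in the theorem statement.
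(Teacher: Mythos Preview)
The paper's proof consists entirely of citations: $(1)\Rightarrow(2)$ is Lazard's theorem \cite{Lazard}, and both $(2)\Rightarrow(1)$ and $(2)\Leftrightarrow(3)$ are referred to \cite[Theorem 8.3]{Wa}. Your proposal goes further by sketching the underlying arguments; the sketches for $(2)\Leftrightarrow(3)$ (via the coefficient-coalgebra/rational-module correspondence) and for $(2)\Rightarrow(1)$ (via the truncated $\exp/\log$) are correct and standard, and indeed constitute what lies behind the Waterhouse reference.

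Your outline of $(1)\Rightarrow(2)$, however, has a gap. After the Levi decomposition $G = G_u \rtimes L$ and the reduction of $L$ from reductive to semisimple using $\mathcal{O}(G)^\times = k^\times$, you wish to invoke the fact that no nontrivial semisimple group is isomorphic as a variety to affine space, in order to force $L = 1$. But you have not shown that $L$ itself is affine space: what you know is that $G \cong G_u \times L$ as varieties is isomorphic to $\mathbb{A}^n$, with $G_u \cong \mathbb{A}^m$, and deducing $L \cong \mathbb{A}^{n-m}$ from this is an instance of the Zariski cancellation problem, not a triviality. Since both you and the paper ultimately defer $(1)\Rightarrow(2)$ to Lazard's paper, this is a flaw in your gloss on that reference rather than in the overall logical structure; but as written, your reduction does not actually arrive at the hypothesis of the statement you want to cite.
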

\begin{proof} (1)$\Rightarrow$(2): This is Lazard's theorem, \cite{Lazard}.

(2)$\Rightarrow$(1): \cite[\textrm{Theorem 8.3}]{Wa}.

(2)$\Leftrightarrow$(3): \cite[\textrm{Theorem 8.3}]{Wa}.
\end{proof}

Now we review the homogeneous $G$-spaces for $G$ as in the above theorem. $H^{\mathrm{co}\pi}$ and $^{\mathrm{co}\pi}H$ have been defined in \ref{coideal}

\begin{theorem} \label{unipotent} Let $G$ be an affine unipotent algebraic $k$-group of dimension $n$, and let $T$ be a closed subgroup of $G$ with $\mathrm{dim}T = m$. Let $H = \mathcal{O}(G)$ and let $I$ be the defining ideal of $T$, so $I$ is a Hopf ideal of $H$ with $H/I \cong \mathcal{O}(T)$. Let $\pi : H \rightarrow H/I$ be the canonical epimorphism.
\begin{enumerate}
\item Let $K = H^{\mathrm{co}\pi}$ and $W =\, ^{\mathrm{co}\pi}H$. Then $K$ [resp. $W$] is a left [resp.right] coideal subalgebra of $H$, with $W = S(K)$ and $K = S(W)$.
\item $K$ is a Hopf subalgebra of $H \Leftrightarrow K = W \Leftrightarrow T$ is normal in $G\Leftrightarrow K^+H$ is a conormal ideal of $H$.
\item $K$ and $W$ are polynomial subalgebras of $H$ in $n-m$ variables, with
$$ H = K[t_1, \ldots , t_m] = W[t_1, \ldots ,t_m] $$
for suitable elements $t_i$ of $H$, $1 \leq i \leq m$.
\item Every coideal subalgebra of $H$ arises as above. Namely, if $K$ is a left coideal subalgebra of $H$ of dimension $n-m$, then $K^+ H$ is a Hopf ideal of $H$, so $H = \mathcal{O}(T)$ for a closed subgroup $T$ of $G$ with $\mathrm{dim} T = m$, and $K = H^{\mathrm{co}\pi}$ where $\pi :H \rightarrow H/K^+H$.
\end{enumerate}
\end{theorem}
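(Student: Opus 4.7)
My plan is to read each part of the theorem through its geometric content---$K = \mathcal{O}(G/T)$ and $W = \mathcal{O}(T\backslash G)$---and then derive the formal statements by combining Proposition \ref{quantmasuoka}, Lemma \ref{leftright}, Lazard's theorem (Theorem \ref{laz}), and the classical fact that $G \to G/T$ admits a regular section for unipotent $G$ (Rosenlicht's theorem).

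For (1), $K = H^{\mathrm{co}\pi}$ and $W = {}^{\mathrm{co}\pi}H$ are coideal subalgebras by the general definitions in $\S$\ref{coideal}. To obtain $S(K) = W$, I would apply $\tau \circ (S \otimes S)$ to the defining identity $\sum h_{(1)} \otimes \pi(h_{(2)}) = h \otimes \pi(1)$ for $h \in K$; since $I$ is a Hopf ideal, $\pi$ is a Hopf algebra map, so $\pi \circ S = S_{H/I} \circ \pi$, and the transformed identity is exactly the condition for $S(h)$ to lie in $W$. The reverse inclusion is symmetric, and since $H$ is commutative (hence $S^{2}=\mathrm{id}$), this also yields $K = S(W)$.

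Parts (2) and (3) then assemble classical ingredients. For (2), Lemma \ref{leftright}(2) combined with (1) gives $K$ is a Hopf subalgebra $\Leftrightarrow S(K) = K \Leftrightarrow K = W$; geometrically $K = \mathcal{O}(G/T) = \mathcal{O}(T\backslash G) = W$ holds exactly when the right and left $T$-cosets in $G$ coincide, i.e.\ when $T$ is normal. The equivalence with conormality of $K^+H$ is the dual translation under Proposition \ref{quantmasuoka}(3): the condition that $H/K^+H$ be a Hopf-algebra quotient compatible with the inclusion $K \hookrightarrow H$ is the standard encoding of $K$ being a normal Hopf subalgebra. For (3), Theorem \ref{laz} applied to $G$, $T$, and the unipotent quotient $G/T$ shows that $H$, $\mathcal{O}(T)$, and $K \cong \mathcal{O}(G/T)$ are polynomial $k$-algebras in $n$, $m$, and $n-m$ variables respectively. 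Rosenlicht's section theorem then yields a variety isomorphism $G \cong (G/T) \times T$, so $H \cong K \otimes_k \mathcal{O}(T)$ as $K$-algebras, whence $H = K[t_1, \dots, t_m]$ for any lifts $t_i$ of polynomial generators of $\mathcal{O}(T)$. The parallel statement for $W$ follows by applying $S$ and using $S^2 = \mathrm{id}$.

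For (4), the central task is to verify that $J := K^+H$ is a Hopf ideal; granted this, Proposition \ref{quantmasuoka}(3) identifies $K = H^{\mathrm{co}\pi}$ for $\pi : H \to H/J$, and $H/J$ is then a finitely generated commutative Hopf algebra of Krull dimension $m$ (using freeness of $H$ over $K$ from Proposition \ref{quantmasuoka}(2)), hence of the form $\mathcal{O}(T)$ for a closed subgroup $T \subseteq G$ with $\dim T = m$. The biideal property follows by a direct calculation: for $x \in K^+$, writing $\Delta(x) \in H \otimes K$ and splitting the second factor along $k \cdot 1 \oplus K^+$ via $(\mathrm{id} \otimes \epsilon) \Delta(x) = x$ yields $\Delta(x) = x \otimes 1 + \sum b_i \otimes y_i$ with $y_i \in K^+$; commutativity makes $K^+H$ a two-sided ideal, and this propagates $\Delta(K^+) \subseteq K^+ \otimes H + H \otimes K^+$ to $\Delta(K^+H) \subseteq K^+H \otimes H + H \otimes K^+H$. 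The main obstacle will be verifying $S(J) \subseteq J$. My plan is to exploit the antipode identity $\sum x_{(1)} S(x_{(2)}) = \epsilon(x) = 0$ for $x \in K^+$: substituting the decomposition above gives $x = -\sum b_i S(y_i)$; applying $S$ and using $S^2 = \mathrm{id}$ (valid by commutativity of $H$, \cite[Corollary 1.5.12]{Mont}) then yields $S(x) = -\sum y_i S(b_i) \in K^+ H = J$, completing the proof that $J$ is a Hopf ideal.
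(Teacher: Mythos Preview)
Your proof is correct and follows the same overall architecture as the paper's: Masuoka's correspondence (Proposition \ref{quantmasuoka}) for parts (1) and (4), the reference to \cite[3.4]{Mont} for (2), and Lazard's theorem together with Rosenlicht for (3). There are two places where your route differs in detail. For (1), the paper obtains $W = S(K)$ as a byproduct of the Koppinen-lemma argument underlying Theorem \ref{antisquare}(1), whereas you compute it directly from the coinvariant identity using that $\pi$ commutes with the antipodes; your argument is cleaner in this commutative setting precisely because the quotient $H/I$ is a genuine Hopf algebra rather than merely a module coalgebra. For (4), the paper simply invokes Proposition \ref{quantmasuoka}(3), which on its face only yields that $H/K^+H$ is a quotient right $H$-module coalgebra; the Hopf-ideal claim then rests on the implicit observation that this quotient, being a connected commutative bialgebra, is automatically a Hopf algebra with $\pi$ a Hopf map. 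Your explicit computation of $S(K^+) \subseteq K^+H$ via $S^2 = \mathrm{id}$ is a valid and self-contained alternative that avoids this extra step.
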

\begin{proof} That $K = S(W)$ is a special case of the proof of Theorem \ref{antisquare}(1). The rest of (1), and (4), are special cases of Proposition \ref{quantmasuoka}, although of course they were known much earlier.

For part (3), note first that $I = K^+ H = W^+ H$ by (4), and $H/I \cong k[z_, \ldots ,z_m]$ by Theorem \ref{laz}. That the extensions $K \subset H$ and $W \subset H$ split as stated now follows from basic commutative algebra, taking $t_1, \dots, t_m$ to be any lifts of a set of polynomial generators from $H/I$ to $H$. The structure of $K$ and $W$ is a theorem of Rosenlicht, \cite[Theorem 1]{Rosenlicht}, but can also be seen easily by exploiting the basic properties of unipotent groups as in the proof of Theorem \ref{flag} below.

For (2), see \cite[3.4.2 and 3.4.3]{Mont}.
\end{proof}

\begin{theorem} \label{flag}Continue with the notation and hypotheses of the previous theorem for $H$ and $G$.
\begin{enumerate}
\item Let $K$ be a left coideal subalgebra of $H$, of dimension $n-m$. Then there is a complete flag
\begin{equation}\label{flaglike} k = K_0 \subset K_1 \subset \cdots \subset K_{n-m} = K \subset \cdots  \subset K_n = G, \end{equation}
where the $K_j$ are left coideal subalgebras of $H$ with $\mathrm{dim} K_j = j$ for all $j = 0, \dots , n$.
\item If $K$ is a Hopf subalgebra of $H$ then the $K_j$ in (\ref{flag}) can be chosen to be Hopf subalgebras.
\item Let each of $K$ and $L$ be a left or right coideal subalgebra of $H$, with $L \subseteq K$. Then
$$ \GK K = \GK L \Leftrightarrow K = L. $$
\end{enumerate}
\end{theorem}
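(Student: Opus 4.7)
All three parts will be proved by translating to the category of closed subgroups of the unipotent algebraic $k$-group $G$ via Theorem \ref{unipotent}(4): a left coideal subalgebra $K$ with $\GKdim K = n-m$ corresponds to a closed subgroup $T_K \leq G$ of dimension $m$, cut out by the Hopf ideal $K^+H$, and every such $T_K$ is connected, since unipotent algebraic groups in characteristic $0$ are affine space as varieties (Theorem \ref{laz}). A coideal inclusion $L \subseteq K$ corresponds to the reverse subgroup inclusion $T_K \subseteq T_L$, and $\GKdim K = n - \dim T_K$. The right-coideal analogue follows by applying the antipode via Lemma \ref{leftright}(1), which preserves inclusions and Gel'fand--Kirillov dimension.

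For (1), given $T \leq G$ with $\dim T = m$, I must produce a chain of closed subgroups $\{1\} = T_n \subsetneq T_{n-1} \subsetneq \cdots \subsetneq T_{n-m} = T \subsetneq \cdots \subsetneq T_0 = G$ with $\dim T_j = n-j$, whereupon the required $K_j$ are the associated left coideal subalgebras. The central tool is the Lie-algebraic normalizer condition: if $\mathfrak{t}$ is a proper Lie subalgebra of the nilpotent Lie algebra $\mathfrak{g} = \mathrm{Lie}(G)$, then $\mathfrak{n}_{\mathfrak{g}}(\mathfrak{t}) \supsetneq \mathfrak{t}$. Selecting a $1$-dimensional subspace of $\mathfrak{n}_{\mathfrak{g}}(\mathfrak{t})/\mathfrak{t}$, lifting to a subalgebra $\mathfrak{t}' \supsetneq \mathfrak{t}$ in which $\mathfrak{t}$ is an ideal, and exponentiating via Baker--Campbell--Hausdorff (convergent in characteristic $0$ because $\mathfrak{g}$ is nilpotent) produces a closed overgroup $T' \supsetneq T$ with $\dim T' = \dim T + 1$. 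Iterating this upward yields the portion of the chain between $T$ and $G$, and applying the same procedure inside the unipotent group $T$ itself fills in the portion below $T$. For (2), the added hypothesis that $K$ is a Hopf subalgebra forces $T \triangleleft G$ (Theorem \ref{unipotent}(2)), and each $T_j$ must now be normal in $G$. Upward, pull back a complete normal flag of the unipotent quotient $G/T$ to $G$. Downward, use that every finite-dimensional representation of the unipotent group $G$ has a non-zero fixed vector; applied to the adjoint action on $\mathrm{Lie}(T_j)$, this gives a $1$-dimensional $G$-stable Lie ideal whose exponential is a $1$-parameter subgroup of $T_j$ normal in $G$, and one recurses.

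For (3), let $L \subseteq K$ with $\GKdim L = \GKdim K = d$. The dictionary produces closed subgroups $T_K \subseteq T_L$ of the irreducible variety $G$ with $\dim T_K = n-d = \dim T_L$; two closed, irreducible subvarieties of equal dimension with one contained in the other must coincide, so $T_K = T_L$, and hence $L = K$ by the bijectivity of the dictionary. \textbf{Main obstacle.} The substantive technical input is the subgroup-flag-refinement arguments underlying (1) and (2), which are classical but non-trivial consequences of the structure theory of nilpotent algebraic groups in characteristic zero (together, for (2), with the auxiliary fact that unipotent groups act on finite-dimensional representations with non-zero fixed vectors); once these are in hand, (3) is an immediate dimension count.
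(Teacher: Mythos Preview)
Your proposal is correct and follows essentially the same strategy as the paper: translate via Theorem \ref{unipotent}(4) to closed subgroups of the unipotent group $G$ and invoke the standard structure theory of unipotent groups in characteristic $0$. The paper's proof is terser, simply citing two facts from \cite[\S 17.5]{Humphreys} --- the existence of a complete chain of closed normal subgroups $1 = G_0 \subset \cdots \subset G_n = G$, and the strict normaliser inequality $\dim N_G(T) > \dim T$ for $T \subsetneq G$ closed --- and declaring that (1), (2) and (3) follow by dualising. Your argument unpacks the same content at the Lie-algebra level (nilpotent normaliser condition plus the exponential/BCH dictionary), which is equivalent in characteristic $0$ and arguably more transparent. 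For (3) you use instead the elementary fact that an inclusion of irreducible closed subvarieties of equal dimension is an equality; the paper derives (3) from the normaliser inequality. Both are valid. One small point: for the mixed left/right case in (3) you need not invoke the antipode --- since $H$ is commutative, $K^{+}H = HK^{+}$ regardless of sidedness, so the associated closed subgroup is insensitive to whether $K$ is a left or a right coideal subalgebra, which is exactly how the paper handles it.
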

\begin{proof} (1) and (2): By Theorem \ref{unipotent}(4), there is a closed subgroup $T$ of $G$, with $\mathrm{dim}T = m$, whose defining ideal is $K^+H$, and $K = H^{\mathrm{co}\pi}$ where $\pi: H \rightarrow H/K^+ H.$ The theorem follows by dualising the following standard facts about a unipotent group $G$ in characteristic 0, see for example \cite[\S 17.5]{Humphreys}:
\begin{itemize}
\item $G$ has a finite chain of normal subgroups $1 = G_0 \subset G_1 \subset \cdots \subset G_n = G$ with $\mathrm{dim}G_i = i$ and $G_{i+1}/G_i \cong (k,+)$ for all $i$;
\item if $T$ is a closed subgroup of $G$ with $T \neq G$ then the normaliser $N_G(T)$ of $G$ is closed, and $\mathrm{dim} N_G(T) > \mathrm{dim}T$.
\end{itemize}

(3) The construction of $T$ from $K$, and of the analogous closed subgroup $B$ of $G$ from $L$, does not depend on whether $K$ and $L$ are right or left coideal subalgebras. In all cases $T \subseteq B$ and (3) follows from the second of the above bullet points.
\end{proof}

\subsection{Cocommutative coideals in connected Hopf algebras}\label{cocommutative}

Let $C$ be a cocommutative left quantum homogeneous space in a connected Hopf algebra $H$. Then cocommutativity ensures that $C$ is in fact a connected sub-bialgebra of $H$, and so a connected Hopf subalgebra, \cite[\textrm{Lemma 5.2.10}]{Mont}. Thus one can apply the Cartier-Gabri$\grave{\mathrm{e}}$l-Kostant theorem  \cite[\textrm{Theorem 5.6.5}]{Mont}, characterising cocommutative connected Hopf algebras over a field of characteristic 0; we find that
$$ C \textit{ is isomorphic as a Hopf algebra to } \mathcal{U}(\mathfrak{g}),$$
where $\mathfrak{g}=P(C)$ is the Lie algebra of primitive elements of $C$. Conversely, of course, each subalgebra of the Lie algebra $P(H)$ gives rise to a cocommutative coideal subalgebra of $H$.

\subsection{Commutative coideals in connected Hopf algebras}\label{commutative}

Let $C$ be a commutative left quantum homogeneous space in a connected Hopf algebra $H$ of finite GK-dimension $n$. Then $C$ has finite GK-dimension. By Theorem \ref{Tissiglan} below there exists some $m\in \mathbb{Z}_{\geq 0}$ such that $m \leq \mathrm{GKdim}\mathrm{gr}H = n$ and $\mathrm{gr}C \cong k[X_1, \ldots , X_m]$. Moreover, $\GK{C}=\GK{\gr C}=m$,  with $m=n$ if and only if $C = H$. In particular, $C$ is affine, generated by any choice of lifts of the graded generators $X_i$ to $C$. Thus $C$, being commutative, is isomorphic to a factor of the polynomial $k$-algebra $R$ in $m$ variables. However, proper factors of $R$ have GK-dimension strictly less than $m$, so $C \cong R$. We have proved:

\begin{proposition}\label{commute} Let $C$ be a commutative left quantum homogeneous space in a connected Hopf $k$-algebra $H$ with $\mathrm{GKdim}H = n < \infty.$ Then $C$ is a polynomial algebra in $m$ variables, where $m \leq n$ and $m=n$ if and only if $C = H$.
\end{proposition}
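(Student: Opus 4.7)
The plan is to deduce everything directly from Theorem \ref{firstmain}, together with an elementary GK-dimension argument for commutative affine algebras. By parts (2) and (3) of Theorem \ref{firstmain} applied to the left coideal subalgebra $C$, the graded algebra $\gr C$ is a polynomial ring $k[\overline{x}_1, \ldots , \overline{x}_m]$ for some $m \le n$, with $\GK C = m$, and $m = n$ if and only if $C = H$. The conclusion about $m = n$ is therefore already handed to us; only the polynomial structure of $C$ itself requires further work.

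Next I would lift. Pick $x_i \in C$ mapping to $\overline{x}_i$ in the associated graded, and verify by induction on the coradical degree that $x_1, \ldots , x_m$ generate $C$ as a $k$-algebra; this is the standard filtered-graded argument, using that each $C(i)$ is finite-dimensional (which in turn comes from finiteness of $\GK C$). In particular $C$ is an affine commutative $k$-algebra, and there is a surjective $k$-algebra homomorphism
\[
\phi\colon R := k[Y_1, \ldots , Y_m] \twoheadrightarrow C, \qquad Y_i \mapsto x_i.
\]

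Finally, I would invoke the fact that for a commutative affine $k$-algebra GK-dimension agrees with Krull dimension, and so strictly drops under proper quotients of $R$. Since $\GK C = m = \GK R$, the map $\phi$ must be injective, giving $C \cong R$ as required.

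There is no real obstacle: the substantive input is Theorem \ref{firstmain}, which is already available, and the remaining steps are standard. The only point to be a little careful about is the lifting of generators from $\gr C$ to $C$, which needs finiteness of the Hilbert series of $\gr C$ in each degree so that the inductive lift actually terminates in finitely many generators; this is automatic here because $\gr C$ is a finitely generated polynomial ring.
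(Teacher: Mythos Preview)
Your proof is correct and follows essentially the same route as the paper: invoke Theorem \ref{firstmain} (equivalently Lemma \ref{Tissiglan}) to get $\gr C \cong k[X_1,\dots,X_m]$ with $\GK C = m$ and the $m=n$ characterisation, lift the generators to see $C$ is affine commutative and hence a quotient of a polynomial ring in $m$ variables, then use that proper quotients have strictly smaller GK-dimension. The paper is slightly more terse about the lifting step and the GK-drop argument, but the logic is identical.
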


\subsection{Quantum homogeneous spaces of small Gel'fand-Kirillov dimension}\label{tiny}Let $C$ be a left quantum homogeneous space in a connected Hopf $k$-algebra $H$, with $\mathrm{GKdim}H < \infty.$ We note here that, in dimensions 0 and 1, only classical homogeneous spaces occur in such a Hopf algebra; but a non-classical example occurs already with Gel'fand-Kirillov dimension 2.

(1) Suppose $\mathrm{GKdim}(C) = 0.$ Since $k$ is algebraically closed of characteristic 0, $H$ is a domain by \cite[Theorem 6.6]{Zhuang}. Since $C$ is locally finite dimensional, $C = k$ is the only possibility.

(2) Suppose $\mathrm{GKdim}(C) = 1.$ By Lemma \ref{primitivecoideal}, there exists $0 \neq c \in C \cap P(H)$. Thus $C$ contains the Hopf subalgebra $k[c]$ of $H$. It is  an immediate consequence of Proposition $\ref{chain lemma}$ below that in fact $k[c] = C$, so that:
\begin{align*} &\textit{ the only quantum homogeneous space } C \textit{ with } \mathrm{GKdim}C = 1 \textit{ in a}\\ &\textit{connected Hopf }
\textit{algebra } H \textit{ with } \mathrm{GKdim}H < \infty
\textit{ is } C = k[c] \textit{ with } c \textit{ primitive.}
\end{align*}

(3) Suppose $\mathrm{GKdim}(C) = 2.$ Then, in addition to the classical examples, namely the enveloping algebras of the two $k$-Lie algebras of dimension 2 with the usual cocommutative coproduct, one finds that the Jordan plane
\begin{equation}\label{Jordan} J = k\langle X,Y : [X,Y] = Y^2 \rangle \end{equation}
occurs as a coideal subalgebra of a connected Hopf algebra of Gel'fand-Kirillov dimension 3. The details are given in \S \ref{smallHopf} below. In particular,

\begin{proposition} \label{Jordanprop} There exists a quantum homogeneous space of a connected Hopf algebra of finite GK-dimension whose underlying algebra does not admit any structure as Hopf algebra. \end{proposition}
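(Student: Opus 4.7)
The statement has two parts: (i) the Jordan plane $J$ embeds as a left (or right) coideal subalgebra of some connected Hopf algebra of finite Gel'fand-Kirillov dimension, and (ii) $J$ itself admits no Hopf algebra structure.

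For part (i), I would construct an explicit connected Hopf algebra $H$ with $\GK H = 3$ containing $J$ as a left coideal subalgebra. The natural template is $H = k\langle X, Y, Z\rangle$ with $Y$ and $Z$ primitive and $\Delta(X) - 1\otimes X - X\otimes 1 \in H_1\otimes H_1$ a specific simple tensor involving $Y$ and $Z$ (for instance $Z\otimes Y$, or the symmetric choice $Z\otimes Y + Y\otimes Z$). Matching bidegree $(1,1)$ coefficients on the two sides of $[\Delta(X),\Delta(Y)] = \Delta(Y)^{2}$ forces a bracket relation of the form $[Z,Y]\in k\,Y$, and a further relation for $[X,Z]\in H$ is then determined by $\Delta([X,Z]) = [\Delta(X),\Delta(Z)]$. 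After fixing these relations, one checks that the coradical filtration of $H$ satisfies $Y, Z\in H_1$, $X\in H_2$, and yields $\gr H \cong k[x,y,z]$ as a polynomial ring; by Theorem \ref{beautiful} this implies that $H$ is connected of Gel'fand-Kirillov dimension $3$. The subalgebra $J = k\langle X,Y\rangle\subset H$ is then a left coideal subalgebra by the form of the coproduct.

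For part (ii), I argue by a direct bidegree analysis. Suppose $(J,\Delta,\epsilon,S)$ is a Hopf structure on $J$. The algebra-map identity $\epsilon([X,Y]) = \epsilon(Y^{2})$ gives $\epsilon(Y)^{2} = 0$, so $\epsilon(Y) = 0$. The substitution $X\mapsto X-\epsilon(X)$ is an algebra automorphism of $J$ since $[X-c,Y] = [X,Y] = Y^{2}$, so we may assume $\epsilon(X) = 0$ as well. Write
\[
\Delta(X) = X\otimes 1 + 1\otimes X + S, \qquad \Delta(Y) = Y\otimes 1 + 1\otimes Y + R,
\]
with $R,S\in J^{+}\otimes J^{+}$, and filter $J$ by total degree with $\deg X = \deg Y = 1$. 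Because the Jordan relation is homogeneous of degree $2$, $J$ is a graded algebra with respect to this filtration, and a direct computation on the PBW basis $\{Y^{a}X^{b}\}$ (using $[X,Y^{a}X^{b}] = a\,Y^{a+1}X^{b}$ and the analogous formula for $[Y^{a}X^{b},Y]$) shows that for every $f\in J^{+}$ the commutators $[X,f]$ and $[f,Y]$ either vanish or have total degree at least $2$. Consequently every contribution to $[\Delta(X),\Delta(Y)]$ has bidegree $(a,b)$ with $a\geq 2$ or $b\geq 2$, so its bidegree $(1,1)$ component vanishes; whereas
\[
\Delta(Y)^{2} = Y^{2}\otimes 1 + 2\,Y\otimes Y + 1\otimes Y^{2} + \text{(bidegree} \geq (2,1) \text{ or } (1,2) \text{ terms)}
\]
contributes $2\,Y\otimes Y\neq 0$ in bidegree $(1,1)$. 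Equating bidegree $(1,1)$ components of $[\Delta(X),\Delta(Y)] = \Delta(Y)^{2}$ yields the required contradiction.

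The main obstacle lies in part (i): pinning down the third defining relation for $H$ and verifying both the Hopf axioms and the polynomial structure of $\gr H$. Part (ii) is the content of the paper's remark that the absence of a Hopf structure on $J$ is ``not hard to see'', reducing to straightforward degree bookkeeping that exploits the quadratic homogeneity of the Jordan relation.
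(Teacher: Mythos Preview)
Your argument for part (ii) is correct, but the paper takes a different and more structural route: it observes that the two-sided ideal generated by commutators in $J$ is $Y^{2}J$, so the abelianisation $J^{\mathrm{ab}}=J/\langle [J,J]\rangle$ is $k[X,Y]/\langle Y^{2}\rangle$. Since abelianisation carries Hopf algebra structures to Hopf algebra structures, if $J$ were a Hopf algebra then $J^{\mathrm{ab}}$ would be a commutative Hopf $k$-algebra; but commutative Hopf algebras over a field of characteristic $0$ are semiprime (Cartier's theorem, \cite[Theorem 11.4]{Wa}), contradicting the nilpotence of the image of $Y$. Your bidegree calculation pinpoints the same obstruction at the level of $\Delta$ --- the homogeneous relation $[X,Y]=Y^{2}$ forces $\Delta(Y)^{2}$ to have a $(1,1)$-piece $2\,Y\otimes Y$ which no commutator can supply --- but the paper's argument packages this as a categorical statement and a citation rather than a component check. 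Each has its merits: your approach is self-contained and avoids invoking Cartier, while the paper's generalises immediately to any algebra whose abelianisation is non-reduced. For part (i), the paper does not build $H$ from scratch as you propose, but simply cites Zhuang's family $B(\lambda)$ (already known to be connected of GK-dimension $3$) and notes via Proposition~\ref{Blambda} that the left coideal subalgebra $L_{\infty}=k\langle Y,Z\rangle$, where $[Z,Y]=\tfrac{1}{2}Y^{2}$, is isomorphic to the Jordan plane; your sketched construction would, once the third relation is pinned down, essentially rediscover $B(\lambda)$.
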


To prove this, one calculates that $\langle [J,J] \rangle = Y^2 J$, so that the abelianisation $J^{\mathrm{ab}}$ of $J$ is $k[X,Y]/\langle Y^2\rangle$. Since, as is easily checked, the abelianisation of a Hopf algebra is always a Hopf algebra, and commutative Hopf algebras in characteristic 0 are semiprime \cite[\textrm{Theorem 11.4}]{Wa}, the claim follows. This suggests an obvious project: classify the connected quantum homogeneous spaces of Gel'fand-Kirillov dimension 2. This is listed below as Question \ref{q2}.

\subsection{Connected Hopf algebras of small Gel'fand-Kirillov dimension}\label{smallHopf} Let $k$ as usual be algebraically closed of characteristic 0. Zhuang  in \cite[Examples 7.1, 7.2, Proposition 7.6, Theorem 7.8]{Zhuang} classified all the connected Hopf $k$-algebras $H$ with $\mathrm{GKdim}(H) \leq 3$. In dimension at most 2 one obtains only the enveloping algebras of the Lie algebras $\mathfrak{g}$ with $\mathrm{dim}_k(\mathfrak{g}) \leq 2$, with their standard cocommutative coproducts.

However, for Gel'fand-Kirillov dimension 3, in addition to the cocommutative examples, the classification is completed by two infinite series of (generically) noncommutative, noncocommutative connected Hopf $k$-algebras $A(\lambda, \mu, \alpha)$ and $B(\lambda)$. We record here the quantum homogeneous spaces of $B(\lambda)$; the classification for the algebras $A(\lambda, \mu, \alpha)$ follows a similar pattern. We shall return to the family $B(\lambda)$ again later to illustrate our results - see Example (\ref{lambdasig}).

Let $\lambda \in k$ and define
$$
B(\lambda) = k \langle X,Y,Z : [X,Y]=Y, [Z,X]=-Z+\lambda{Y}, [Z,Y]=\frac{1}{2}{Y}^{2}\rangle .
$$
Define $\Delta:B\rightarrow B\otimes B$, $S:B\rightarrow B$ and $\epsilon:B\rightarrow k$ by setting
\[
X,Y \in P(B(\lambda)); \quad  \Delta(Z)=1\otimes{Z}+X\otimes{Y}+Z\otimes{1}.
\]
and
\[
\epsilon(X)=\epsilon(Y)=\epsilon(Z)=0; \quad S(X)=-X, S(Y)=-Y, S(Z)=-Z+XY.
\]
One checks routinely that these definitions extend to yield algebra homomorphisms $\Delta$ and $\epsilon$ and an algebra antihomomorphism $S$. In \cite[\textrm{Example 7.2}]{Zhuang} it is proved that $(B, \Delta, \epsilon, S)$ is a connected Hopf algebra of GK-dimension 3. An easy calculation shows that  $S^{m}(Z)\neq Z$ for any $m> 0$, hence
\begin{equation}\label{infinity} B(\lambda) \textit{ is a connected Hopf algebra whose antipode has infinite order.}
\end{equation}
With $W := Z - \frac{1}{2}YX$, a simple calculation shows that, \emph{as an algebra}, $B(\lambda)$ is isomorphic to the enveloping algebra of the soluble $k$-Lie algebra $ \mathfrak{g}$ with basis $X,Y,W$ and relations
$$ [X,Y] = Y,  \quad [W,Y] = 0,\quad [X,W] = W - \lambda Y. $$

 To fix notation we recall the following:

\begin{definition}\label{Ore}
Let $R$ be a ring, $\sigma$ a ring automorphism of $R$, and $\partial$ a $\sigma$-derivation of $R$ (that is, an additive map $\partial:R\rightarrow R$ such that $\partial(rs)=\sigma(r)\partial(s)+\partial(r)s$ for all $s, r\in R$). We write $A=R[z; \sigma, \partial]$ and say that $A$ is an $\emph{Ore extension}$ of $R$ provided
\begin{enumerate}
\item{
$A$ is a ring, containing $R$ as a subring.}
\item{
$z$ is an element of $A$.}
\item{
$A$ is a free left $R$-module with basis $\{1, z, z^{2},\ldots\}$.}
\item{
$zr=\sigma(r)z+\partial(r)$ for all $r\in R$.}
\end{enumerate}
\end{definition}

By straightforward calculations, which we leave to the interested reader, one can verify the following:

\begin{proposition}\label{Blambda}
The following is a complete list of the non-trivial proper  coideal subalgebras of $B(\lambda)$.
\begin{enumerate}
\item Set $\mathfrak{g}_{\alpha}$ to be the 1-dimensional Lie algebra in $B(\lambda)$ with basis $\{X+\alpha Y\}$ ($\alpha\in{k}$), and $\mathfrak{g}_{\infty}$ to be the Lie algebra with basis $\{Y\}$. Then $\{U(\mathfrak{g}_{\alpha}):\alpha\in k\cup \{\infty\}\}$ is the complete set of Hopf subalgebras (and also left or right coideal subalgebras) in $B(\lambda)$ of Gel'fand-Kirillov dimension 1.

 \item Let $\delta_{\beta},\delta \in \text{Der}_{k}(U(\mathfrak{g}_{\infty}))$ be given by
 $$\delta_{\beta}(Y)=Y+\frac{\beta}{2}Y^{2} \textit{ and }\delta(Y)=\frac{1}{2}Y^{2}.$$
 Then $$L_{\beta} := U(\mathfrak{g}_{\infty})[X+\beta Z;\delta_{\beta}], \textit{ for } \beta\in{k}; \textit{  and  } L_{\infty} := U(\mathfrak{g}_{\infty})[Z;\delta] $$ are all the left coideal subalgebras of Gel'fand-Kirillov dimension 2 in $B(\lambda)$.

\item The right coideal subalgebras of Gel'fand-Kirillov dimension dimension 2 in $B(\lambda)$ are
$$R_{\beta} := U(\mathfrak{g}_{\infty})[X+\beta (Z-XY);\delta_{-\beta}], \textit{ for } \beta\in{k}\}$$ \textit{  and  }
$$ R_{\infty} := U(\mathfrak{g}_{\infty})[Z-XY; -\delta].$$
\item $S(L_{\beta}) = R_{\beta}$ for $\beta \in k \cup \{\infty\}$, and vice versa. Moreover, $L_0 = R_0$ is the only Hopf algebra in the lists (2) and (3), and is the only algebra which occurs in both lists.

\end{enumerate}
\end{proposition}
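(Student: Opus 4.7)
The plan is to verify each listed algebra is a coideal subalgebra of the claimed type and dimension, then prove completeness via a case analysis on the primitive subspace $P(T)$, and finally deduce (3) and (4) via the antipode. Verification is routine: for each of $U(\mathfrak{g}_{\alpha})$, $L_{\beta}$, $L_{\infty}$, $R_{\beta}$, $R_{\infty}$, one checks the stated commutator (e.g.\ $[X+\beta Z,Y]=Y+\tfrac{\beta}{2}Y^{2}=\delta_{\beta}(Y)$) against the defining relations of $B(\lambda)$, computes $\Delta$ on the generators to confirm the coideal property (e.g.\ $\Delta(X+\beta Z)=1\otimes(X+\beta Z)+(X+\beta Z)\otimes 1+\beta X\otimes Y\in H\otimes L_{\beta}$, since $Y\in L_{\beta}$), and reads off $\GK=2$ from the Ore-extension PBW basis.

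For completeness, let $T$ be a non-trivial proper coideal subalgebra of $B(\lambda)$. Lemma~\ref{primitivecoideal} gives $P(T)\neq 0$; since $P(H)=kX+kY$ with $[X,Y]=Y$, the nonzero sub-Lie-algebras of $P(H)$ are $kY$, $k(X+\alpha Y)$ for $\alpha\in k$, and $P(H)$. If $\GK T=1$, picking a nonzero $u\in P(T)$ yields $k[u]\subseteq T$ with $\GK k[u]=\GK T=1$; taking associated gradeds inside the commutative $\gr H$ and applying a filtered-graded argument in the spirit of Theorem~\ref{flag}(3) forces $T=k[u]$, and normalising $u$ yields list (1). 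If $\GK T=2$, I would choose the least $d\geq 2$ such that $T\cap H_{d}$ contains an element $t$ outside the subalgebra generated by $P(T)$, write $t$ in the PBW basis $\{X^{a}Y^{b}Z^{c}:a+b+2c\leq 2\}$, and impose $\Delta(t)-t\otimes 1-1\otimes t\in H\otimes T$. After regrouping this tensor as $X\otimes p_{1}+Y\otimes p_{2}$ with $p_{1},p_{2}\in P(H)$, the condition forces $p_{1},p_{2}\in P(T)$, cutting the PBW coefficients down severely. The case $P(T)=P(H)$ then forces $T\supseteq L_{0}$ and hence $T=L_{0}$; the case $P(T)=kY$ reduces $t$, modulo $k[Y]\subseteq T$, to $c_{1}X+c_{6}Z$, and either $c_{6}\neq 0$ (giving $\beta=c_{1}/c_{6}$, hence $T=L_{\beta}$) or $c_{6}\neq 0$ with $c_{1}=0$ (giving $T=L_{\infty}$); in the case $P(T)=k(X+\alpha Y)$, one evaluates $[X+\alpha Y,t]$ and uses the $\lambda Y$ term in $[Z,X]=-Z+\lambda Y$ to show that closure of $T$ under multiplication forces $\lambda Y\in T$, collapsing this case back into a previous one.

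Parts (3) and (4) are then short. By Lemma~\ref{leftright}(1) the antipode gives a bijection between left and right coideal subalgebras, and a direct calculation using $S(Z)=-Z+XY$ shows $S(X+\beta Z)=-X-\beta Z+\beta XY=-(X+\beta(Z-XY))$, so $S(L_{\beta})$ is generated by $Y$ and $X+\beta(Z-XY)$, i.e.\ $S(L_{\beta})=R_{\beta}$; part (3) is an immediate consequence, and the same identity together with Lemma~\ref{leftright}(2) shows $L_{\beta}=R_{\beta}$ precisely when $\beta=0$. The main obstacle I anticipate is the last subcase of the GK-dim $2$ classification: the argument that $P(T)=k(X+\alpha Y)$ cannot produce a new GK-dim $2$ coideal subalgebra rests on the $\lambda Y$ term being nonzero in $[Z,X]$, so additional care (and possibly a separate direct check recognising candidate subalgebras inside the given family) is needed in the degenerate case $\lambda=0$.
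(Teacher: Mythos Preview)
The paper gives no proof beyond ``By straightforward calculations, which we leave to the interested reader, one can verify the following'', so your proposal is not so much being compared to a proof as supplying one; the direct--calculation strategy you outline (verify each candidate, then classify by analysing $P(T)$ and lifting a second generator from $\gr T$) is exactly the kind of routine computation the authors have in mind, and parts (1), (3), (4) go through as you indicate.

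There is one small slip and one genuine issue. The slip: in the $P(T)=kY$ subcase, after reducing $t$ modulo $k[Y]$ to $c_{1}X+c_{6}Z$, the dichotomy should be on $c_{1}$, not $c_{6}$: if $c_{1}\neq 0$ one gets $L_{\beta}$ with $\beta=c_{6}/c_{1}$, while $c_{1}=0,\;c_{6}\neq 0$ gives $L_{\infty}$. The genuine issue is the one you yourself flag at the end. Carrying out your own computation in the subcase $P(T)=k(X+\alpha Y)$, one finds (after normalising the degree-two element to $t=Z-XY-\tfrac{\alpha}{2}Y^{2}+\delta'Y$) that
\[
[X+\alpha Y,\,t]=t-\lambda Y,
\]
so closure under multiplication forces $\lambda Y\in T$. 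For $\lambda\neq 0$ this gives $Y\in T$, contradicting $P(T)=k(X+\alpha Y)$, and the case collapses as you predicted. But for $\lambda=0$ the bracket is $[u,t]=t$ and $T=k\langle X+\alpha Y,\,t\rangle$ \emph{is} a genuine left coideal subalgebra of $B(0)$ with $P(T)=k(X+\alpha Y)$, not contained in the stated list (it does not contain $Y$, hence is none of the $L_{\beta}$). So your caution is well placed: the completeness claim in part (2) appears to require $\lambda\neq 0$, and the degenerate case $\lambda=0$ carries an extra one-parameter family of left coideal subalgebras.
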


\section{Coideal Subalgebras of Connected Hopf Algebras}

\subsection{Associated graded algebras and Gel'fand-Kirillov dimension}\label{grade}

The starting point for almost everything which follows is:

\begin{lemma}\label{Tissiglan}
Let $H$ be a connected Hopf algebra of finite GK dimension $n$ and $T\subseteq{H}$ a left coideal subalgebra. Then $\operatorname{gr}T$ is a left coideal subalgebra of $\mathrm{gr}H$, and so  there exists some $m\leq n$ such that  $\gr T$ forms a graded polynomial algebra in $m$ variables, with $\mathrm{gr}H = \mathrm{gr}T[y_{m+1}, \ldots , y_n]$ for some elements $y_{m+1}, \ldots ,y_n$ of $\mathrm{gr}H$. Moreover, $\operatorname{GKdim}T=\operatorname{GKdim}\gr T=m\in\mathbb{Z}_{\geq 0}$.
\end{lemma}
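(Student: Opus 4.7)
The claim has three parts: the coideal property of $\gr T$, the (graded) polynomial structure on $\gr T$ together with the extension $\gr H = \gr T[y_{m+1},\ldots,y_n]$, and the Gel'fand--Kirillov dimension equality. The first part is already Lemma \ref{gradedcoideal}, so I would cite it without comment.

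For the polynomial/extension assertion, my strategy is to reduce to the commutative setting. By Theorem \ref{beautiful} the associated graded algebra $\gr H$ is a commutative, connected, finitely generated Hopf algebra, hence by Theorem \ref{laz} equals $\mathcal{O}(G)$ for a unipotent algebraic $k$-group $G$ of dimension $n$. Under the Masuoka correspondence of Theorem \ref{unipotent}(4), the left coideal subalgebra $\gr T$ of $\gr H$ corresponds to a closed subgroup of $G$; Theorem \ref{unipotent}(3) then realises $\gr T$ as a polynomial subalgebra in some $m \leq n$ variables with $\gr H = \gr T[y_{m+1},\ldots,y_n]$. That theorem only yields plain polynomial generators, not \emph{graded} ones; this is the one point requiring extra attention. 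I would remedy it by a standard graded Nakayama argument: $(\gr T)^+$ is a graded ideal of the connected graded algebra $\gr T$, so lifting a homogeneous basis of $(\gr T)^+/((\gr T)^+)^2$ produces $m$ homogeneous generators, which must be algebraically independent since $\gr T$ has Krull dimension $m$. The same device applied to $\gr H/(\gr T)^+ \gr H$ allows the $y_j$ to be chosen homogeneous.

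For the Gel'fand--Kirillov dimension statement, $\GK \gr T = m$ is immediate from the polynomial structure. The coradical filtration $\{T_i = T \cap H_i\}$ of $T$ consists of finite-dimensional subspaces (each $H_i$ being finite-dimensional, as noted in $\S\ref{coradfilt}$), and its associated graded $\gr T$ is finitely generated, so standard filtered-graded theory forces $\GK T = \GK \gr T$ (cf.\ Krause--Lenagan). Made self-contained: lifts $x_1,\ldots,x_m \in T$ of the homogeneous generators of $\gr T$ generate $T$ as a $k$-algebra by induction on filtration degree, and the monomials in the $x_i$ of bounded total degree remain linearly independent in $T$ because their images in the polynomial algebra $\gr T$ are distinct monomials; combined with the polynomial growth of $\dim T_i$ coming from the Hilbert series of $\gr T$, this pins $\GK T$ to $m$. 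The main obstacle in the whole proof is the homogeneity upgrade in the second step; everything else is orchestration of the preparatory material of $\S 2$--$\S 3$.
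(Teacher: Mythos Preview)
Your proposal is correct and follows essentially the same route as the paper: cite Lemma~\ref{gradedcoideal} for the coideal property, invoke Theorem~\ref{unipotent}(3),(4) (via Theorem~\ref{beautiful}/Theorem~\ref{laz}) for the polynomial structure and the extension $\gr H = \gr T[y_{m+1},\ldots,y_n]$, and appeal to standard filtered--graded results for $\GK T = \GK \gr T$ (the paper cites \cite[Proposition~8.6.5]{MR}). Your additional care in upgrading the generators to homogeneous ones is a point the paper leaves implicit here and only addresses later in Lemma~\ref{hilbkey}.
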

\begin{proof}
Most of this is an immediate consequence of Lemma \ref{gradedcoideal} and Theorem \ref{unipotent}(3) and (4). For the final equality of gel'fand-Kirillov dimensions, use \cite[Proposition 8.6.5]{MR}.
\end{proof}

The first consequence of Lemma \ref{Tissiglan} is the quantum analogue of the fact (Theorem \ref{flag}(3)) that the normaliser $N_G(T)$ of a closed proper subgroup of a unipotent group in characteristic 0 has dimension strictly greater than that of $T$. It follows at once from Lemma \ref{Tissiglan} and Theorem \ref{flag}(3).

\begin{proposition}\label{chain lemma}
Let $H$ be a connected Hopf $k$-algebra of finite Gel'fand-Kirillov dimension. Let $T$ and $S$ be (left) coideal subalgebras of $H$ such that $S\subseteq{T}$. Then $S=T$ if and only if $\operatorname{GK dim}S=\operatorname{GK dim}T$.
\end{proposition}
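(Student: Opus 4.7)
The forward direction is trivial: if $S = T$ then obviously $\GK S = \GK T$. For the converse, the plan is to pass to the associated graded algebras and reduce to the classical (commutative) case treated in Theorem \ref{flag}(3).

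First I would note that by Lemma \ref{Tissiglan}, $\gr S$ and $\gr T$ are both left coideal subalgebras of $\gr H$, with $\gr S \subseteq \gr T$ since the coradical filtration on $S$ is the restriction of that on $T$ (both being induced from the coradical filtration on $H$ via $S_n = S \cap H_n$ and $T_n = T \cap H_n$). Moreover, Lemma \ref{Tissiglan} gives
\[
\GK S = \GK \gr S \quad \text{and} \quad \GK T = \GK \gr T,
\]
so the hypothesis $\GK S = \GK T$ transfers to $\GK \gr S = \GK \gr T$.

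Next, $\gr H$ is a commutative polynomial $k$-algebra by Theorem \ref{beautiful}, and as a connected commutative affine Hopf algebra it is the coordinate ring $\mathcal{O}(U)$ of a unipotent algebraic $k$-group $U$ by Theorem \ref{laz}. Thus Theorem \ref{flag}(3) applies to the pair of left coideal subalgebras $\gr S \subseteq \gr T$ of $\gr H$, yielding $\gr S = \gr T$.

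Finally, I would use the standard filtered-graded lifting principle: if $\{S_n\}$ and $\{T_n\}$ are exhaustive filtrations of $S$ and $T$ respectively with $S_n \subseteq T_n$ for all $n$, and the induced map $\gr S \to \gr T$ is surjective (here in fact an equality), then $S_n = T_n$ for all $n$ by induction, and hence $S = T$. The inductive step uses that $S_n/S_{n-1} = T_n/T_{n-1}$ together with $S_{n-1} = T_{n-1}$ to conclude $S_n = T_n$. I do not anticipate any obstacle in this argument; the real content has been packaged into Lemma \ref{Tissiglan} and Theorem \ref{flag}(3), and the proof is genuinely an immediate consequence of those two results as the authors claim.
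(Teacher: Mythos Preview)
Your proposal is correct and follows exactly the approach the paper intends: the paper's proof consists of the single sentence that the result ``follows at once from Lemma \ref{Tissiglan} and Theorem \ref{flag}(3),'' and you have accurately unpacked this by passing to $\gr S \subseteq \gr T$ via Lemma \ref{Tissiglan}, applying Theorem \ref{flag}(3) in the commutative setting, and lifting the equality back through the filtration.
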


\subsection{Basic properties of quantum homogeneous spaces of connected Hopf algebras}\label{basic}

Next, we show that the excellent homological properties enjoyed by connected Hopf algebras of finite GK-dimension extend to their quantum homogeneous spaces. The proof is a standard application of filtered-graded methods, closely following the case $T = H$ dealt with by Zhuang in \cite[Corollary 6.10]{Zhuang}. The terminology and notation used in the theorem is standard, and can be found for example in \cite{MR}, \cite{BrownGoodearl} or \cite{VO}. In particular, the \emph{(homological) grade} of a module $M$ over a $k$-algebra $R$ is $j_R(M) := \mathrm{inf}\{j: \mathrm{Ext}^j_R(M,R) \neq 0 \}.$

\begin{theorem}\label{homological}
Let $H$ be a connected Hopf $k$-algebra of finite Gel'fand-Kirillov dimension $n$. Let $T$ be a left coideal subalgebra of $H$ with $\operatorname{GKdim}T=m$.
\begin{enumerate}
\item{$T$ is a noetherian domain of Krull dimension at most $m$.}
\item{ $T$ is GK-Cohen-Macaulay.}
\item{$T$ is Auslander-regular of global dimension $m$.}
\item{$T$ is AS-regular of dimension $m$.}

\end{enumerate}
\end{theorem}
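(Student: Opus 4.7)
The plan is to transfer each property from the associated graded algebra $\mathrm{gr}\,T$ to $T$ itself by way of standard filtered-graded lifting, exactly as Zhuang does for the case $T=H$ in \cite[Corollary 6.10]{Zhuang}. By Lemma \ref{Tissiglan}, $\mathrm{gr}\,T$ is a commutative polynomial $k$-algebra in $m$ variables with respect to the coradical filtration $\{T_i\}$, and this filtration is positive, exhaustive, separated, and satisfies $T_0 = k$ (so in particular it is connected as a filtration). Each $T_i$ is finite dimensional since $\dim H(i) < \infty$ by Theorem \ref{beautiful}. These are exactly the hypotheses under which the filtered-to-graded lifting theorems in \cite{MR} and \cite{VO} apply.

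Carrying this out in order: first, since $\mathrm{gr}\,T$ is commutative noetherian, a standard argument (McConnell-Robson Theorem 1.6.9) shows $T$ is noetherian; since $\mathrm{gr}\,T$ is a domain, $T$ is also a domain. The bound on Krull dimension follows from the bound on Krull dimension of $\mathrm{gr}\,T$ (which is $m$) via \cite[\S 6.5]{MR}. For part (3), the polynomial algebra $\mathrm{gr}\,T$ is Auslander-regular of global dimension $m$, and the standard Björk-type result (see e.g. \cite[Chapter III, Theorem 2.2.5]{VO}) lifts Auslander-regularity from $\mathrm{gr}\,T$ to $T$ with the same global dimension, giving $\mathrm{gl.dim}\,T = m$. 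For part (2), GK-Cohen-Macaulayness of $T$ asserts $j_T(M) + \GK M = \GK T$ for every nonzero finitely generated $T$-module $M$; since $\mathrm{gr}\,T$ is a commutative polynomial ring and hence GK-Cohen-Macaulay, this property also lifts to $T$ by filtered-graded comparison of Ext groups, again as in \cite[\S III]{VO} (this is precisely the ingredient Zhuang uses for $H$).

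Finally, for (4), AS-regularity of $T$ amounts to: $\mathrm{gl.dim}\,T = m < \infty$ (already in (3)), finite GK-dimension (given), and the Gorenstein condition $\calExt^i_T(k,T) = \delta_{i,m} k_\chi$ for some character $\chi$. The Gorenstein condition holds for $\mathrm{gr}\,T = k[x_1,\dots,x_m]$, and, using the spectral sequence relating $\calExt^*_{\mathrm{gr}\,T}(k,\mathrm{gr}\,T)$ to $\calExt^*_T(k,T)$ (the standard filtered-to-graded comparison), the Gorenstein condition lifts to $T$ with the one-dimensional output concentrated in degree $m$. Combined with (3), this yields AS-regularity of dimension $m$.

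The step most at risk of being delicate is the lifting of Auslander-regularity and the Gorenstein condition in parts (3) and (4), because the coradical filtration is not obviously nicely behaved enough for the simplest forms of Björk's theorem: one must check that the Rees ring of $T$ is noetherian and that the filtration is Zariskian. However, this is exactly what connectedness of the filtration and finite-dimensionality of each $T_i$ guarantee, so the obstacle is one of citation rather than substance; the whole proof reduces to invoking the right filtered-graded machinery, with Lemma \ref{Tissiglan} doing all the essential Hopf-theoretic work.
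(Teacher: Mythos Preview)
Your proposal is correct and follows essentially the same filtered--graded strategy as the paper for parts (1), (2), and (4): lift each property from $\gr T \cong k[x_1,\dots,x_m]$ via standard machinery (\cite{MR}, \cite{VO}, \cite{Bjork}), with Lemma~\ref{Tissiglan} supplying the crucial input.

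There is one noteworthy divergence in part (3). You propose to lift Auslander-regularity directly from $\gr T$ to $T$ via Bj\"ork-type results, relying on the filtration being Zariskian. The paper instead splits the argument: it obtains $\mathrm{gl.dim}\,T \leq m$ by filtered--graded lifting, then uses part (2) applied to $M=k$ to get $j_T(k)=m$ and hence equality; but for the \emph{Auslander condition} itself it takes a Hopf-theoretic detour, invoking Zhuang's result that $H$ is Auslander--Gorenstein together with \cite[Proposition 2.3]{LiuWu}, which says a coideal subalgebra of an Auslander--Gorenstein Hopf algebra (over which $H$ is faithfully flat) is again Auslander--Gorenstein. Your direct route is arguably cleaner and keeps the proof self-contained within filtered--graded technology; the paper's route has the virtue of exhibiting how the Auslander condition descends along the faithfully flat inclusion $T\subseteq H$, which is of independent interest. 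Either approach works; the paper's use of part (2) to pin down $j_T(k)=m$ (hence both the exact global dimension and the nonvanishing of $\operatorname{Ext}^m_T(k,T)$ needed in (4)) is a point you leave slightly implicit and would do well to make explicit.
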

\begin{proof} (1) This follows from \cite[Proposition 1.6.6, Theorem 1.6.9 and Lemma 6.5.6]{MR}.

(2) Let $M$ be a finitely generated (left or right) $T$-module. Choose a good filtration of $M$, in the sense of \cite[\textrm{Definition 5.1}]{VO}. Then $\gr{M}$ is a finitely generated $\gr T$-module by \cite[\textrm{Lemma 5.4}]{VO}. By Lemma $\ref{Tissiglan}$, $\gr{T}$ is GK-Cohen-Macaulay. In particular,
\begin{equation}\label{GKM2}
j_{\gr T}(\gr M)+\GK{\gr M}=\GK{\gr T}.
\end{equation}
By Lemma $\ref{Tissiglan}$, $\GK{T}=\GK{\gr T} = m$. Since $\gr T$ is affine and $\gr M$ is a finitely generated $\gr T$-module, $\GK{M}=\GK{\gr M}$ by \cite[\textrm{Proposition 6.6}]{KL}. Finally, as in the proof of \cite[\textrm{Theorem 3.9}]{Bjork}, $j_{\gr{T}}(\gr M)=j_{T}(M)$. The result now follows from ($\ref{GKM2}$).

(3) By Lemma \ref{Tissiglan} and filtered-graded considerations \cite[Corollary 7.6.18]{MR}, $T$ has (right and left) global dimension at most $m$. Taking $M$ to be the trivial $T$-module $k$  in (2) yields $j_T(k)=m$. Hence the global dimension of $T$ is $m$.

The Hopf algebra $H$ is Auslander-Gorenstein by \cite[\textrm{Corollary 6.11}]{Zhuang}, and so, by \cite[\textrm{Proposition 2.3}]{LiuWu}, the left coideal subalgebra $T$ is too. Being Auslander-Gorenstein of finite global dimension, $T$ is by definition Auslander-regular.

(4) By part (2) and the fact that $\mathrm{gldim}T = m$, it remains only to check that the non-zero space $\operatorname{Ext}_{T}^{m}(k,T)$ satisfies
\begin{equation}\label{one} \mathrm{dim}_k \operatorname{Ext}_{T}^{m}(k,T) = 1. \end{equation}

In the sense of \cite[\textrm{Chapter 2.6}]{Bjork}, the filtration on the $T$-module $k$ (coming from the coradical filtration of $T$) is $\emph{good}$, \cite[8.6.3]{MR}. This yields good filtrations on the $T$-modules $\operatorname{Ext}_{T}^{j}(k,T)$ - let $\gr_{*}(\operatorname{Ext}_{T}^{j}(k,T))$ denote the associated graded $\gr{T}$-modules, for $j = 0, \ldots , m$. By \cite[\textrm{Proposition 6.10}]{Bjork}, $\gr_{*}(\operatorname{Ext}_{T}^{j}(k,T))$ is a sub factor of the $\gr{T}$-module $\operatorname{Ext}_{\gr{T}}^j(k, \gr{T})$. By Lemma \ref{Tissiglan} $\gr{T}$ is AS-regular of dimension $m$, so that $\operatorname{Ext}_{\gr{T}}^{m}(k, \gr{T}) = k$. Thus
$$ \mathrm{dim}_k \operatorname{Ext}_{T}^{m}(k,T) = \mathrm{dim}_k \operatorname{Ext}_{\gr{T}}^{m}(k, \gr{T}) \leq 1, $$
as required.
\end{proof}

The enveloping algebra $H = U(\mathfrak{g})$ of a finite-dimensional simple complex Lie algebra $\mathfrak{g}$ has Krull dimension $\mathrm{dim}_{\mathbb{C}}(\mathfrak{b})$, where $\mathfrak{b}$ is a Borel subalgebra of $\mathfrak{g}$, \cite{Lev}. Thus the inequality in Theorem \ref{homological}(1) is strict in general.

\subsection{The Calabi-Yau property for quantum homogeneous spaces} \label{CY} Let $A$ be a $k$-algebra. For a left $A^{e}=A\otimes A^{\operatorname{op}}$-module (that is, $A$-bimodule) $M$ and $k$-algebra endomorphisms $\nu, \sigma$ of $A$, denote by $^{\nu}M^{\sigma}$ the $A^{e}$-module whose underlying vector space is $M$, with $A^{e}$-action
\[
a\cdot m\cdot b= \nu(a)m \sigma(b).
\]
for $a,b \in A, m\in M$. If $\nu = \mathrm{Id}$, write $M^{\sigma}$ rather than $^{\mathrm{Id}}M^{\sigma}$.

\begin{definition}(\cite{KZhang})
An algebra $A$ is  \emph{$\nu$-twisted Calabi-Yau of dimension $d$} for a $k$-algebra automorphism $\nu$ of $A$ and an integer $d\geq{0}$ if
\begin{enumerate}
\item{
$A$ is \emph{homologically smooth}, that is, as an $A^{e}$-module, $A$ has a finitely generated projective resolution of finite length;}
\item{
$\operatorname{Ext}_{A^{e}}^{i}(A,A^{e})\cong\delta_{i,d}A^{\nu}$ as $A^{e}$-modules, where the $A^{e}$-module structure on the $\operatorname{Ext}$ group is induced by the right $A^{e}$-module structure of $A^{e}$.
}
\end{enumerate}

\noindent Then $\nu$ is uniquely determined up to an inner automorphism and is called the \emph{Nakayama automorphism} of $A$. Some authors use the term ``$\nu$-skew'' rather than ``$\nu$-twisted''. We omit the adjective ``twisted" if $\nu$ is inner.
\end{definition}

\begin{remark}\label{integral}(\cite{WuZhang})
Let $A$ be an AS-Gorenstein $k$-algebra of dimension $d$. Taking $k$ to be the \emph{left} $A$-module annihilated by the augmentation ideal $A^+$ of $A$, the one-dimensional space $\operatorname{Ext}_{A}^{d}({_{A}k},{_{A}A})$ is the \emph{left homological integral} of $A$, denoted $\int_{A}^{\ell}$. From its definition, it has an induced $A$-bimodule structure: the left $A$-action is induced by the trivial action on $k$, whereas the right $A$-module structure on $\int_{A}^{\ell}$ is induced from the right $A$-module structure on $A$. Thanks to the AS-Gorenstein hypothesis, this right $A$-module structure induces a character $\chi:A\rightarrow{k}$ such that
\[
f\cdot a=\chi(a)f.
\]
for all $f\in \int_{A}^{l}$ and $a \in A$.

\end{remark}

Recall that a Hopf algebra $A$ (with bijective antipode) which is noetherian and AS-Gorenstein of dimension $n$ is twisted Calabi-Yau of dimension $n$, \cite[\textrm{Proposition 4.5}]{KZhang}. In view of \cite[Corollary 6.10]{Zhuang}, the homological corollary of Theorem \ref{beautiful}, this applies in particular to a connected Hopf $k$-algebra $H$ of finite GK-dimension $n$. Moreover, also by \cite[\textrm{Proposition 4.5}]{KZhang}), the Nakayama automorphism $\nu$ of $H$ is given by
$$\nu=\tau_{\chi}^{\ell}\circ S^{2},$$
where $S$ denotes as usual the antipode of $H$ and $\tau_{\chi}^{\ell}$ denotes the left winding automorphism of the character $\chi:H\rightarrow k$ defined by $\int_{H}^{\ell}$ as in Remark \ref{integral}. That is, $\tau_{\chi}^{\ell}(h) = \sum \chi (h_1)h_2$ for $h \in H$.
\\

Naturally, one asks: does this generalise to a right quantum homogeneous space $T$ of a connected Hopf $k$-algebra $H$ of finite GK-dimension?
\\

 By Theorem \ref{homological}(4) such a right coideal subalgebra $T$ is AS-Gorenstein, and so has a left homological integral $\int_{T}^{\ell}$, with character $\chi:T\rightarrow k$. \emph{A priori}, the map $$\tau_{\chi}^{\ell}:T\rightarrow H : t \mapsto \sum \chi(t_1)t_2 $$
 while easily seen to be an algebra homomorphism, might not take values in $T$. But this \emph{is} in fact so for all such quantum homogeneous spaces $T$ in $H$, by \cite[\textrm{Lemma 3.9}]{LiuWu}, building on work of \cite{Uli}. Consequently, we obtain:

\begin{proposition} \label{conak}
Let $H$ be a connected Hopf $k$-algebra with finite Gel'fand-Kirillov dimension.
 \begin{enumerate}
 \item Let $T$ be a right coideal subalgebra of $H$ with $\operatorname{GKdim}{T}=m$. Then $T$ is twisted Calabi-Yau of dimension $m$. Retaining the notation introduced above, so in particular $\chi$ is the character of the right structure of the left integral of $T$, the Nakayama automorphism $\nu$ of $T$ is
 \[
\nu=S^{2}\circ \tau_{\chi}^{\ell}.
\]
\item The same conclusions apply to a left coideal subalgebra $T$ of $H$, with $\chi$ as in (1), with Nakayama automorphism $\nu$ of $T$
\[
\nu=S^{-2}\circ \tau_{\chi}^{r}.
\]
\end{enumerate}
\end{proposition}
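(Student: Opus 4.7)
The plan is to combine the homological regularity of $T$ established in Theorem \ref{homological} with the Nakayama automorphism formula for quantum homogeneous spaces developed by Kraehmer \cite{Uli} and Liu--Wu \cite{LiuWu}.

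For (1), first I would upgrade the AS-regularity of Theorem \ref{homological}(4) to the twisted Calabi-Yau property. Since $T$ is noetherian by Theorem \ref{homological}(1), and $\operatorname{gr}T$ is a commutative polynomial algebra in $m$ variables by Lemma \ref{Tissiglan}, the Koszul resolution of $\operatorname{gr}T$ over $(\operatorname{gr}T)^{e}$ is a finite resolution by finitely generated projectives; lifting through the Rees construction and using the standard filtered-graded projectivity transfer yields a finite projective bimodule resolution of $T$, so $T$ is homologically smooth. Together with the AS-regularity of dimension $m$, this produces a twisted Calabi-Yau structure with a well-defined Nakayama automorphism $\nu$, unique up to inner automorphism.

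Second, I would verify that the proposed formula $\nu=S^{2}\circ\tau_{\chi}^{\ell}$ is even well-posed on $T$: \emph{a priori} the winding map $\tau_{\chi}^{\ell}$ is only defined as an algebra homomorphism $T\to H$, but by \cite[Lemma 3.9]{LiuWu} (resting on \cite{Uli}) it restricts to an automorphism of $T$, and Theorem \ref{antisquare}(1) gives $S^{2}(T)=T$. The central step is then to identify $\nu$ with $S^{2}\circ\tau_{\chi}^{\ell}$. The intended route is to invoke the Nakayama formula for quantum homogeneous spaces of \cite{LiuWu}, whose hypotheses are already in place: two-sided faithful flatness of $H$ over $T$ (Proposition \ref{quantmasuoka}(2)), AS-regularity of $T$ (Theorem \ref{homological}(4)), and the character $\chi$ coming from the one-dimensional integral as in Remark \ref{integral}.

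Finally, (2) should be derived from (1) by transport along the antipode. By Lemma \ref{leftright}(1), if $T$ is a left coideal subalgebra then $S(T)$ is a right coideal subalgebra, so (1) applies to $S(T)$ and yields its Nakayama automorphism as $S^{2}\circ\tau_{\chi'}^{\ell}$ for the character $\chi'$ associated with $\int_{S(T)}^{\ell}$. Pulling this back along the algebra anti-isomorphism $S:T\to S(T)$, and using the standard conjugation identities $S^{-1}\circ S^{2}\circ S = S^{-2}$ together with the interchange of left and right winding automorphisms under the antipode, delivers the formula $\nu=S^{-2}\circ\tau_{\chi}^{r}$ on $T$ after matching $\chi'$ with $\chi$ via the bijection between the integrals of $T$ and $S(T)$.

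The principal obstacle is the identification of $\nu$ in the third step: without the black-box of \cite{LiuWu}, one would have to build an explicit projective bimodule resolution of $T$ (lifting the Koszul resolution of $\operatorname{gr}T$) and then compute the $T^{e}$-action on its dualising bimodule, tracking carefully how $S^{2}$ and the integral $\int_{T}^{\ell}$ intervene; cleanly isolating the precise appearance of $S^{2}$ rather than some other twist of the antipode is the delicate point, and is exactly where the Liu--Wu machinery does the work for us.
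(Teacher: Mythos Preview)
Your treatment of (1) is essentially the paper's argument: AS-regularity from Theorem \ref{homological}(4), homological smoothness, and then the Liu--Wu Nakayama formula. The only difference is cosmetic: the paper cites \cite[Lemma 3.7]{Rigid} for homological smoothness rather than the filtered-graded lift of the Koszul resolution, and invokes \cite[Theorem 3.6]{Rigid} (rather than \cite{LiuWu}) for the formula $\nu=S^{2}\circ\tau_{\chi}^{\ell}$.

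For (2), however, you take a genuinely different route from the paper, and your version contains an error. The paper does \emph{not} pass to $S(T)$; instead it replaces $\mathbf{H}=(H,\mu,\Delta,S,\epsilon)$ by $\mathbf{H'}=(H,\mu,\Delta^{\mathrm{op}},S^{-1},\epsilon)$, which is again a connected Hopf algebra with the same underlying algebra. The left coideal subalgebra $T$ of $\mathbf{H}$ is then a \emph{right} coideal subalgebra of $\mathbf{H'}$, so part (1) applies directly to $T$ itself, with antipode $S^{-1}$ and coproduct $\Delta^{\mathrm{op}}$. This immediately yields $\nu=(S^{-1})^{2}\circ\tau_{\chi}^{\ell,\mathbf{H'}}=S^{-2}\circ\tau_{\chi}^{r}$, since a left winding automorphism for $\Delta^{\mathrm{op}}$ is a right winding automorphism for $\Delta$. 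No transport along an anti-isomorphism is needed, and no matching of integrals between $T$ and $S(T)$ is required.

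Your proposed transport argument can in principle be made to work, but the ``standard conjugation identity'' you quote, $S^{-1}\circ S^{2}\circ S=S^{-2}$, is false: the left-hand side equals $S^{2}$. The correct bookkeeping when pulling a Nakayama automorphism back along an algebra anti-isomorphism $\phi:T\to S(T)$ involves an inversion (since the Nakayama automorphism of $A^{\mathrm{op}}$ is $\nu_{A}^{-1}$), together with the interchange of left and right integrals under $S$; untangling this correctly is exactly the delicacy that the paper's $\Delta^{\mathrm{op}}$ trick sidesteps.
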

\begin{proof} (1) That $T$ is homologically smooth follows from \cite[\textrm{Lemma 3.7}]{Rigid}. Given that $T$ is AS-regular by Theorem \ref{homological}(4), \cite[\textrm{Theorem 3.6}]{Rigid} implies that  $T$ is $\nu$-twisted Calabi-Yau , with $\nu=S^{2}\circ\tau_{\chi}^{\ell}$.

(2) Let $T$ be a left coideal subalgebra of $\mathbf{H} := (H, \mu, \Delta,S,\epsilon)$. By \cite[Lemma 1.5.11]{Mont}, $\mathbf{H'} := (H, \mu, \Delta^{op},S^{-1},\epsilon)$ is a Hopf algebra, clearly connected, and with the Gel'fand-Kirillov dimensions of $H$ and its subalgebras unchanged, since the algebra structure is the same. However, $T$ is now a \emph{right} coideal subalgebra of $\mathbf{H'}$, so part (1) can be applied to it. Hence, the Nakayama automorphism is as stated in (2), with a \emph{right} winding automorphism appearing now (with respect to $\Delta$) because the coproduct for $\mathbf{H}$ is $\Delta^{op}$.
\end{proof}

Here is a typical example to illustrate the proposition.

\begin{example}\label{Blambdanak} $\textbf{Nakayama automorphism of a coideal subalgebra of }  B(\lambda)$.

The notation is as introduced in $\S$\ref{smallHopf}. So the right coideal subalgebra we consider is $R_{\infty} :=k\langle Y, W\rangle \subseteq B(\lambda)$, where $W:=Z-XY$. As noted in Proposition \ref{Blambda}, $ [W,Y]=-\frac{1}{2}Y^{2}$. Then $R_{\infty}$  is a right coideal subalgebra of $B(\lambda)$ with $\mathrm{GKdim}R_{\infty} = 2$, and $R_{\infty}$ is isomorphic to the Jordan plane. By Proposition $\ref{conak}$, to compute the Nakayama automorphism of $R_{\infty}$ we must compute the right $R_{\infty}$-module structure of $\int^{l}_{R_{\infty}}=\operatorname{Ext}_{R_{\infty}}^{2}(k, R_{\infty})$.
\\

For an automorphism $\tau$ of $R_{\infty}$, call $b\in R_{\infty}$ \emph{$\tau$-normal} if $\tau(a)b=ba$ for all $a\in R_{\infty}$. Thus $Y$ is a $\sigma$-normal element of $R_{\infty}$, where $\sigma(Y) = Y$, $\sigma (W) = W+\frac{1}{2}{Y}.$  Set $\overline{R_{\infty}}=R_{\infty}/YR_{\infty}$.  Then $\overline{R_{\infty}} \cong k[W]$, so that
$$\int^{l}_{\overline{R_{\infty}}}=\operatorname{Ext}_{\overline{R_{\infty}}}^{1}(k, \overline{R_{\infty}})\cong \; ^1k^1.$$
Hence, by the noncommutative Rees Change of Rings theorem, \cite[\textrm{Lemma 6.6}]{KZhang}, there is an $R_{\infty}$-bimodule isomorphism
\begin{equation}\label{sigma}
\int_{R_{\infty}}^{l}\cong {\int_{\overline{R_{\infty}}}^{l}}^{\sigma^{-1}} \cong \; ^1 k^{\sigma^{-1}} \cong \; ^1 k^1,
\end{equation}
where the final isomorphism above holds since twisting by $\sigma^{-1}$ does not alter the structure of the trivial $R_{\infty}$-module. Therefore, by Proposition $\ref{conak}$, $$\nu_{R_{\infty}}=S^{2}_{|R_{\infty}},$$
so that $$\nu (Y) = Y \textit{ and } \nu (W) = W - Y.$$

The above calculation agrees with that carried out for the Jordan plane by other means in, for example \cite[$\S$4.2]{LiuWang}.
\end{example}

\begin{remark}\label{antiformula} If $T$ is a Hopf subalgebra of $\mathbf{H} = (H,\Delta,S,\epsilon)$, then both parts of the proposition apply to it. The Nakayama automorphism of a skew Calabi-Yau algebra is unique up to an inner automorphism of the algebra, but in this case $H$ and thus $T$ have no non-trivial inner automorphisms, thanks to the fact that the only units of connected Hopf $k$-algebras of finite Gel'fand-Kirillov dimension are in $k^*$, since this is true for their associated graded algebras by Zhuang's Theorem \ref{beautiful}. Thus, generalising \cite[4.6]{KZhang} for the particular case of connected algebras, we find that, for a Hopf subalgebra $T$ of the connected Hopf $k$-algebra $H$ of finite Gel'fand-Kirillov dimension,
\begin{equation} S_{|T}^4 = \tau_{-\chi}^{\ell}\circ\tau_{\chi}^r ,
\end{equation}
where $\chi$ is the character of the right structure on $\int_T^{\ell}$. We find this formula rather curious, given that there is no obvious relationship between $\int_T^{\ell}$ and $\int_H^{\ell}$.
\end{remark}

The following corollary of Proposition \ref{conak} ought to have a more direct proof.

\begin{corollary}\label{anticomco} Let $C$ be a commutative right or left coideal subalgebra of a connected Hopf $k$-algebra $H$ of finite Gel'fand-Kirillov dimension. Then $S^2_{|C} = \mathrm{Id}_C$.
\end{corollary}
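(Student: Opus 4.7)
The approach is to apply Proposition \ref{conak}, which identifies the Nakayama automorphism of $C$, and then to leverage the strong rigidity coming from commutativity. I treat the case where $C$ is a right coideal subalgebra; the left case will be entirely analogous via Proposition \ref{conak}(2), with $\tau^{r}_{\chi}$ and $S^{-2}$ in place of $\tau^{\ell}_{\chi}$ and $S^{2}$.

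By Proposition \ref{commute}, $C \cong k[X_1, \ldots, X_m]$ as a $k$-algebra, with $m = \GK{C}$. Proposition \ref{conak}(1) then asserts that $C$ is twisted Calabi-Yau of dimension $m$ with Nakayama automorphism $\nu_C = S^{2} \circ \tau_{\chi}^{\ell}$, where $\chi: C \to k$ is the character describing the right $C$-action on $\int_{C}^{\ell} = \operatorname{Ext}_{C}^{m}(k, C)$. Since the commutative polynomial algebra $C$ is (untwisted) Calabi-Yau, its Nakayama automorphism is the identity. Because $C$ is commutative it admits no nontrivial inner automorphisms, so the Nakayama automorphism is not merely determined up to an inner automorphism but equals $\mathrm{Id}_C$ exactly. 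Hence $S^{2} \circ \tau_{\chi}^{\ell} = \mathrm{Id}_C$ on $C$.

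To finish, I would separately identify $\chi = \epsilon|_C$. This is a standard commutative algebra computation: since $k = C/C^+$ is supported at the augmentation ideal, the nonzero module $\operatorname{Ext}_{C}^{m}(k, C)$ is annihilated by $C^+$, so $C$ acts on the one-dimensional space $\int_{C}^{\ell}$ via $\epsilon|_C$. Alternatively, one reads this off the Koszul resolution of $k$ over $k[X_1, \ldots, X_m]$: the top cohomology of $\operatorname{Hom}_C(\Lambda^{\bullet} V \otimes C, C)$ computes to $C/C^+ = k$ with residual right $C$-action by multiplication, which factors through $\epsilon$. With $\chi = \epsilon|_C$ established, the counit axiom yields $\tau_{\chi}^{\ell}(t) = \sum \epsilon(t_1) t_2 = t$ for every $t \in C$, i.e.\ $\tau_{\chi}^{\ell} = \mathrm{Id}_C$. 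Substituting into the identity $S^{2} \circ \tau_{\chi}^{\ell} = \mathrm{Id}_C$ gives $S^{2}|_C = \mathrm{Id}_C$.

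The only step requiring substantive verification is the identification $\chi = \epsilon|_C$; Proposition \ref{conak} by itself produces only the composite formula $\nu_C = S^{2} \circ \tau_{\chi}^{\ell}$, which is strictly weaker than the corollary. The rest is formal.
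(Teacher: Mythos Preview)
Your proof is correct and follows essentially the same route as the paper: apply Proposition~\ref{conak} to get $\nu_C = S^{2}\circ\tau_{\chi}^{\ell}$, then use commutativity to force both $\nu_C = \mathrm{Id}_C$ and $\chi = \epsilon|_C$, whence $S^{2}|_C = \mathrm{Id}_C$. You supply more detail (invoking Proposition~\ref{commute} and spelling out via the Koszul argument why $C^{+}$ annihilates $\operatorname{Ext}^{m}_{C}(k,C)$), but the paper's proof is the same argument stated more tersely.
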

\begin{proof} Apply Proposition \ref{conak} to $C$. Commutativity of $C$ ensures that both the character $\chi$ and the automorphism $\nu$ are trivial, meaning $\chi = \epsilon$ and $\nu = \mathrm{Id}_C$. Substituting these values in the formula for $\nu$ gives the desired conclusion.
\end{proof}

Note that the corresponding result to the above with ``cocommutative'' replacing ``commutative'' is rather trivial, since then $C$ is a cocommutative Hopf subalgebra of $H$, as was noted in $\S$\ref{cocommutative}.

\section{Invariants of Quantum Homogeneous Spaces}

\subsection{Preliminaries on gradings}\label{gradings} Let $A=\bigoplus_{i\geq0} A(i)$ be a connected $\mathbb{N}$-graded $k$-algebra with $\operatorname{dim}_{k}(A(i))<\infty$ for all $i\geq 0$. We write
\[ h_{A}(t)=\sum_{i=0}^{\infty}\operatorname{dim}_{k}A(i)t^{i}
\]
for the $\emph{Hilbert series}$ of $A$. We need the following well-known and easy lemma.

\begin{lemma}\label{tensor}
Let $A, B$ and $C$ be locally finite connected $\mathbb{N}$-graded algebras, with $A\cong B\otimes_k C$ as graded algebras. Then $$h_{A}(t)=h_{B}(t)h_{C}(t).$$
\end{lemma}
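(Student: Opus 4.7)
The plan is to unpack the definition of the tensor product grading and then match the resulting dimension formula against the Cauchy product of the two Hilbert series.

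First I would fix the grading convention: since $A \cong B\otimes_k C$ as graded algebras, the degree-$n$ component of $A$ is
\[
A(n) \;=\; \bigoplus_{i+j=n} B(i)\otimes_k C(j),
\]
which is a finite direct sum because $B$ and $C$ are $\mathbb{N}$-graded (so only finitely many pairs $(i,j)$ with $i,j\ge 0$ and $i+j=n$ contribute). Each summand is finite-dimensional by the local finiteness hypothesis, so $A$ is also locally finite and the series $h_A(t)$ makes sense as a formal power series in $\mathbb{Z}[[t]]$.

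Next I would take dimensions. Using $\dim_k(B(i)\otimes_k C(j)) = \dim_k B(i)\cdot \dim_k C(j)$ and additivity of dimension over direct sums, I obtain
\[
\dim_k A(n) \;=\; \sum_{i+j=n} \dim_k B(i)\,\dim_k C(j).
\]
This is exactly the coefficient of $t^n$ in the Cauchy product $h_B(t)\cdot h_C(t)$. Multiplying by $t^n$ and summing over $n\ge 0$ therefore yields $h_A(t) = h_B(t)\,h_C(t)$ as formal power series.

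There is no real obstacle here; the only thing to be careful about is ensuring the tensor product decomposition is compatible with the grading (which is built into the hypothesis ``as graded algebras'') and that the sums involved are finite at each degree, so that no convergence issue arises when equating coefficients.
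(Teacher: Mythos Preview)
Your proof is correct and is exactly the standard argument; the paper itself does not supply a proof, calling the lemma ``well-known and easy,'' so there is nothing further to compare.
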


The next easy lemma is key to the definitions in this section.

\begin{lemma}\label{hilbkey} Let $R$ be a commutative connected $\mathbb{N}$-graded commutative polynomial $k$-algebra, with homogeneous polynomial generators $x_1, \ldots , x_n$. Let $\mathfrak{m} = \bigoplus_{i > 0} R(i) = \langle x_1, \ldots , x_n \rangle$ be the graded maximal ideal of $R$.
\begin{enumerate}
\item Homogeneous elements $y_1, \ldots , y_n$  form a set of polynomial generators of $R$ if and only if their images in $\mathfrak{m}/\mathfrak{m}^2$ form a $k$-basis for this space.
\item Let $C$ and $D$ be graded polynomial subalgebras of $R$ such that $C \subseteq D$ and $R = C[z_1, \ldots ,z_t] = D[w_1, \ldots , w_r]$; that is, $R$ is a polynomial algebra over $C$ and over $D$. Then there exist homogeneous elements $u_1, \ldots , u_n$ in $\mathfrak{m}$ such that
$$     C = k[u_1, \ldots ,u_{n-t}], \qquad D = k[u_1, \ldots ,u_{n-r}], \qquad R = k[u_1, \ldots ,u_{n}].$$
\item The multiset $\sigma(C)$ of degrees of a homogeneous set of polynomial generators of $C$ equals the multiset of degrees of a homogeneous basis of $\mathfrak{m}\cap C/(\mathfrak{m}\cap C)^2$, and hence is independent of the choice of such a generating set.
\item $\sigma (C) \subseteq \sigma (D)$, with equality if and only if $C = D$. Equivalently, the Hilbert polynomial $h_C (t)$, which equals $\prod_{d \in \sigma(C)}\frac{1}{(1-t^{d})}$, divides $h_D (t)$.
\end{enumerate}
\end{lemma}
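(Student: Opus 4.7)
The plan is to reduce every part to graded Nakayama plus Hilbert-series bookkeeping, exploiting throughout that $R$ and its graded subalgebras are locally finite-dimensional connected $\mathbb{N}$-graded algebras. For (1), the ``if'' direction starts from graded Nakayama: if the classes of $y_1,\ldots,y_n$ span $\mathfrak{m}/\mathfrak{m}^2$, then by induction on degree every homogeneous element of $\mathfrak{m}$ lies in $k[y_1,\ldots,y_n]$. The graded algebra surjection $\phi \colon k[Y_1,\ldots,Y_n] \twoheadrightarrow R$ with $Y_i \mapsto y_i$ and $\deg Y_i := \deg y_i$ is therefore a graded surjection of two polynomial $k$-algebras in $n$ variables; since any two graded bases of the finite-dimensional graded space $\mathfrak{m}/\mathfrak{m}^2$ share the same multiset of degrees, the two Hilbert series agree, forcing $\phi$ to be an isomorphism. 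The converse is easier: polynomial generators automatically span $\mathfrak{m}/\mathfrak{m}^2$ by graded Nakayama, and a dimension count promotes spanning to basis.

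For (2), set $V_X := \mathfrak{m}_X/\mathfrak{m}_X^2$ for $X \in \{C, D, R\}$, where $\mathfrak{m}_X$ is the graded maximal ideal of $X$. The key technical step is to show that $V_C \to V_R$ and $V_D \to V_R$ are injective. Fixing polynomial generators $c_1,\ldots,c_{n-t}$ of $C$, the hypothesis $R = C[z_1,\ldots,z_t]$ says that $c_1,\ldots,c_{n-t},z_1,\ldots,z_t$ is a full set of polynomial generators of $R$, so any $x \in \mathfrak{m}_C \cap \mathfrak{m}^2$, being a polynomial in the $c_i$ alone with no constant term, must also have vanishing linear part in the $c_i$ (its linear $z$-coefficients are already zero), whence $x \in \mathfrak{m}_C^2$; the same reasoning handles $V_D \to V_R$, and then $V_C \to V_D$ is automatically injective. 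Now choose a homogeneous basis of $V_C$, extend it to a homogeneous basis of $V_D$ by adjoining classes $\bar u_{n-t+1},\ldots,\bar u_{n-r}$, and extend once more to a homogeneous basis of $V_R$; lifting each added class to $\mathfrak{m}_D$ or $\mathfrak{m}$ respectively and applying (1) to each of $C$, $D$, $R$ in turn delivers the required $u_1,\ldots,u_n$.

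Parts (3) and (4) follow quickly. For (3), if $u_1,\ldots,u_{n-t}$ is any homogeneous polynomial generating set of $C$, then by (1) their classes form a graded basis of $V_C$, and any two graded bases of a finite-dimensional graded vector space share the same multiset of degrees. For (4), $\sigma(C) \subseteq \sigma(D)$ is immediate from the chain of generators constructed in (2). The Hilbert-series formula $h_C(t) = \prod_{d \in \sigma(C)}(1-t^d)^{-1}$ follows from Lemma~\ref{tensor} applied to the tensor decomposition of $C$ as a polynomial algebra, and the claimed divisibility of $h_D$ by $h_C$ then simply records the multiset inclusion. Finally, if $\sigma(C) = \sigma(D)$ then $h_C(t) = h_D(t)$, and the graded inclusion $C \subseteq D$ combined with matching dimensions in each graded degree forces $C(i) = D(i)$ for all $i$, hence $C = D$. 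The only delicate step in the whole argument is the injectivity of $V_C \to V_R$ in part (2), where one genuinely needs the polynomial-extension hypothesis; everything else is routine graded commutative algebra.
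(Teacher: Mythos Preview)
Your proof is correct and follows essentially the same route as the paper's. The only minor variation is in the final step of (1): where the paper invokes Krull (or GK) dimension to conclude that the surjection $k[Y_1,\ldots,Y_n]\twoheadrightarrow R$ is an isomorphism, you instead compare Hilbert series, having first observed that the multiset of degrees of the $y_i$ must coincide with that of the original $x_i$ since both index graded bases of $\mathfrak{m}/\mathfrak{m}^2$. Both arguments are equally short; otherwise your treatment of (2)--(4), including the explicit verification that $\mathfrak{m}_C\cap\mathfrak{m}^2=\mathfrak{m}_C^2$ and the chain $V_C\hookrightarrow V_D\hookrightarrow V_R$, matches the paper's reasoning.
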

\begin{proof}(1)$\Rightarrow$: This is trivial.

$\Leftarrow$: Let $y_1, \ldots , y_n$ be homogeneous elements of $R$ whose images \emph{modulo} $\mathfrak{m}^2$ form a $k$-basis for $\mathfrak{m}/\mathfrak{m}^2$. Define $A = k\langle y_1, \ldots , y_n\rangle$. Suppose that $A\subsetneqq R$, and let $s$ be minimal such that
$$ A(s) \subsetneqq R(s). $$
Since $R(s) \cap \mathfrak{m}^2$ is spanned by products of pairs of elements in $\{R(i) : i < s \}$,
\begin{equation}\label{hype} R(s) \cap \mathfrak{m}^2 = A(s) \cap \mathfrak{m}^2 \subseteq A(s). \end{equation}
Choose $x \in R(s) \setminus A(s)$, so $x  \notin \mathfrak{m}^2$. By hypothesis, there exist $\lambda_j \in k,$ $1 \leq j \leq n$, such that
$$ \widehat{x} := x - \sum_j \lambda_j y_j \in \mathfrak{m}^2.
$$
Clearly, for all $j$ with $\lambda_j \neq 0$, the degree of $y_j$ is $s$. But by (\ref{hype}) this forces $\widehat{x} \in A(s)$, so that $x \in A(s)$, contradicting the choice of $x$. Therefore we must have $A = R$, and finally considering Krull (or equivalently Gel'fand-Kirillov) dimension shows that $y_1, \ldots , y_n$ are polynomial generators of $A$.

(2) First, note that since $R = C \otimes_k k[z_1, \ldots , z_t]$, $\mathfrak{m}^2 \cap C = (\mathfrak{m} \cap C)^2$. Choose homogeneous elements $u_1, \ldots , u_{n-t}$ in $\mathfrak{m} \cap C$ whose images \emph{modulo} $\mathfrak{m}^2 \cap C = (\mathfrak{m} \cap C)^2$ form a $k$-basis for $\mathfrak{m} \cap C/\mathfrak{m}^2 \cap C$. Thus, by part (1), $C = k[u_1, \ldots , u_{n-t}]$. Then $\mathfrak{m} \cap C/\mathfrak{m}^2 \cap C$ embeds in $\mathfrak{m} \cap D/\mathfrak{m}^2 \cap D = \mathfrak{m} \cap D/(\mathfrak{m} \cap D)^2$, so we can extend $\{u_1, \ldots , u_{n-t}\}$ to a set of homogeneous elements $\{u_1, \ldots , u_{n-r}\}$ of $D$, of positive degree, whose images \emph{modulo} $\mathfrak{m}^2 \cap D$ provide a $k$-basis for $\mathfrak{m} \cap D/(\mathfrak{m} \cap D)^2$. By (1) again, $D = k[u_1, \ldots , u_{n-r}].$ A further repeat of the argument extends the set to homogeneous polynomial generators $\{u_1, \ldots , u_n\}$ of $R$.

(3) This is clear from (1) applied to $C$ rather than $R$, since the degrees and dimensions of the homogeneous components of $\mathfrak{m} \cap C/(\mathfrak{m} \cap C)^2$ are fixed.

(4) This is immediate from (2) and (3). The equivalent formulation in terms of Hilbert series follows from Lemma \ref{tensor}.
\end{proof}

\subsection{The signature and the lantern} \label{siglant}

Lemma \ref{hilbkey}(3) ensures that the following definitions make sense. That the previous parts of the definition apply to $T$ as in part (4) follows from Lemma \ref{Tissiglan}.

\begin{definition}\label{sigdef}
\begin{enumerate}
\item Let $R$ be a connected $\mathbb{N}$-graded polynomial algebra in $n$ variables, $n < \infty$. The $\emph{signature}$ of $R$, denoted by $\sigma(R)$, is the ordered $n$-tuple of degrees of the homogeneous generators.
\item Let $A$ be an $\mathbb{N}$-filtered algebra, with filtration $\mathcal{A}=\{A_{i}\}_{i\geq{0}}$, such that the associated graded algebra $\gr{A}=\bigoplus_{i}A_{i}/A_{i-1}$ is a connected $\mathbb{N}$-graded polynomial algebra in $n$ variables, $n < \infty$. The $\mathcal{A}-\emph{signature}$ of $A$, denoted by $\sigma_{\mathcal{A}}(A)$, is the signature of $\gr{A}$ in the sense of (1). Where no confusion is likely refer simply to the $\emph{signature}$ of $A$, denoted $\sigma(A)$.
\item With $A$, $\mathcal{A}$ and $\mathrm{gr}A$ as in (2), suppose that $\mathrm{gr}A$ has $m_i$ homogeneous generators of degree $d_{i}, \; 1 \leq i \leq t$, with $1 \leq d_{1} < \cdots < d_t$, so that $\sum_{i=1}^t m_i = n$. Then we write
    $$ \sigma(A) = (d_{1}^{(m_{1})},\ldots, d_{t}^{(m_{t})}).$$
    When  $m_{i}=1$,  the exponent $(m_{i})$ is omitted.
\item Let $k$ be an algebraically closed field of characteristic 0. Let $H$ be a connected Hopf $k$-algebra of finite Gel'fand-Kirillov-dimension, with a left coideal subalgebra $T$, with coradical filtration $\mathcal{T}$ as defined in $\S$\ref{coideal}. Then the $\emph{signature}$ of $T$, denoted $\sigma(T)$, is the $\mathcal{T}$-signature of $T$ as defined in (2).
\end{enumerate}
\end{definition}
Dualising the above definition, as follows, gives certain benefits, as we shall see. The definition of the \emph{lantern} of a connected Hopf $k$-algebra, and the key parts (1), (2), (4) and the corollary of Proposition \ref{litup}, are due to Wang, Zhang and Zhuang, \cite[Definition 1.2 and Lemma 1.3]{WZZ4}. Proofs are given again here for the reader's convenience, in the course of extending their definition.

\begin{definition}\label{lanterndef}
\begin{enumerate}
\item Let $R=\bigoplus_{i=0}^{\infty} R(i)$ denote a connected $\mathbb{N}$-graded polynomial algebra in $n$ variables, $n<\infty$. Let $\mathcal{D}_{R} := \bigoplus_{i=0}^{\infty} R(i)^*$ be the \emph{graded dual} of $R$, so $\mathcal{D}_R$ is a graded cocommutative coalgebra.
\item The \emph{lantern} $\mathfrak{L}(R)$ of $R$  is the space  of primitive elements of $\mathcal{D}_R$.  That is,
\[
\mathfrak{L}(R):=P(\mathcal{D}_{R}),
\]
a graded subcoalgebra of $\mathcal{D}_R$. Note that $\mathfrak{L}(R)$ is non-zero provided $R \neq k$, since then $k=\mathcal{D}_{R}(0)\subsetneqq \mathcal{D}_{R}$).
\item Let $A$ be an $\mathbb{N}$-filtered $k$-algebra, with filtration $\mathcal{A}=\{A_{n}\}_{n\geq 0}$, satisfying the hypotheses of Definition $\ref{sigdef}(2)$. Define the $\mathcal{A}-\emph{lantern}$ of  $A$, denoted by $\mathfrak{L}_{\mathcal{A}}(A)$, to be the lantern of the graded polynomial algebra $\gr{A}$. Where no confusion is likely, shorten notation to $\mathfrak{L}(A)$.
\item Let $k$ be an algebraically closed field of characteristic 0. Let $H$ be a connected Hopf $k$-algebra of finite Gel'fand-Kirillov-dimension, with a left coideal subalgebra $T$ with coradical filtration $\mathcal{T}$, as in $\S$\ref{coideal}. Then the $\emph{lantern}$ $\mathcal{L}(T)$ of $T$ is the $\mathcal{T}$-lantern of $T$ as defined in (3).
\end{enumerate}
\end{definition}

\begin{proposition}\label{litup} Let $H$ be a connected Hopf $k$-algebra of finite Gel'fand-Kirillov dimension $n$, and let $T$ be a left coideal subalgebra of $H$, with $\mathrm{GKdim}T = m.$ Let $\sigma(H) = (d_{1}^{(m_{1})},\ldots, d_{t}^{(m_{t})})$.
\begin{enumerate}
\item The graded dual $\mathcal{D}_H = (\mathrm{gr}H)^*$ is a sub-Hopf algebra of the finite dual $(\mathrm{gr}H)^{\circ}$; as such, it is isomorphic to the enveloping algebra $U(\mathcal{L}(H))$.
\item The lantern $\mathcal{L}(H)$ is a positively graded Lie algebra, with
$$ \mathcal{L}(H) = \bigoplus_{i=1}^t \mathcal{L}(H)(d_i), \;\; \mathrm{dim}_k\mathcal{L}(H)(d_i) = m_i, \; \; \mathrm{dim}_k \mathcal{L}(H) = n. $$
\item Let $W$ be the unipotent group of dimension $n$ such that $\mathrm{gr}H \cong \mathcal{O}(W).$ Then $\mathcal{L}(H)$ is the Lie algebra of $W$.
\item $\mathcal{L}(H)$ is generated in degree 1.
\item Let $Y_T$ be the closed subgroup of $W$ defined by the Hopf ideal $\mathrm{gr}T^+ \mathrm{gr}H$ of $\mathrm{gr}H$. Let $\mathfrak{y}_T$ be the Lie subalgebra of $\mathcal{L}(H)$ corresponding to $Y_T$. The graded dual $\mathcal{D}_T$ is a graded left $U(\mathcal{L}(H))$-module, isomorphic as such to $U(\mathcal{L}(H))/U(\mathcal{L}(H))\mathfrak{y}_T$.
\item The lantern $\mathcal{L}(T)$ is the graded quotient $\mathcal{L}(H)/\mathfrak{y}_T$.
\item The following are equivalent.
\begin{enumerate}
\item{
$\sigma(T)=(e_{1}^{(r_{1})},\ldots, e_{s}^{(r_{s})})$.}

\item{
$\mathrm{dim}_k\mathfrak{L}(T)(e_{i})=r_{i}$ for $i\geq 1$.}
\item{
$h_{\mathrm{gr}T}(t)=\prod_{i=1}^{s}\frac{1}{(1-t^{e_{i}})^{r_{i}}}$.}

\end{enumerate}

\end{enumerate}
\end{proposition}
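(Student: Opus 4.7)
The plan is to push everything across Theorem \ref{laz}: view $\mathrm{gr}H$ as the coordinate ring $\mathcal{O}(W)$ of a unipotent algebraic $k$-group $W$ of dimension $n$, and interpret $\mathcal{D}_H$, $\mathcal{L}(H)$ and their $T$-analogues via the Lie theory of $W$ and its closed subgroup $Y_T$. The identification of $\mathcal{D}_H$ with a Hopf algebra of distributions and of $\mathcal{L}(T)$ with a tangent-space dual will drive the argument, and the chosen-generators trick of Lemma \ref{hilbkey}(2) will tie the $T$-picture to the $H$-picture cleanly.

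For (1)--(3), I first note that each $\phi\in \mathrm{gr}H(i)^{*}$ vanishes on $\mathfrak{m}^{i+1}$, where $\mathfrak{m}=\bigoplus_{j>0}\mathrm{gr}H(j)$, and $\mathfrak{m}^{i+1}$ has finite codimension; hence $\mathcal{D}_H\subseteq(\mathrm{gr}H)^{\circ}$, and $\mathcal{D}_H$ is a graded connected cocommutative Hopf algebra. By Cartier-Gabriel-Kostant (\cite[Theorem 5.6.5]{Mont}), $\mathcal{D}_H\cong U(P(\mathcal{D}_H))=U(\mathcal{L}(H))$, giving (1). A functional is primitive precisely when it vanishes on $k\oplus\mathfrak{m}^{2}$, so $\mathcal{L}(H)$ is the graded dual of $\mathfrak{m}/\mathfrak{m}^{2}$; Lemma \ref{hilbkey}(3) applied to $\mathrm{gr}H$ then gives $\dim_{k}\mathcal{L}(H)(d_{i})=m_{i}$, proving (2). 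Part (3) is the standard identification $\mathrm{Lie}(W)=(\mathfrak{m}_{e}/\mathfrak{m}_{e}^{2})^{*}$, since the contracting $\mathbb{G}_{m}$-action induced by the grading has unique fixed point $e$, so $\mathfrak{m}_{e}=\mathfrak{m}$.

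For (4) I would invoke the Andruskiewitsch-Schneider duality (\cite[Definition 1.13 and associated remarks]{And}): a connected graded Hopf algebra is coradically graded exactly when its graded dual is generated in degree $1$ as an algebra. As $\mathrm{gr}H$ is coradically graded by the discussion preceding Theorem \ref{beautiful}, $U(\mathcal{L}(H))=\mathcal{D}_H$ is algebra-generated in degree $1$; by PBW this forces $\mathcal{L}(H)$ to be Lie-generated by $\mathcal{L}(H)(1)$. For (5)--(7), Theorem \ref{unipotent}(4) applied to $\mathrm{gr}T\subseteq \mathrm{gr}H$ shows that $\mathrm{gr}T^{+}\mathrm{gr}H$ is a Hopf ideal defining $Y_T\subseteq W$, with $\mathrm{gr}T\cong\mathcal{O}(W/Y_T)$. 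Lemma \ref{hilbkey}(2) produces homogeneous polynomial generators $u_{1},\dots,u_{n}$ of $\mathrm{gr}H$ with $u_{1},\dots,u_{m}$ generating $\mathrm{gr}T$ and the images of $u_{m+1},\dots,u_{n}$ generating $\mathcal{O}(Y_T)$. For (5), I would identify $\mathcal{D}_T$ with the module of distributions at the identity coset of $W/Y_T$ which, by the standard Lie-theoretic computation, is $U(\mathcal{L}(H))\otimes_{U(\mathfrak{y}_T)}k= U(\mathcal{L}(H))/U(\mathcal{L}(H))\mathfrak{y}_T$; gradings are preserved because the isomorphism is induced by the graded vector-space factorisation $\mathrm{gr}H\cong \mathrm{gr}T\otimes\mathcal{O}(Y_T)$ from Lemma \ref{Tissiglan}. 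For (6), I dualise the injection $(\mathfrak{m}\cap\mathrm{gr}T)/(\mathfrak{m}\cap\mathrm{gr}T)^{2}\hookrightarrow \mathfrak{m}/\mathfrak{m}^{2}$ (injectivity is immediate from the chosen $u_{i}$) to obtain a graded surjection $\mathcal{L}(H)\twoheadrightarrow\mathcal{L}(T)$ whose kernel is the annihilator of $u_{1},\dots,u_{m}$, namely the span of the duals of $u_{m+1},\dots,u_{n}$, which is exactly $\mathfrak{y}_T$. Part (7) is then a dimension count: the number of degree-$e_{i}$ polynomial generators of $\mathrm{gr}T$ equals $\dim_{k}((\mathfrak{m}\cap\mathrm{gr}T)/(\mathfrak{m}\cap\mathrm{gr}T)^{2})(e_{i})$ by Lemma \ref{hilbkey}(3), equals $\dim_{k}\mathcal{L}(T)(e_{i})$ by graded duality, and the Hilbert-series formula follows from Lemma \ref{tensor}.

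The main obstacle I anticipate is the module-theoretic assertion in (5): one must verify that the left $U(\mathcal{L}(H))$-action on $\mathcal{D}_T$ coming from the left $\mathrm{gr}H$-comodule structure on $\mathrm{gr}T$ matches the left-invariant differential-operator action on $\mathcal{O}(W/Y_T)^{*}$. A fallback is to prove (6) first (as above, purely linearly), and then derive the module isomorphism in (5) a posteriori by combining (6) with the PBW/graded factorisation $\mathcal{D}_H\cong \mathcal{D}_T\otimes U(\mathfrak{y}_T)$ supplied by Lemma \ref{Tissiglan}.
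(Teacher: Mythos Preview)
Your proposal is correct and follows essentially the same line as the paper: identification of $\mathcal{D}_H$ with $U(\mathcal{L}(H))$ via the cocommutative structure, $\mathcal{L}(H)$ as the graded dual of $\mathfrak{m}/\mathfrak{m}^2$, the citation of \cite{And} for generation in degree~1, and the Lemma~\ref{hilbkey} dimension counts for (6) and (7) all match the paper's argument almost exactly.

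The one place you diverge is (5). The paper does not go through distributions on $W/Y_T$; instead it writes down the convolution action $uf(t)=u(t_1)f(t_2)$ directly, observes that the restriction map $\rho:\mathcal{D}_H\to\mathcal{D}_T$ is a $U(\mathcal{L}(H))$-module map with $\mathfrak{y}_T\subseteq\ker\rho$, and then finishes with exactly the graded PBW dimension count you describe as your fallback. So your anticipated obstacle is real but already resolved by your own fallback, and that fallback \emph{is} the paper's proof; there is no need for the ``standard Lie-theoretic computation'' on distributions, which would in any case require an extra reference to justify in this algebraic setting.
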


\begin{proof}(1) Let $\mathfrak{m} = \oplus_{i \geq 1} H(i)$ be the graded maximal ideal of $\mathrm{gr}H$. By construction,
$$ \mathcal{D}_H = \{ f \in (\mathrm{gr}H)^{\circ} : f(\mathfrak{m}^j) = 0 \textit{ for } j \gg 0 \}. $$
By \cite[Proposition 9.2.5]{Mont}, $\mathcal{D}_H$ is a sub-Hopf algebra of $(\mathrm{gr}H)^{\circ}$; more precisely,
$$ \mathcal{D}_H = U(P(\mathcal{D}_H)),$$
the enveloping algebra of the Lie algebra of primitive elements of $\mathcal{D}_H$. Note also that, by Definition \ref{lanterndef}(1) and \cite[Lemma 9.2.4]{Mont}.
\begin{equation}\label{heart} \mathcal{L}(H) = P(\mathcal{D}_H) = \{f \in \mathcal{D}_H : f(\mathfrak{m}^2 + k1) = 0 \}.
\end{equation}

(2) Since $\mathcal{D}_H$ is a graded Hopf algebra, its subspace $\mathcal{L}(H)$ of primitive elements is a graded Lie algebra: for $d \geq 1$,
$$ \mathcal{L}(H)(d) =  \{f \in (H(d))^* : f(\mathfrak{m}^2 \cap H(d)) = 0 \}. $$
Thus, for all $d \geq 1$,
\begin{equation}\label{huh} \mathrm{dim}_k \mathcal{L}(H)(d) = \mathrm{dim}_k(H(d)/\mathfrak{m}^2 \cap H(d)) = \mathrm{dim}_k((\mathfrak{m}/\mathfrak{m}^2)(d)). \end{equation}
By Lemma \ref{hilbkey}(3), this completes the proof of (2).

(3) This follows from (\ref{heart}) and the definition of the Lie algebra of an algebraic group, \cite[$\S$9.1]{Humphreys}.

(4) This is \cite[Lemma 5.5]{And}.

(5) Appealing again to \cite[$\S$9.1]{Humphreys} and noting (\ref{heart}),
\begin{equation}\begin{aligned}\label{grunt}\mathfrak{y}_T =& \{f \in \mathcal{L}(H) : f((\mathrm{gr}T)^+ \mathrm{gr}H) = 0 \}\\ =& \{f \in \mathcal{D}_H : f((\mathrm{gr}T)^+ \mathrm{gr}H + \mathfrak{m}^2 + k1) = 0 \}.\end{aligned}\end{equation}
The convolution product makes $\mathcal{D}_T$ into a left module over $\mathcal{D}_H = U(\mathcal{L}(H))$: namely, for $u \in U(\mathcal{L}(H))$, $f \in \mathcal{D}_T$ and $t \in \mathrm{gr}T$,
$$ uf(t) = u(t_1)f(t_2). $$
It is clear that, with respect to this action, the restriction map
$$ \rho :U(\mathcal{L}(H)) = (\mathrm{gr}H)^* \longrightarrow \mathcal{D}_T = (\mathrm{gr}T)^* $$
is a homomorphism of left $U(\mathcal{L}(H)$-modules. From (\ref{grunt}), $\mathfrak{y}_T \subseteq \mathrm{ker}\rho$, and hence
\begin{equation}\label{kernel} U(\mathcal{L}(H))\mathfrak{y}_T \subseteq \mathrm{ker}\rho. \end{equation}
It remains to check that equality holds in (\ref{kernel}). Take a graded basis $\{u_1, \ldots , u_n \}$ of $\mathcal{L}(H)$, where $\{u_1, \ldots , u_m\}$ is a dual basis to a set $x_1, \ldots x_m$ of graded polynomial generators of $\mathrm{gr}T$ and where $\{u_{m+1}, \ldots , u_n \}$ forms a basis of $\mathfrak{y}_T$. Let $x_i \in H(d_i)$, so $u_i \in H(d_i)^*$ for $i = 1, \ldots , n.$ Thus $U(\mathcal{L}(H)) = \mathcal{D}_H$ is graded, where, for $j \geq 0$, a basis of $U(\mathcal{L}(H))(j)$ is given by the ordered monomials $u_1^{r_1} \ldots u_n^{r_n}$ for which $\sum_{i=1}^n r_i d_i = j$. Then $U(\mathcal{L}(H)\mathfrak{y}_T(j)$ is spanned by those ordered monomials in the $u_i$ of degree $j$ for which $r_i > 0$ for some $i > m$. Comparing the dimensions of $\mathcal{D}_T(j)$ with $(U(\mathcal{L}(H)/U(\mathcal{L}(H)\mathfrak{y})(j)$ for $j \geq 0$ now yields equality in (\ref{kernel}).

(6) It follows from (\ref{huh}) and (\ref{grunt}) that
 \begin{equation}\begin{aligned} \label{hohum} (\mathcal{L}(H)/\mathfrak{y})(d) \cong& \mathcal{L}(H)(d)/\mathfrak{y}(d)\\ \cong& \{f \in (H(d))^* : f((\mathfrak{m}^2 + (\mathrm{gr}T)^+ \mathrm{gr}H) \cap H(d)) = 0 \}\\ \cong& ((\mathrm{gr}T)^+/(\mathfrak{m}^2 \cap (\mathrm{gr}T)^+)(d))^*. \end{aligned} \end{equation}
Since the final term above is $\mathcal{L}(T)(d)$, this proves (6).

(7) The equivalence of (a) and (c) was noted in Lemma \ref{hilbkey}(4) and its proof. The equivalence of (a) and (b) is (\ref{huh}).
\end{proof}

In the literature, a finitely generated positively graded (and hence nilpotent) Lie algebra which is generated in degree 1 is called a \emph{Carnot Lie algebra}; they are important in a number of branches of mathematics, for example in Riemannian geometry. For a brief review with references, one can consult for instance \cite{Corn}

The \emph{classical} part of the picture described by the theorem, that is the \emph{connected cocommutative} aspect familiar from basic Lie theory, is as follows.

\begin{corollary} Retain the notation of the above theorem. Then the following are equivalent:
\begin{enumerate}
\item $\mathcal{L}(H)$ is abelian;
\item $\mathcal{L}(H) = \mathcal{L}(H)(1)$;
\item $H$ is cocommutative;
\item $H \cong U(\mathfrak{g})$ as a Hopf algebra, for some $n$-dimensional Lie algebra $\mathfrak{g}$;
\item $\mathrm{gr}H$ is cocommutative;
\item $W$ is abelian, $W \cong (k,+)^n$;
\item $\sigma (H) = (1^{(n)}).$
\end{enumerate}
\end{corollary}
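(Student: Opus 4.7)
The plan is to establish the cycle
\[
(4)\Rightarrow(3)\Rightarrow(5)\Rightarrow(6)\Rightarrow(1)\Rightarrow(2)\Leftrightarrow(7)\Rightarrow(4),
\]
with almost all of the links being short consequences of machinery already developed in $\S5$, and the real content concentrated in the closing implication $(7)\Rightarrow(4)$.

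The easy part of the cycle runs as follows. The implication $(4)\Rightarrow(3)$ is built into the definition of the coproduct on an enveloping algebra. For $(3)\Rightarrow(5)$, cocommutativity of $\Delta$ on $H$ passes to each associated graded piece $H_n/H_{n-1}$, so $\mathrm{gr}\,H$ is cocommutative. For $(5)\Rightarrow(6)$, I would combine Theorem \ref{laz} with Proposition \ref{litup}(3) to write $\mathrm{gr}\,H\cong\mathcal{O}(W)$ for a connected unipotent $k$-group $W$ of dimension $n$; cocommutativity of $\mathcal{O}(W)$ is equivalent to $W$ being abelian as a group, and a connected abelian unipotent algebraic $k$-group in characteristic $0$ is isomorphic to $(k,+)^n$ (standard; e.g.\ via the exponential map of a unipotent group in characteristic $0$). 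The implication $(6)\Rightarrow(1)$ is then immediate from Proposition \ref{litup}(3), because the Lie algebra of an abelian algebraic group is abelian. For $(1)\Rightarrow(2)$, the point is Proposition \ref{litup}(4): $\mathcal{L}(H)$ is generated as a Lie algebra by its degree-$1$ part, so an abelian $\mathcal{L}(H)$ collapses to $\mathcal{L}(H)(1)$. Finally, $(2)\Leftrightarrow(7)$ is read directly off parts (a) and (b) of Proposition \ref{litup}(7).

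The step I expect to be the main obstacle is the closing implication $(7)\Rightarrow(4)$, since here one must pass from purely graded/dimension-theoretic data back to a \emph{Hopf-algebra} isomorphism with $U(\mathfrak{g})$. The plan is to use the Chain Lemma to avoid any direct deformation-theoretic work. Assuming $\sigma(H)=(1^{(n)})$, the polynomial algebra $\mathrm{gr}\,H$ has a set of $n$ homogeneous polynomial generators, all of degree $1$, so by Lemma \ref{hilbkey}(1) one has $\dim_k \mathrm{gr}\,H(1)=n$. Since $H$ is connected, $\mathrm{gr}\,H(1)=H_1/H_0\cong P(H)$, giving $\dim_k P(H)=n$. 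Hence $U(P(H))$ is a Hopf subalgebra (in particular a coideal subalgebra) of $H$ with
\[
\mathrm{GKdim}\,U(P(H))\;=\;\dim_k P(H)\;=\;n\;=\;\mathrm{GKdim}\,H.
\]
Proposition \ref{chain lemma} then forces $U(P(H))=H$, which is (4). This closes the cycle and proves the corollary.
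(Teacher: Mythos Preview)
Your proof is correct, and in fact your cycle is more carefully closed than the paper's. The paper argues the equivalences in pairs and small chains (e.g.\ $(3)\Leftrightarrow(4)$ via Cartier--Kostant--Gabri\`el, $(5)\Leftrightarrow(6)$ by duality, $(2)\Leftrightarrow(7)$ from Proposition~\ref{litup}), together with $(1)\Rightarrow(2)$, $(3)\Rightarrow(5)$, and $(5)\Rightarrow(7)$; its argument labelled ``$(7)\Rightarrow(4)$'' actually only shows that $\mathrm{gr}\,H$ is generated by primitives and hence cocommutative, i.e.\ establishes $(7)\Rightarrow(5)$, leaving the passage back to $(4)$ (and to $(1)$) implicit.

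Your route differs precisely at this point: for $(7)\Rightarrow(4)$ you observe that $\sigma(H)=(1^{(n)})$ forces $\dim_k P(H)=n$, so the Hopf subalgebra $U(P(H))\subseteq H$ has the same Gel'fand--Kirillov dimension as $H$, and then you invoke the Chain Lemma (Proposition~\ref{chain lemma}) to conclude $U(P(H))=H$. This is a genuinely different (and cleaner) argument than the paper's, and it has the advantage of yielding $(4)$ directly without passing through cocommutativity of $\mathrm{gr}\,H$. Your $(6)\Rightarrow(1)$ via Proposition~\ref{litup}(3) also makes explicit a step the paper leaves to the reader.
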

\begin{proof} (1)$\Rightarrow$ (2): If $\mathcal{L}(H)$ is abelian, then $\mathcal{L}(H) = \mathcal{L}(H)(1)$ by Proposition \ref{litup}(4).

(3)$\Leftrightarrow$(4) This is a special case of the Cartier-Kostant-Gabri\`{e}l theorem \cite[Theorem 5.6.5]{Mont}.

(5)$\Leftrightarrow$(6): That $W$ is abelian if and only if its coordinate ring $\mathrm{gr}H$ is cocommutative is immediate by duality. That, in this case, $W \cong (k, +)^n$ is a consequence of the structure of abelian algebraic groups in charactersitic 0, \cite[Corollary 17.5, Exercises 15.11, 17.7]{Humphreys}.

(2)$\Leftrightarrow$(7): This is a special case of Proposition \ref{litup}.

(7)$\Rightarrow$(4): Suppose $\sigma (H) = (1^{(n)})$. Then $\mathrm{gr}H$ is generated by elements of $H(1)$, that is, by primitive elements. Hence $\mathrm{gr}H$ is cocommutative.

(3)$\Rightarrow$(5): Trivial.

(5)$\Rightarrow$(7): If $\mathrm{gr}H$ is cocommutative, then by the Cartier-Kostant-Gabri\`{e}l theorem \cite[Theorem 5.6.5]{Mont}it is an enveloping algebra as a Hopf algebra, so generated by the space $H(1)$ of primitive elements. That is, $\sigma (H) = (1^{(n)})$.
\end{proof}

We return to the family of examples $B(\lambda)$ to illustrate aspects of Proposition \ref{litup}.

\begin{example}\label{lambdasig} \textbf{Signature and lantern of} $B(\lambda)$. The connected Hopf algebras $B(\lambda)$, for $\lambda \in k$, were recalled from \cite{Zhuang} in \S\ref{smallHopf}. They constitute one of two infinite families of connected Hopf $k$-algebras of Gel'fand-Kirillov dimension 3. Starting from the description of the coideal subalgebras of $B(\lambda)$ in Proposition \ref{Blambda}, one easily calculates the following facts:
\begin{enumerate}
\item $\sigma (B(\lambda)) = (1^{(2)},2)$.
\item $\mathcal{L}(B(\lambda))$ is the Heisenberg Lie algebra of dimension 3; equivalently, the group $W$ with coordinate ring $\mathrm{gr}B(\lambda)$ is the 3-dimensional Heisenberg group. Here, $\mathrm{gr}B(\lambda) = k[X,Y,Z]$ in the obvious ``lazy" notation for lifts of elements to the associated graded algebra.
\item The unique two-dimensional Hopf subalgebra of $B(\lambda)$, namely $k\langle X, Y : [X,Y] = Y \rangle$, labelled $L_0 = R_0$ in Proposition \ref{Blambda}, has signature $(1^{(2)})$, with $\mathrm{gr}L_0 = k[X,Y]$.
\item The remaining two-dimensional left and right coideal subalgebras $L_{\beta}, R_{\beta}$, for $\beta \in \{\infty\} \cup (k\setminus\{0\})$, have signature $(1,2)$. For all such $\beta$, $\mathrm{gr}L_{\beta} = k[Y,Z]$ and $\mathrm{gr}R_{\beta} = k[Y,Z- XY]$.
\end{enumerate}
\end{example}

\subsection{Numerology}\label{numerology}

The first of the two results in this subsection assembles what we know about the signature of a connected Hopf algebra $H$. The second theorem gives a parallel account of the known numerical constraints on the signature of a quantum homogeneous space of such an $H$. For convenience, some results obtained earlier in the paper are restated here.

\begin{theorem}\label{Hopfnumbers} Let $k$ be an algebraically closed field of characteristic 0, and let $H$ be a connected Hopf $k$-algebra of finite Gel'fand-Kirillov dimension $n$. Let
$$ \sigma (H) = (d_1^{(m_1)}, \ldots , d_u^{(m_u)}), \textit{  so  } h_{\mathrm{gr}H}(t) = \prod_{i=1}^u \frac{1}{(1-t^{d_i})^{m_i}}. $$
\begin{enumerate}
\item (Wang, Zhang, Zhuang, \cite[Lemma 1.3(d)]{WZZ4}) If $n > 1$, then $m_1 \geq 2$. That is, $\mathrm{dim}_k P(H) \geq 2$ if $\mathrm{GKdim}H \geq 2.$
\item (NO GAPS) $\{d_1, \ldots , d_t\} = \{1, \ldots , t \}.$
\item For all $i = 1, \ldots , t$,
$$m_i \leq \frac{1}{i}\sum_{d|i}\mu(d)m_{1}^{(i/d)},$$
where $\mu:\mathbb{N}\rightarrow \mathbb{N}$ is the Mobius function.
\end{enumerate}
\end{theorem}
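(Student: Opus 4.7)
The plan is to deduce all three parts from structural properties of the graded Lie algebra $\mathcal{L}(H)$ that have already been assembled in Proposition \ref{litup}. The key facts I will exploit are: $\mathcal{L}(H)$ is a positively graded Lie algebra with $\dim_k \mathcal{L}(H)(d_i) = m_i$ and $\dim_k \mathcal{L}(H) = n$, and (most crucially) $\mathcal{L}(H)$ is generated as a Lie algebra by its degree-one component $\mathcal{L}(H)(1)$. This Carnot property means $\mathcal{L}(H)$ is a graded Lie-algebra quotient of the free Lie algebra $\mathrm{Lie}(V)$ on the $m_1$-dimensional space $V := \mathcal{L}(H)(1)$ placed in degree $1$. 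Everything else is bookkeeping.

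For (1), I argue by contrapositive. If $m_1 \leq 1$ then $V$ has dimension at most one, so $\mathrm{Lie}(V)$ is abelian of dimension at most one, and hence so is any quotient of it. Therefore $n = \dim_k \mathcal{L}(H) \leq 1$.

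For (2), I would show that the support $\{d \geq 1 : \mathcal{L}(H)(d) \neq 0\}$ is an initial segment of $\mathbb{Z}_{\geq 1}$. Because $\mathcal{L}(H)$ is generated in degree one, induction on bracket length (together with the Jacobi identity to peel off a degree-one generator) gives
\[ \mathcal{L}(H)(d) \;=\; [\mathcal{L}(H)(1),\, \mathcal{L}(H)(d-1)] \qquad (d \geq 2). \]
Hence once $\mathcal{L}(H)(d_0) = 0$ for some $d_0 \geq 1$, a trivial induction forces $\mathcal{L}(H)(d) = 0$ for every $d \geq d_0$. Combined with $d_1 = 1$ from Theorem \ref{siglant}(2), this yields $\{d_1, \ldots , d_t\} = \{1, \ldots, t\}$ as required.

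For (3), I would use the same surjection $\mathrm{Lie}(V) \twoheadrightarrow \mathcal{L}(H)$, now taking graded dimensions; invoking part (2) to identify $d_i$ with $i$,
\[ m_i \;=\; \dim_k \mathcal{L}(H)(i) \;\leq\; \dim_k \mathrm{Lie}(V)(i). \]
The classical Witt dimension formula for free Lie algebras evaluates the right-hand side as $\tfrac{1}{i}\sum_{d \mid i} \mu(d)\, m_1^{i/d}$, completing the bound. The only non-routine input is Witt's formula itself, which is standard. The real conceptual content of the whole theorem is already packed into the generation statement of Proposition \ref{litup}(4), so the main obstacle was establishing that fact; once it is in hand, the rest is a short exercise in graded Lie theory.
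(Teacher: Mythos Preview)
Your proposal is correct and follows essentially the same route as the paper: all three parts are deduced from the Carnot property of $\mathcal{L}(H)$ established in Proposition~\ref{litup}(4), with (1) from the triviality of a one-generated Lie algebra, (2) from the identity $\mathcal{L}(H)(d)=[\mathcal{L}(H)(1),\mathcal{L}(H)(d-1)]$, and (3) from the graded surjection of the free Lie algebra together with Witt's formula. The paper's proof is slightly terser but makes exactly the same moves.
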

\begin{proof} (1) Since $\mathcal{L}(H)$ is generated in degree 1 by Proposition \ref{litup}(4), $\mathcal{L}(H)$ and hence $H$ would be one-dimensional if $\dim_k\mathcal{L}(H) = 1$.

(2) A lemma on $\mathbb{N}$-graded Lie algebras which are generated in degree 1, proved easily by induction, implies that, for all $i \geq 1$,
$$ \mathcal{L}(H)(i+1) = [\mathcal{L}(H)(1),\mathcal{L}(H)(i)]. $$
Thus (2) follows from this and Proposition \ref{litup}(2).

(3) Let $1 \leq i \leq t$. By (2) and Proposition \ref{litup}(7), $m_i = \mathrm{dim}_k\mathcal{L}(H)(i)$, so the bound follows Proposition \ref{litup}(4) and from the well-known Witt formula for the dimension of the $i$th graded summand of the free Lie algebra on $m_1$ generators of degree 1, \cite{Witt}, \cite{Se}.
\end{proof}

\begin{theorem}\label{numbers} Let $k$ and $H$ be as in Theorem \ref{Hopfnumbers}. Let each of $K$ and $L$ be right or left coideal subalgebras of $H$, of Gel'fand-Kirillov dimensions $m$ and $\ell$ respectively, with $L \subseteq K$.
Let $\sigma(K) = (e_1^{(r_1)}, \ldots , e_s^{(r_s)})$ and $\sigma(L) = (f_1^{(q_1)}, \ldots ,f_p^{(q_p)})$.
\begin{enumerate}
\item $ m = \sum_i r_i \geq \sum_j q_j = \ell.$
\item $\ell = m$ if and only if $L = K$.
\item $\sigma (L)$ is a sub-multiset of $\sigma (K)$. That is,
$$ h_{\mathrm{gr}L} (t) |h_{\mathrm{gr}K} (t).$$
Equality holds (of multisets and of Hilbert polynomials) if and only if $L=K$.
\item If $K \neq k$ then $e_1 = 1$. Similarly, of course, for $L$.

\end{enumerate}
\end{theorem}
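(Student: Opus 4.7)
The plan is to reduce each of the four assertions to results already established earlier in the paper, pivoting on Lemma~\ref{Tissiglan} and the purely commutative Lemma~\ref{hilbkey}.

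By Lemma~\ref{Tissiglan}, $\mathrm{gr}K$ and $\mathrm{gr}L$ are graded polynomial algebras whose number of polynomial generators equals their Gel'fand--Kirillov dimension, so $m = \sum_i r_i$ and $\ell = \sum_j q_j$ hold by the very definition of signature (Definition~\ref{sigdef}). The heart of the argument is to produce a single set of homogeneous polynomial generators of $\mathrm{gr}H$ that simultaneously exhibits $\mathrm{gr}L$ and $\mathrm{gr}K$ as sub-polynomial algebras. Lemma~\ref{Tissiglan} additionally tells us that $\mathrm{gr}H$ is polynomial over each of $\mathrm{gr}L$ and $\mathrm{gr}K$, so the chain $\mathrm{gr}L \subseteq \mathrm{gr}K \subseteq \mathrm{gr}H$ satisfies the hypotheses of Lemma~\ref{hilbkey}(2); that lemma then produces homogeneous $u_1,\ldots,u_n$ with $\mathrm{gr}L = k[u_1,\ldots,u_\ell]$, $\mathrm{gr}K = k[u_1,\ldots,u_m]$, and $\mathrm{gr}H = k[u_1,\ldots,u_n]$. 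This immediately delivers the inequality $\ell \le m$ of part~(1), exhibits $\sigma(L)$ as a sub-multiset of $\sigma(K)$, and gives the divisibility of Hilbert polynomials asserted in~(3), via the product formula $h_{\mathrm{gr}K}(t) = \prod_{i=1}^{m}(1-t^{\deg u_i})^{-1}$.

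The forward implication of part~(2) is trivial; the converse is precisely Proposition~\ref{chain lemma}. The equality clause of~(3) then follows at once, since equality of the multisets forces $\ell = m$ and hence $L = K$ by~(2). For part~(4), if $K \neq k$ then $K$ has positive Gel'fand--Kirillov dimension, so Lemma~\ref{primitivecoideal} (which applies verbatim in the right-handed case by the symmetric argument, or alternatively by passing to $S(K)$ via Lemma~\ref{leftright} and using that $S$ maps primitives to primitives) produces a primitive element $x \in K \setminus k$. Its image in $K(1) = K_1/K_0$ is a nonzero degree-one element of $\mathrm{gr}K$, and by Lemma~\ref{hilbkey}(1) it descends to a nonzero element of the generator space of $\mathrm{gr}K$ in degree $1$; hence the minimum of the signature satisfies $e_1 = 1$.

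The only mildly technical issue is verifying that Lemma~\ref{hilbkey}(2) genuinely applies --- that is, that $\mathrm{gr}H$ is a polynomial extension of \emph{both} $\mathrm{gr}L$ and $\mathrm{gr}K$ simultaneously --- and that Lemma~\ref{primitivecoideal} is available for right coideal subalgebras. Both points are handled by Lemma~\ref{Tissiglan} together with Lemma~\ref{leftright}; once they are in place, the theorem drops out essentially mechanically.
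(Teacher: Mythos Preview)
Your proof is correct and follows essentially the same route as the paper's: both reduce everything to Lemma~\ref{Tissiglan}, Lemma~\ref{hilbkey}, Proposition~\ref{chain lemma} (equivalently Theorem~\ref{flag}(3)), and Lemma~\ref{primitivecoideal}. The only cosmetic difference is that you unpack Lemma~\ref{hilbkey}(2) explicitly to produce a common chain of generators, whereas the paper simply cites Lemma~\ref{hilbkey}(4), and you invoke Proposition~\ref{chain lemma} for part~(2) where the paper goes back to its source, Theorem~\ref{flag}(3) plus Lemma~\ref{Tissiglan}; these are equivalent.
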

\begin{proof}(1) Immediate from Lemma \ref{Tissiglan} and the definition of the signature, Definition \ref{sigdef}(4).

(2) This is a consequence of the corresponding result when $H$ is commutative, Theorem \ref{flag}(4), together with Lemma \ref{Tissiglan}.

(3) Immediate from the defition and from Lemma \ref{hilbkey}(4).

(4) This is Lemma \ref{primitivecoideal}.

\end{proof}

\begin{remarks} (1) One might expect that Theorem \ref{Hopfnumbers}(1) applies more generally, namely to quantum homogeneous spaces rather than just to Hopf algebras, especially in the light of Theorem \ref{numbers}(4). But this is not the case, as is illustrated by $H = B(\lambda)$, see Example \ref{lambdasig}(4).

(2) Similarly, the No-Gaps Theorem \ref{Hopfnumbers}(3) does \emph{not} extend to quantum homogeneous spaces. This is shown by the example below.
\end{remarks}

\begin{example}\label{gaps} \textbf{ A quantum homogeneous space with signature $(1^{(2)},3)$.} The example is one of the families listed in the classification of connected Hopf algebras of Gel'fand-Kirillov dimension 4, \cite[\textrm{Example 4.5}]{WZZ4}. Let $a,b,\lambda_{1},\lambda_{2}\in k$, and let $E$ be the $k$-algebra generated by $X, Y, Z, W$, subject to the following relations:
$$\begin{aligned}
\; [Y,X]&=[Z,Y]= 0, \; [Z,X]=X, \; [W,X]=aX,\; \\
  [W,Y]&=bX, \; [W,Z]=aZ-W+\lambda_{1}X+\lambda_{2}Y.
\end{aligned}$$
It is shown in \cite{WZZ4} that there is a Hopf algebra structure on $E$ such that $E$ is connected with $\mathrm{GKdim}E = 4$. Namely, one defines $X,Y,Z, W \in \mathrm{ker}\epsilon$, and $\Delta:E\rightarrow E\otimes E$ is fixed by  setting $X, Y \in P(E)$ and
$$\begin{aligned}
\Delta(Z)=1\otimes Z+X\otimes Y&-Y\otimes X+Z\otimes 1, \\
\Delta(W)=1\otimes W+W\otimes 1+ & Z\otimes X-X\otimes Z+X\otimes XY+XY\otimes X. \\
\end{aligned} $$
In \cite[Proposition 4.8]{WZZ4} it is shown that
$$ \sigma (E) = (1^{(2)},2,3);$$
indeed one can see from the definition of $\Delta$ that $Z \in E_2 \setminus E_1$ and $W \in E_3 \setminus E_2$.
Define $T=k\langle X,Y, W-XZ \rangle$ and confirm easily that
$$T=k[X,Y][(W-XZ); \partial],$$
where $\partial(X)=aX-X^{2}$ and $\partial(Y)=bX$. Note that $Z\notin T$ and that
$$\begin{aligned}
\Delta(W-XZ)&=1\otimes (W-XZ)+(W-XZ)\otimes 1\\
&+2(XY\otimes X)+2(X\otimes Z)+X^{2}\otimes Y-Y\otimes X^{2} \\
&\in T\otimes E.
\end{aligned} $$
Thus $T$ is a right coideal subalgebra of $E$, with
$$ \sigma (T) = (1^{(2)},3).$$
\end{example}

\section{Questions and Discussion}\label{open}

Some questions concerning connected Hopf algebras are listed in the survey article \cite{Pal}. We don't repeat those questions here, focussing instead on the possible role of quantum homogeneous homogeneous spaces in the study of these Hopf algebras. As elsewhere in this paper, $k$ is algebraically closed of characteristic 0.

\subsection{Classification of algebras}\label{class1}

\begin{question}\label{q1} Let $T$ be a $k$-algebra with a finite dimensional filtration $\mathcal{F} = \{T_i : i \geq 0\}$ with $T_0 = k$, such that $\mathrm{gr}_{\mathcal{F}}T$ is a commutative polynomial algebra in finitely many variables. What conditions on $T$  and $\mathcal{F}$ are required to ensure that $T$ is a quantum homogeneous space of a connected Hopf algebra with $\mathrm{GKdim}H < \infty$, with $\mathcal{F}$ the coradical filtration of $T$?
\end{question}

This question appears to be difficult. There are some easy observations to be made: some numerical constraints on $T$ can be read off from Theorem \ref{numbers}; and $T$ has to admit a 1-dimensional representation, the restriction of the counit. If the additional hypothesis is imposed, that $\mathrm{gr}_{\mathcal{F}}T$ is generated by finitely many elements of degree 1, then $T_1$ is a Lie algebra with respect to the bracket $[a,b] = ab - ba$. Hence, by the universal property of enveloping algebras, $T$ is an epimorphic image of $U(T_1)$ (assuming only $\mathrm{gr}T$ affine commutative, not necessarily a polynomial algebra). This observation is due to Duflo, \cite{Du}, \cite[Proposition 8.4.3]{MR}. Such algebras are called \emph{almost commutative} in \cite{MR}. But the Jordan plane is a quantum homogeneous space of a connected Hopf algebra, by Proposition \ref{Blambda}. Since the Jordan plane is shown in \cite[Proposition 14.3.9]{MR} \emph{not} to be almost commutative, it follows that the class of $k$-algebras defined by Question \ref{q1} is not a subclass of the class of almost commutative algebras.

One can refine Question \ref{q1} in various ways. For example, one can ask what is needed so that $T$ is a connected Hopf algebra, not just a quantum homogeneous space. In particular, we asked in \cite[Question L]{Pal} whether every such $T$ is the enveloping algebra of a finite dimensional Lie algebra. The answer to this is "No", as we shall demonstrate in \cite{BGZ}.

In a second direction, one can restrict the size of $T$ in Question \ref{q1} and ask for a classification. As noted in \ref{tiny}, if $\mathrm{GKdim}T \leq 1$, then $T$ is either $k$ or $k[x]$, with $x$ primitive. Beyond dimension 1, the question is open:

\begin{question}\label{q2} What are the quantum homogeneous spaces $T$ with $\mathrm{GKdim}T = 2$ in connected Hopf $k$-algebras $H$ of finite Gel'fand-Kirillov dimension?
\end{question}

If this is too easy, one can consider the same question for $H$ a Hopf domain of finite Gel'fand-Kirillov dimension, not necessarily connected. Note that the corresponding question for $T= H$  of dimension 2 has been answered in \cite{GZ}.

\subsection{Classification via the signature}\label{qsig}
The signature may offer a useful way to organise the possible $H$ and the possible $T$. So, for example, in the sense that all finite dimensional Lie $k$-algebras are ``known", we can describe all the connected Hopf $k$-algebras with $H$ with $\sigma(H) = (1^{(n)})$ - namely, they are the enveloping algebras $H = U(\mathfrak{g})$ with $\mathfrak{g} = P(H)$. And the same holds for the quantum homogeneous spaces $T$ with this signature, by $\S$\ref{cocommutative}. Next up might be the \emph{primitively thick} connected Hopf algebras $H$, these being the ones with signature $\sigma (H) = (1^{(n-1)},2)$. These are completely described, along with their coideal subalgebras, in \cite{BGZ}. There are various options as to what to consider next; for example, one can aim to classify the \emph{primitively thin} algebras, those at the opposite extreme from the thick ones. Namely:

\begin{question}\label{qthin}Classify the connected Hopf $k$-algebras $H$ with $\sigma (H) = (1^{(2)}, \ldots )$. Do the same for the quantum homogeneous spaces $T$ with $\sigma (T) = (1, \ldots )$.
\end{question}

Note that the second part of Question \ref{qthin} incorporates Question \ref{q2}.

\subsection{Quantisations of unipotent groups} \label{qunipotent}

Recall that if $H$ is connected Hopf $k$-algebra of finite Gel'fand-Kirillov dimension, then $\mathrm{gr}H$ is the coordinate ring of a unipotent $k$-group $U$. Naturally, one should consider reversing this passage to the associated graded algebra of $H$:

\begin{question}\label{q3} Which unipotent $k$-groups $U$ admit non-trivial ``lifts"? That is, for which such $U$ does there exist a \emph{noncommutative} connected Hopf algebra $H$ with $\mathrm{gr}H \cong \mathcal{O}(U)$?
\end{question}

Of course the answer to Question \ref{q3} is known when $U$ is abelian: if $U \cong (k,+)^n$ for any $n>1$, one can take $H = U(\mathfrak{g})$ where $\mathfrak{g}$ is any non-abelian Lie algebra of dimension $n$. So perhaps a first step on the route towards answering Question \ref{q3} might be:

\begin{question}\label{q4} Is there a unipotent group $U$ with $\mathrm{dim}U > 1$ and with $\mathrm{Lie}U$ Carnot, which has no non-trivial lift?
\end{question}

\subsection{Complete flags of quantum homogeneous spaces}\label{qflag}

If $H$ is a \emph{commutative} connected Hopf $k$-algebra of finite Gel'fand-Kirillov dimension $n$, then, as we noted in Theorem \ref{flag}, there is chain of $n+1$ coideal subalgebras (in fact Hopf subalgebras) from $k$ to $H$. The same is not always true in the \emph{cocommutative} case, where (by an easy argument making use of \cite[$\S$3.1, Examples (iii),(iv)]{OHagan}), such a flag exists if and only if $\mathfrak{g}$ has solvable radical $\mathfrak{r}$ with $\mathfrak{g}/\mathfrak{r}$ isomorphic to a finite direct sum of copies of $\mathfrak{sl}(2,k)$. Bearing these cases in mind and aiming to develop a generalisation of the solvable radical in the connected Hopf setting, one might propose the following:

\begin{question}\label{q5} (i) Is there a good structure theory for connected Hopf $k$-algebras $H$ which possess a complete flag of coideal subalgebras $K_i$,
$$ k = K_0 \subset K_1 \subset \cdots \subset K_n = H, $$
with $K_i^+H$ a Hopf ideal of $H$?

(ii) If $H$ is \emph{any} connected Hopf $k$-algebra of finite Gel'fand-Kirillov dimension, does $H$ have a maximal $\mathrm{ad}H$-normal Hopf subalgebra $R$ with the property (i)?

(iii) Given (ii), what can be said about the Hopf algebra $H/R^+H$?

(iv) How does the class of Hopf algebras in (i) compare with the IHOEs studied in \cite{OHagan}?
\end{question}


\begin{thebibliography}{10}

\bibitem{Sch} N. Andruskiewitsch, H. J Schneider, Finite quantum groups and Cartan matrices, \emph{Adv. Math.} 154 (2000), 1-45.

\bibitem{And} N. Andruskiewitsch, H. J Schneider, Pointed Hopf algebras, in \emph{Recent Developments in Hopf Algebra Theory}, MSRI Publications 43 (2002), CUP.

\bibitem{Bjork} J.-E. Bjork, The Auslander condition on Noetherian rings  S$\acute{\mathrm{e}}$minaire d'Alg$\grave{\mathrm{e}}$bre Paul Dubreil et Marie-Paul Malliavin, 39$\grave{\mathrm{e}}$me Ann$\acute{\mathrm{e}}$e (Paris, 1987/1988),  137�173, Lecture Notes in Math., 1404, Springer, Berlin, 1989.

\bibitem{Pal} K.A. Brown and P. Gilmartin, Hopf algebras under finiteness conditions, Palestine J. Math. 3 (2014), 356�365.

\bibitem{BGZ} K.A. Brown, P. Gilmartin and J.J. Zhang, Connected (graded) Hopf algebras, in preparation.

\bibitem{BrownGoodearl} K.A. Brown, K.R. Goodearl, Lectures on Algebraic Quantum Groups, Birkh$\ddot{\mathrm{a}}$user, 2002.


\bibitem{OHagan} K.A. Brown, S. O'Hagan, J.J. Zhang and G. Zhuang, Connected Hopf algebras and iterated Ore extensions, \emph{J. Pure and Appl. Algebra} 219 (2015), 2405–2433.

\bibitem{KZhang} K.A. Brown, J. Zhang, Dualising complexes and twisted Hochschild
(co)homology for noetherian Hopf algebras \textit{J. Algebra} 320 (2008), 1814-1850.

\bibitem{Corn} Y. Cornulier, Gradings on Lie algebras, systolic growth, and cohopfian properties of nilpotent groups, arXiv:1403.5295v4.

\bibitem{DG} M. Demazure, P. Gabriel, Groupes Alg$\acute{\mathrm{e}}$briques I. North Holland, Amsterdam, 1970.

\bibitem{Du} M. Duflo, Certaines alg$\grave{\mathrm{e}}$bres de type fini sont des alg$\grave{\mathrm{e}}$bres de Jacobson, J. Algebra 27 (1973), 358-365.

\bibitem{GoodearlWarfield} K.R. Goodearl, R.B. Warfield, An Introduction to Noncommuative Noetherian Rings, London Math. Soc. Student Texts 61, Cambridge University Press, 2004.

\bibitem{GZ} K.R. Goodearl, J.J. Zhang, Noetherian Hopf algebra domains of Gel'fand-Kirillov dimension two, \emph{J. Algebra} 324 (2010), 3131-3168.

\bibitem{G} K.R. Goodearl,  Noetherian Hopf algebras, \emph{Glasgow Math. J} 55 (2013), 75-87.

\bibitem{VO} L. Huishi, F. van Oystaeyen, Zariskian Filtrations, Kluwer, 1996.

\bibitem{Humphreys} J. E. Humphreys, Linear Algebraic Groups, Graduate Texts in Mathematics,
Springer, Berlin, 1975.


\bibitem{Ko} M. Koppinen, Coideal subalgebras in Hopf algebras: freeness, integrals, smash
products, Communications in Algebra 21 (1993), 427-444.


\bibitem{Uli} U. Kraehmer, On the Hochschild (co)homology of quantum homogeneous spaces, \emph{Israel J. Math.} 189 (2012), 237-266.



\bibitem{KL} G. Krause, T.H. Lenagan, Growth of Alegbras and Gelfand-Kirillov Dimension, (revised edition), Graduate Studies in Maths. 22, Amer. Math. Soc. 2000.

\bibitem{Lazard} M. Lazard, Sur la nilpotence de certains groupes alg$\grave{\mathrm{e}}$briques, \emph{C.R Acad. Sci.} Ser. 1 Math. 41 (1955) 1687-1689.

\bibitem{Lev} T. Levasseur, Krull dimension of the enveloping algebra of a semisimple Lie algebra, \textit{Proc. Amer. Math. Soc.} 130 (2002), 3519-3523.

\bibitem{LiuWang}  L.-Y. Liu, S. Wang, Q.-S. Wu, Twisted Calabi Yau Property of Ore extensions, \emph{J. Noncomm. Geometry} 8 (2014), 587-609.

\bibitem{LiuWu} L.-Y. Liu, Q.-S. Wu, Twisted Calabi Yau Property of right coideal subalgebras of quantised enveloping algebras, \textit{J. Algebra} 399 (2014), 1073-1085.

\bibitem{Rigid}L.-Y. Liu, Q.-S. Wu,  Rigid Dualizing Complexes of quantum homogeneous spaces, \textit{J. Algebra} 353 (2012), 121-141.

\bibitem{WuZhang}D.-M. Lu, Q.-S. Wu, J.J. Zhang, Homological integral of Hopf Algebras, \textit{Trans. Amer. Math. Soc.} 359 (2007), 4945-4975.

\bibitem{Masuoka} A. Masuoka, On Hopf Algebras with Cocommutative Coradicals,
\textit{J. Algebra} 144 (1991), 451-466.

\bibitem{MR} J.C M$^{c}$Connell, J.C. Robson, Noncommutative Noetherian Rings, John Wiley and Sons, 1988.

\bibitem{Mont} S. Montogomery, Hopf Algebras and their Actions on Rings, \emph{CBMS Regional Conference Series in Mathematics} 82, Amer. Math. Soc., Providence R1, 1993.

\bibitem{New} K. Newman, A correspondence between bi-ideals and sub-Hopf algebras in cocommutative Hopf algebras, J. Algebra 36 (1975), 1�15.


\bibitem{Rosenlicht} M. Rosenlicht, Questions of rationality for solvable algebraic groups over nonperfect fields, Ann. Mat. Pure Appl., 61 (1963) 97-120.

\bibitem{Schn} H. J Schneider, Principal homogeneous spaces for arbitrary Hopf algebras, \emph{Israel J. Math.} 72 (1990), 167-195.

\bibitem{Se} J.-P. Serre, Lie Algebras and Lie Groups, Benjamin, New York, 1975.

\bibitem{Sweedler} M.E. Sweedler, Hopf Algebras, W. A. Benjamin, New York, 1969.

\bibitem{Tak} M. Takeuchi, A correspondence between Hopf ideals and sub-Hopf algebras,
Manuscripta Math. 7 (1972), 251�270.

\bibitem{Takeuchi} M. Takeuchi, Relative Hopf Modules - Equivalences and Freeness Critera,
\textit{J. Algebra} 60, (1979), 452-471.


\bibitem{Tauvel} P. Tauvel, R.T. Yu, Lie Algebras and Algebraic Groups, Springer Monographs in Mathematics 2005.


\bibitem{WZZ4} D.G. Wang, J.J. Zhang and G. Zhuang, Connected Hopf algebras of Gelfand-Kirillov dimension 4, arXiv 1302.2270v1.

\bibitem{Wa} W.C. Waterhouse, Introduction to Affine Group Schemes, Graduate Texts in Mathematics
66, Springer, Berlin, 1979.

\bibitem{Witt} E. Witt, Treue darstellungLiescher ringe, J. reine angew. Math. 177 (1937), 152-160.

\bibitem{Zhuang} G. Zhuang, Properties of pointed and connected Hopf algebras of finite Gelfand-Kirillov dimension, J.London Math. Soc. 87 (2013), 877-898; arXiv:1202.4121v2.

\end{thebibliography}
\end{document}